
\documentclass[12pt,oneside]{report}

\usepackage{setspace}


\usepackage{amsthm}

\usepackage{amssymb}   

\usepackage{amsmath}

\usepackage{mathrsfs}

\usepackage{stmaryrd}

\usepackage{enumitem}

\usepackage{MnSymbol,wasysym}


\usepackage[all]{xy}

\usepackage{textcomp}

\usepackage{amsbsy}


\usepackage{datetime}
\renewcommand{\dateseparator}{-}
\renewcommand{\today}{\the\year \dateseparator \twodigit\month
\dateseparator \twodigit\day}

\usepackage
[pdfauthor={Philip Douglas Tynan},
 pdftitle={Equivariant Weiss Calculus and Loop Spaces of Stiefel Manifolds},
 bookmarks=false]
{hyperref}

\title{Equivariant Weiss Calculus and Loop Spaces of Stiefel Manifolds} 
\author{Philip Douglas~Tynan}




\addtolength{\oddsidemargin}{-0.25 in} 
\addtolength{\evensidemargin}{-0.75 in} 
\addtolength{\textwidth}{1 in}

\addtolength{\topmargin}{-0.45in} 
\addtolength{\textheight}{0.6in}



\newtheorem{thm}{Theorem}[section]

\newtheorem{thm-defn}[thm]{Theorem/Definition}
\newtheorem{lem}[thm]{Lemma}
\newtheorem{prop}[thm]{Proposition}
\newtheorem{cor}[thm]{Corollary}
\newtheorem{conj}[thm]{Conjecture}

\theoremstyle{definition}
\newtheorem{defn}[thm]{Definition}

\newtheorem{eg}[thm]{Example}

\theoremstyle{remark}
\newtheorem{rmk}[thm]{Remark}

\numberwithin{equation}{section}




\newcommand{\fr}{{\mathfrak r}}

\newcommand{\cC}{{\mathcal C}}

\newcommand{\cE}{{\mathcal E}}

\newcommand{\cG}{{\mathcal G}}

\newcommand{\cJ}{{\mathcal J}}

\newcommand{\cM}{{\mathcal M}}
\newcommand{\cN}{{\mathcal N}}

\newcommand{\cU}{{\mathcal U}}

\newcommand{\CC}{{\mathbb C}}
\newcommand{\CP}{\mathbb{C} P}

\newcommand{\HH}{{\mathbb H}}

\newcommand{\PP}{{\mathbb P}}

\newcommand{\RR}{{\mathbb R}}
\newcommand{\RP}{\mathbb{R} P}
\newcommand{\BS}{{\mathbb S}}

\newcommand{\ZZ}{{\mathbb Z}}


\DeclareMathOperator{\im}{im}
\DeclareMathOperator{\coker}{coker}
\DeclareMathOperator{\id}{id}

\DeclareMathOperator{\mor}{mor}
\DeclareMathOperator{\nat}{nat}
\DeclareMathOperator*{\colim}{colim}
\DeclareMathOperator*{\holim}{holim}
\DeclareMathOperator*{\hocolim}{hocolim}
\DeclareMathOperator{\hofiber}{hofiber}
\DeclareMathOperator{\hocofiber}{hocofiber}

\DeclareMathOperator{\res}{res}
\DeclareMathOperator{\ind}{ind}
\DeclareMathOperator{\coind}{coind}
\DeclareMathOperator{\given}{\, | \,}

\DeclareMathOperator{\End}{End}

\DeclareMathOperator{\Sym}{Sym}

\DeclareMathOperator{\Gr}{Gr}
\DeclareMathOperator{\Map}{Map}

\DeclareMathOperator{\Th}{\mathbf \Theta}

\DeclareMathOperator{\Or}{O}
\DeclareMathOperator{\SO}{SO}
\DeclareMathOperator{\U}{U}
\DeclareMathOperator{\SU}{SU}

\DeclareMathOperator{\Sp}{Sp}
\DeclareMathOperator{\GL}{GL}

\DeclareMathOperator{\R}{R}

\DeclareMathOperator{\Top}{\underline{Top}}


\begin{document}

\pagenumbering{roman}


\thispagestyle{empty}

\vspace*{\fill}

\begin{center}
Equivariant Weiss Calculus and Loop Spaces of Stiefel Manifolds\\
\vspace{0.2in}
A dissertation presented\\
\vspace{0.2in}
by\\
\vspace{0.2in}
Philip Douglas Tynan\\
\vspace{0.2in}
to\\
\vspace{0.2in}
The Department of Mathematics\\
\vspace{0.2in}
in partial fulfillment of the requirements\\
for the degree of\\
Doctor of Philosophy\\
in the subject of\\
Mathematics\\
\vspace{0.2in}
Harvard University\\
Cambridge, Massachusetts\\
\vspace{0.2in}April 2016\\
\end{center}

\vspace*{\fill}

\pagebreak






\thispagestyle{empty} 

\vspace*{\fill}

\begin{center}
\copyright \, 2016 -- Philip Douglas Tynan \\
All rights reserved.
\end{center}

\vspace*{\fill}

\pagebreak




\doublespacing


\noindent Dissertation Advisor: Professor Hopkins \hfill
Philip Douglas Tynan

\vspace{0.5in}

\centerline{Equivariant Weiss Calculus and Loop Spaces of Stiefel Manifolds}

\vspace{0.8in}

\centerline{Abstract}

\vspace{0.3in}

In the mid 1980s, Steve Mitchell and Bill Richter produced a filtration of the Stiefel manifolds $\Or(V; W)$ and $\U(V; W)$ of orthogonal and unitary, respectively, maps $V \to V \oplus W$ stably split as a wedge sum of Thom spaces defined over Grassmanians. Additionally, they produced a similar filtration for loops on $\SU(V)$, with a similar splitting. A few years later, Michael Crabb made explicit the equivariance of the Stiefel manifold splittings and conjectured that the splitting of the loop space was equivariant as well. However, it has long been unknown whether the loop space of the real Stiefel manifold (or even the special case of $\Omega SO_n$) has a similar splitting.

Here, inspired by the work of Greg Arone that made use of Weiss' orthogonal calculus to generalize the results of Mitchell and Richter, we obtain an $\ZZ / 2 \ZZ$-equivariant splitting theorem using an equivariant version of Weiss calculus. In particular, we show that $\Omega \U(V; W)$ has an equivariant stable splitting when $\dim W > 0$. By considering the (geometric) fixed points of this loop space, we also obtain, as a corollary, a stable splitting of the space $\Omega ( \U(V; W), \Or(V_\RR; W_\RR) )$ of paths in $\U(V; W)$ from $I$ to a point of $\Or(V_\RR; W_\RR)$ as well. In particular, by setting $W = \CC$, this gives us a stable splitting of $\Omega (\SU_n / \SO_n)$.

\pagebreak





\tableofcontents

\pagebreak





\section*{Acknowledgements}

First and foremost, I would like to thank my dissertation advisor Mike Hopkins for his guidance and support. 

I would also like to thank the students and postdoctoral fellows at Harvard University for numerous helpful conversations. In particular, thank you to Omar Antolín Camarena, Gijs Heuts, Charmaine Sia, Jeremy Hahn, Krishanu Sankar, Danny Shi, Lukas Brantner, Akhil Matthew, Alex Perry, Peter Smillie, Francesco Cavazzani, Gabriel Bujokas, Nasko Atanasov, Simon Schieder, Justin Campbell, Anand Patel, Tom Lovering, Spencer Liang, Lucia Mocz, Adam Al-natsheh, Michael Fountaine, Aaron Landesman, Nat Mayer, Aaron Slipper, Mboyo Esole, and Hiro Tanaka!

For agreeing to be on my defense committee and taking the time to read this paper, I would like to thank my other readers Haynes Miller and Jacob Lurie.

I would not have been able to pursue a PhD in mathematics without the support of the faculty at MIT, my undergraduate alma mater. In particular, I would like to thank Haynes Miller, Mark Behrens, David Jerison, Paul Seidel, Katrin Wehrheim, Hamid Hezari, David Vogan, and Kiran Kedlaya.

The origins of my research were based on the work of Steve Mitchell, Bill Richter, Haynes Miller, Michael Crabb, Michael Weiss, and Greg Arone; so I would like to extend my gratitude to them as well, for making my thesis possible.

My family has been incredibly supportive of my academic pursuits, and I would like to take the time to give thanks to them.

Last but not least, I would like to thank Leah Brunetto for being a constant source of joy and inspiration.

\pagebreak




\pagenumbering{arabic}

\chapter{Introduction}

We consider the category $\cJ$ whose objects are finite dimensional real vector spaces and whose morphisms are the complex inner product preserving linear transformations between their complexifications, taken as a $G$ space where $G = \ZZ / 2 \ZZ$ acts by complex conjugation. Throughout this paper, we shall follow a similar strategy to that used in \cite{Arone}; in fact if we forget the $G$ action, we are working with the same spaces and spectra considered there. We will have to adapt many of the functor calculus tools to work in this equivariant setting, but luckily for us, most of the natural analogues that we will hold with minimal modifications.

Let $\cE$ be the category of functors $\cJ \to \Top$. Our criterion for polynomial functors in the $G$-equivariant setting will be reminiscent of that in section 6 of \cite{Weiss}. We will say that $F \in \cE$ is polynomial of degree $\le n$ if the natural map $F(V) \to T_n F(V) := \holim_{0 \ne U \subset \RR^{n + 1}} F(V \oplus U)$ is a $G$-equivalence. We find that the homotopy fiber of this map gives us the $(n+1)$th derivative $F^{(n + 1)}$, so unsurprisingly, when $F$ is polynomial of degree $\le n$, the functor $F^{(n + 1)}$ is $G$-contractible for every object in $\cJ$.

As one may expect, given any $F \in \cE$, we can use the natural transformation $T_n$, to construct a degree $\le n$ polynomial functor from $F$, which we shall refer to as its degree $n$ approximation. In particular, we will define $P_n F$ to be the homotopy colimit of the diagram $F \to T_n F \to T_n^2 F \to \cdots$. In some cases, this will allow us to split off parts of the space $F(V)$, for $V \in \cJ$, in the same way that the Taylor expansion in calculus let us split off the degree $n$ and smaller terms of an analytic function from the remaining ones. Following the same analogy, we may define a homogeneous polynomial of degree $n$ as one which is polynomial of degree $\le n$ and whose degree $n - 1$ approximation is $G$-contractible.

Much to our delight, the spectra associated to the derivatives of a homogeneous polynomial functor of degree $n$ are the same as those in the non-equivariant case, namely $G$-contractible aside from that of the $n$th derivative. Because the summands in the stable splitting are themselves homogeneous polynomial functors, this allows us to obtain a precise equivariant analogue of the splitting from Mitchell and Richter (and more generally, that from \cite{Arone}).

As is common in the study of loop groups and related homogeneous spaces, we replace $\Omega \U_{n, k}$ with a $\ZZ / 2$-homotopy equivalent space of algebraically defined loops, essentially using the strategy outlined in \cite{Pressley}. In particular, we replace our loop space with an infinite dimensional complex Grassmannian that is $\ZZ / 2$-equivalent. This allows us to define a filtration, like that from \cite{Crabb}, for which the associated graded pieces have a nice description, namely they become $G$-equivariant homogeneous polynomial functors after applying the functor $Q := \Omega^{\infty \rho} \Sigma^{\infty \rho}$ to them.

The $G$-equivariant splitting theorem that we obtain states that
$$F(V) \simeq \bigvee_{n > 0} (F_n(V) / F_{n - 1}(V)$$
for functors $F$ that have such a $G$-filtration $\{ F_n \}$ such that $F_0 = *$, and the homotopy cofiber of $Q F_{n - 1} (V) \to Q F_n(V)$ is a homogeneous polynomial functor. This means that the splitting of $\Omega \U(V; W)$ in \cite{Crabb} and \cite{Arone} is $\ZZ / 2$-equivariant, so in particular, we have a stable splitting of the $\ZZ / 2$ fixed points.

Moreover, we obtain a stable splitting of the geometric fixed points of $\Omega \U(V; W)$, which are given by the space of paths in $\U(V; W)$ that end at an element of $\Or(V_\RR; W_RR)$, denoted by $\Omega (\U(V; W), \Or(V_\RR; W_\RR)$. In particular, the real points stably split as the wedge sum of the real points of the summands $S_n^\RR(V)^{\hom(\tau, W_\RR)}$ appearing in the splitting of $\Omega \U(V; W)$. When $W = \CC$, we can identify $\U(V; \CC)$ with $\SU(V \oplus \CC)$. Because this has the structure of a group with $\Or(V_\RR; \RR) \cong \SO(V_\RR \oplus \RR)$ as a subgroup, this path space can be rewritten as $\Omega (\SU(V \oplus \CC) / \SO(V_\RR \oplus \RR))$.

\chapter{Equivariant Unitary Calculus}

\section{Functors on the Category $\cJ_n$}

Throughout this paper we will take $G = \ZZ / 2 \ZZ$. Here, we develop a $G$-equivariant version of the orthogonal calculus from \cite{Weiss}. We will use the notation $\cU$ for $\RR^\infty$ with the standard inner product, and regard all finite dimensional vector spaces as subspaces of $\cU$, inheriting its inner product. We will take the action of $G$ on $\CC$ to be that of complex conjugation, thereby making the complexification of any real vector space into a $G$-space. Note that this obviously gives us an identification of $\CC$ with the regular representation $\rho$ of $G$.

\begin{defn} For real vector spaces $V, W \subset \mathcal U$, let $\mor(V, W)$ be the set of complex linear transformations between the complexifications $V_\CC, W_\CC$, that preserve the induced Hermitian inner product. We will take $\cJ$ to be the category whose objects are finite dimensional subspaces of $\mathcal U$, with the set of morphisms from $V$ to $W$ given by the $G$-set $\mor(V, W)$. Note that the space of $G$-fixed points of $\mor(V, W)$ is simply the space of inner product preserving morphisms from $V$ to $W$, which we shall denote by $\mor^G(V, W)$.
\end{defn}

We will be interested in studying $G$-enriched functors $F: \cJ \to G-\Top$, the category of which we shall denote by $\cE$.

We can additionally define a vector bundle on the $G$-sets $\mor(V, W)$ in the following way.

\begin{defn}
Given $V \in \cJ$, we will denote $\CC^n \otimes_\CC V_\CC$ by $n V_\CC$.
For each nonnegative integer $n$, let $\gamma_n(V, W)$ be the bundle over $\mor(V, W)$ whose fiber over $\varphi \in \mor(V, W)$ is $n \cdot \coker(\varphi)$, glued together in the natural way. Note that we can naturally identify the cokernel of $\varphi$ with the orthogonal complement of $\varphi (V_\CC)$ in $W_\CC$. Furthermore if $v \in (\gamma_n(V, W))_\varphi = n \coker(\varphi)$, then $\overline v \in n \coker(\overline \varphi) = n \coker(g \varphi) = (\gamma_n(V, W))_{g \varphi}$, where $g$ is the generator of $G$, so this bundle inherits a $G$-space structure as well, where the $G$-fixed points are $\{ (\varphi, v) \ | \ \varphi \in \mor^G(V, W), v \in n \coker(\varphi) \subseteq n W \}$.
Define $\mor_n(V, W)$ as the Thom space of $\gamma_n(V, W)$, and $\mor_n^G(V, W)$ to be its $G$-fixed point space.
\end{defn}

Note that the space $\mor_n^G(V, W)$ is precisely what is referred to as $\mor_n(V, W)$ in \cite{Weiss} when the purely real analogue is considered.

The composition law $\mor(V, W) \times \mor(U, V) \to \mor(U, W)$ extends naturally to a vector bundle map $\gamma_n(V, W) \times \gamma(U, V) \to \gamma_n(U, W)$ given by $( (\varphi, w), (\psi, v) ) \mapsto ( \varphi \psi, w + \varphi_*(v) )$. On Thom spaces, we get a composition $\mor_n(V, W) \wedge \mor_n(U, V) \to \mor_n(U, W)$.

Letting $\cJ_n$ be the category with the same objects as $\cJ$ and with the set of morphisms from $V$ to $W$ given by $\mor_n(V, W)$, we obtain a pointed topological category. In fact, we obtain a pointed $G$-equivariant topological category, as we inherit the $G$-action from $\cJ$. Note that $\cJ_0$ is simply the pointed version of $\cJ$, obtained by adding a disjoint basepoint to each morphism set.

\begin{defn}
For $m \ge 0$, we will denote by $\cE_m$ the category of $G$-enriched functors $E: \cJ_m \to G-\Top_*$.
\end{defn}

We have an alternative description of $\cE_m$ as an element $E \in \cE_0$ together with a natural transformation $\sigma: S^{m V_\CC} \wedge E(W) \to E(V \oplus W)$ of functors on $\cJ_0 \times \cJ_0$ (where $m \cdot V_\CC$ can be regarded as the fiber over the canonical inclusion $\iota: W_\CC \hookrightarrow V_\CC \oplus W_\CC$ in the bundle $\gamma_m(W, V \oplus W)$, so this map arises from restricting the domain of the composition map $\mor_m(W, V \oplus W) \wedge E(W) \to E(V \oplus W)$). Note that the transformation $\sigma$ also satisfies an associativity condition by the functoriality of $E$.

\begin{defn}
The map $\sigma$ has an adjoint $\sigma^{ad}: E(W) \to \Omega^{m V_\CC} E(V \oplus W)$. We say that an object $E \in \cE_m$ is stable if $\sigma^{ad}$ is a homotopy equivalence for all $G$-representations $V, W$.
\end{defn}

In light of this, $\cE_1$ is equivalent to the category of $G$-spectra, as they both specify a space for each $G$-representation, and the natural map is exactly the required structure map. Similarly, the elements of $\cE_m$ for $m > 1$ can also be thought of as $G$-spectrum $\Th E$ without all representations being assigned a $G$-space; we define $\Th E$ as $\Th_{k \rho} := E(k \cdot \RR)$. Furthermore, a stable element of $\cE_m$ can be thought of as an $\Omega$-G-spectrum (of multiplicity $m$).

For positive $m$, we have the diagram $$E(V) \xrightarrow{\simeq} \hocolim_k \Omega^{mk \rho} E(k \cdot \RR \oplus V) \xleftarrow{\simeq} \hocolim_k \Omega^{mk \rho} ( S^{mV_\CC} \wedge E(k \cdot \RR) ),$$

where the left map comes from a diagram of homotopy equivalences from $\sigma^{ad}$ and the right map is $\hocolim_k \Omega^{mk \rho} \sigma$. The object on the right is simply $\Omega^\infty ( S^{m V_\CC} \wedge \Th E )$, where $\Omega^\infty \Theta := \hocolim_k \Omega^{k \rho} \Theta_{k \rho}$.

Conversely, if $\Th$ is any $G$-spectrum (made from well-pointed spaces), then we can define a functor $E: \cJ_0 \to G-\Top_*$ by $E(V) = \Omega^\infty ( S^{m V_\CC} \wedge \Th )$, that comes equipped with a natural transformation $\sigma: S^{m V} \wedge E(W) \to E(V \oplus W)$, and is therefore an element of $\cE_m$. Thus, we have an equivalence between stable objects of $\cE_m$ and $G$-spectra. This will become very important to us as the components appearing in our main splitting theorem will be of this form.

The following proposition will be useful in defining the derivatives of elements of $\cE_m$ in the next section.

\begin{prop} Let $V, W \in \cJ$. The reduced mapping cone of the restricted composition map $\mor_n(V \oplus \RR, W) \wedge S^{n \rho} \to \mor_n(V, W)$ is $G$-homeomorphic to $\mor_{n + 1}(V, W)$. It should be noted that the non-equivariant version of this is used in \cite{Arone}.
\end{prop}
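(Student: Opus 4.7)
The plan is to exhibit an explicit $G$-equivariant homeomorphism by realizing $\pi: \mor(V \oplus \RR, W) \to \mor(V, W)$ (restriction along $\iota: V \hookrightarrow V \oplus \RR$) as the unit sphere bundle of the cokernel bundle $\xi := \coker$ over $\mor(V, W)$. An extension of $\psi$ to $\varphi: V_\CC \oplus \CC \to W_\CC$ is uniquely determined by the unit vector $\varphi(0, 1) \in S(\coker(\psi))$, and the tautological complex line bundle $L$ on this sphere bundle, whose fiber is $L_\varphi = \CC \cdot \varphi(0, 1)$, carries a canonical unit section $\varphi \mapsto \varphi(0,1)$. Since $\overline{\varphi(0,1)} = (g\varphi)(0, 1)$, this section yields a $G$-equivariant trivialization $L \cong \mor(V \oplus \RR, W) \times \rho$. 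The orthogonal decomposition $\coker(\varphi\iota) = L_\varphi \oplus \coker(\varphi)$ globalizes to an equivariant isomorphism $\pi^*\xi \cong L \oplus \gamma_1(V \oplus \RR, W)$, and hence $\pi^*(n\xi) \cong n\rho \oplus \gamma_n(V \oplus \RR, W)$ after trivializing.

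Next, I would invoke the standard Thom-space cofiber sequence for a sphere bundle, $\Thom(S(\xi); \pi^*\eta) \to \Thom(B; \eta) \to \Thom(B; \xi \oplus \eta)$. Setting $B = \mor(V, W)$, $\xi = \coker$, and $\eta = n\xi$, the identification above rewrites the first term as $\mor_n(V \oplus \RR, W) \wedge S^{n\rho}$, and unwinding the collapse map shows that it agrees with the restricted composition $c$ from the statement. This already yields the required cofiber sequence at the level of equivariant homotopy types.

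To upgrade this to a genuine $G$-homeomorphism, I would write down an explicit map out of the reduced mapping cone. On $\mor_n(V, W)$ take the inclusion $(\psi, w) \mapsto (\psi, w, 0) \in \mor_{n+1}(V, W)$. On a cone point $(\varphi, w, v, t)$ with $t \in [0, 1]$, send it to $(\varphi\iota,\, w + \varphi_*(v),\, \tan(\pi t/2) \cdot \varphi(0, 1)) \in \mor_{n+1}(V, W)$, with the basepoint attained as $t \to 1$ or as any of $w$, $v$, $\varphi$ approaches the basepoint of its respective Thom space. This definition manifestly agrees with $c$ at $t = 0$, is $G$-equivariant by the observations above, and is a bijection: given $(\psi, u_1, \ldots, u_{n+1}) \in \mor_{n+1}(V, W)$ with $u_{n+1}$ nonzero and of finite norm, the extension $\varphi$ is uniquely determined by $\varphi(0, 1) = u_{n+1}/|u_{n+1}|$, whereupon the decomposition $u_i = \lambda_i \varphi(0, 1) + w_i$ uniquely recovers $v = (\lambda_i)$ and $w = (w_i)$.

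Finally, both the mapping cone and $\mor_{n+1}(V, W)$ are compact Hausdorff, being Thom spaces (and mapping cones of such) over the compact Stiefel-type manifold $\mor(V, W)$, so any continuous $G$-equivariant bijection between them is automatically a homeomorphism. The main obstacle I expect is not conceptual but careful bookkeeping: verifying that the explicit formula extends continuously to the collapse locus $t = 1$, to the Thom-space basepoints, and to the gluing locus $t = 0$ (where it must match $c$), and confirming that each bundle identification, especially the trivialization of $L$, really is performed $G$-equivariantly rather than merely up to the $G$-action.
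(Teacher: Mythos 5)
Your proposal is correct, but it takes a more self-contained route than the paper does. The paper's own proof is essentially a citation: it invokes Proposition 1.2 of \cite{Weiss} for the non-equivariant real case, asserts that the same argument goes through verbatim for complex cokernels, and then deduces $G$-equivariance of the resulting homeomorphism indirectly from the $G$-equivariance of the restricted composition map and of the stabilization map $\mor_n(V,W) \to \mor_{n+1}(V,W)$. You instead reconstruct the homeomorphism explicitly: the identification of $\mor(V \oplus \RR, W)$ with the unit sphere bundle of the cokernel bundle, the $G$-equivariant trivialization of the tautological line $L_\varphi = \CC \cdot \varphi(0,1)$ (using that $(g\varphi)(0,1) = \overline{\varphi(0,1)}$, so the trivialization lands in $\rho$, not the trivial representation), the explicit formula sending a cone point $(\varphi, w, v, t)$ to $\bigl(\varphi\iota,\, w + \varphi_*(v),\, \tan(\pi t/2)\,\varphi(0,1)\bigr)$, and the compactness argument upgrading the continuous $G$-bijection to a $G$-homeomorphism — all of which checks out, including continuity at the collapse loci since $|w + \varphi_*(v)|^2 = |w|^2 + |v|^2$ forces the image to the Thom-space basepoint whenever $w$ or $v$ escapes to infinity. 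What your approach buys is precisely the point the paper treats most lightly: equivariance is verified directly on a displayed formula, rather than inferred for a homeomorphism whose construction is left inside \cite{Weiss}; the cost is length, and your preliminary Thom-space cofiber sequence for the sphere bundle is strictly speaking redundant motivation once the explicit map is in hand, since a cofiber sequence only gives the identification up to homotopy while the statement demands an actual $G$-homeomorphism.
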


\begin{proof} The non-equivariant real version of this is established as Proposition 1.2 in \cite{Weiss}, and it is clear that the proof works exactly the same way in the complex case, so the mapping cone will be homeomorphic to $\mor_{n + 1}(V, W)$.

That the restricted composition map is $G$-equivariant, along with the map
$$\mor_n(V, W) \to \mor_{n+1f}(V, W),$$
arising from the inclusion of $\CC^n$ into $\CC^{n + 1}$ given by $(v_1, \ldots, v_n) \mapsto (v_1, \ldots, v_n, 0)$, tells us that our homeomorphism is also $G$-equivariant, so we are done.
\end{proof}

\section{Derivatives of Functors}

Just as in the non-equivariant case, we can form inclusions $\cJ_0 \subset \cJ_1 \subset \cJ_2 \cdots$ and a notion of derivatives; if $E$ is a functor $\cJ_0 \to \Top_*$, then it has a derivative $E^{(1)}: \cJ_1 \to G-\Top_*$, which itself has a derivative $E^{(2)}: \cJ_2 \to G-\Top_*$, and so on. As in the non-equivariant case, the derivative is defined in terms of the adjoint to the restriction functor.

Restriction from $\cE_n$ to $\cE_{n-1}$ for $n > 0$ gives us a natural transformation $\res_{n-1}^n$, and more generally we can obtain a restriction map $\res_m^n: \cE_n \to \cE_m$ for $m \le n$ by successive compositions.

The restriction transformation $\res_m^n$ has a right adjoint, which we shall denote $\ind_m^n$. As in \cite{Weiss}, we can use Yoneda's lemma to work out a more useful definition of $\ind$. We have $\ind_m^n E(V) = \nat_n( \mor_n(V, - ), \ind_m^n E ) = \nat_m( \res_m^n \mor_n(V, - ), E )$.

For $m \le n, E \in \cE_m$, we will define the derivatives of $E$ by $E^{(n - m)} := \ind_m^n E$ (although some might argue that this should really be called $\coind$. In particular, this means that for $E \in \cJ_0$, the derivatives are given by $E^{(n)} = \ind_0^n E$.

As in the non-equivariant case detailed in \cite{Weiss}, we would like a more concrete description of the $\ind$ functors. Recall that for $V \in cJ_n$, we have the $G$-homotopy cofiber sequence $\mor_n(V \oplus \RR, -) \wedge S^{n \rho} \to \mor_n(V, -) \to \mor_{n + 1}(V, -)$ of functors in $\cE_n$. For $F \in \cE_n$, we can dualize to obtain the $G$-homotopy fiber sequence
$$\nat_n(\mor_{n + 1}(V, -), F) \to \nat_n(\mor_n(V, -), F) \to \nat_n(S^{n \rho} \wedge \mor_n(V \oplus \RR, -), F).$$

The Yoneda lemma tells us that this is equivalent to the homotopy fiber sequence
$$\res_n^{n + 1} \ind_n^{n + 1} F(V) \to F(V) \xrightarrow{\sigma^{ad}} \Omega^{n \rho} F(V \oplus \RR).$$

Thus, we obtain an explicit description of $F^{(1)}(V)$, and perhaps more importantly, one that reminds us of the derivative from Newtonian calculus, as $F^{(1)}$ measures the difference between $F(V)$ and $F(V oplus \RR)$ in some sense (and $\RR$ is the smallest increment available to us here).

\section{Unitary Actions}

For each non-negative integer $n$, the category $\cJ_n$ is equipped with a left action by the unitary group $\U_n$. As you may have already guessed, this is where the ``unitary" in ``unitary calculus" comes from. Since any continuous functor $E \in \cE_0$ has an $n$th derivative $E^{(n)} \in \cE_n$, this action is intimately related to the derivatives of the functors. We should point out here that because the group $G$ acts on the unitary group via conjugation as well, what we really have is a $\U_n \rtimes G$ action.

Like the orthogonal action described in \cite{Weiss}, the unitary action leaves the objects of $\cJ_n$ fixed and acts on morphisms via its action on $\CC^n$. We will denote the automorphism of $\cJ_n$ corresponding to $t \in \U_n$ by $\lambda(t)$.

This gives us a left action of $\U_n$ on the objects and morphisms of $\cE_n$ as well. For objects $E \in cE_n$, this action is given by $t E := E \circ \lambda(t)^{-1}$ for $t \in \U_n$. For morphisms, we have the trivial action $t \eta = \eta$, for $\eta \in \nat_n(E, F)$. In diagram form, if we have a natural transformation $\eta: E \to F$, and a morphism $\varphi \in \mor_n(V, W)$, then after applying $t \in U_n$ we obtain
$$\begin{array}{ccc}
tE(V) & \xrightarrow{\eta_V} & tF(V) \\
\big{\downarrow}{E(t^{-1} \varphi)} & & \big{\downarrow}{E(t^{-1} \varphi)} \\
tE(W) & \xrightarrow{\eta_W} & tF(W)
\end{array}$$

To examine how this orthogonal action relates to derivatives, we consider the pair of adjoint functors $\res_0^n, \ind_0^n$. We must have $\res_0^n (t E) = \res_0^n E, \res_0^n (t \eta) = \res_0^n \eta$, for any $\eta \in \nat_n(E, F)$, as the action over $\cE_0$ is trivial.

We know that the functor $F \mapsto \ind_0^n F = F^{(n)}$ is a right adjoint of $\res_0^n$. However, we can clearly see that the functor $F \mapsto t (F^{(n)})$ (for a fixed $t \in \U_n$) is also a right adjoint to $\res_0^n$. By the uniqueness of right adjoints, we have a unique isomorphism $\alpha_t: F^{(n)} \to t ( F^{(n)} )$ such that $u_F \alpha_t = u_F$, where $u_F: \res_0^n F^{(n)} \to F$ is the universal morphism.

From this, we obtain the following proposition:

\begin{prop} There exists a family $\{ \alpha_V \}: \U_n \times F^{(n)}(V) \to F^{(n)}(V)$ making the maps $ev: \mor_n(V, W) \wedge F^{(n)}(V) \to F^{(n)}(W), u: F^{(n)}(V) \to F(V)$ into $\U_n \rtimes G$-maps.
\end{prop}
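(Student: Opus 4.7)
The plan is to define $\alpha_V$ by specializing to $V$ the isomorphism $\alpha_t: F^{(n)} \to tF^{(n)}$ already constructed via uniqueness of right adjoints, and then to derive continuity, the group-action law, equivariance of $ev$ and $u$, and the $G$-twist all from that same uniqueness principle. Because $\lambda(t)$ acts trivially on objects of $\cJ_n$, the underlying $G$-space of $(tF^{(n)})(V)$ literally equals $F^{(n)}(V)$, so the formula $\alpha_V(t, x) := (\alpha_t)_V(x)$ defines an honest self-map. Continuity in $x$ is immediate because each $\alpha_t$ is a morphism in $\cE_n$; continuity in $t$ is inherited from the continuity of the enriched adjunction $\res_0^n \dashv \ind_0^n$ together with the continuity of the $\U_n$-action on $\cJ_n$.

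To establish the group law $\alpha_{st} = \alpha_s \circ \alpha_t$ I would exhibit both $\alpha_{st}$ and the composite $F^{(n)} \xrightarrow{\alpha_t} tF^{(n)} \xrightarrow{t(\alpha_s)} t(sF^{(n)}) = (st)F^{(n)}$ as isomorphisms of right adjoints to $\res_0^n$, each compatible with the universal morphism $u_F: \res_0^n F^{(n)} \to F$, and then invoke the uniqueness clause to conclude they agree; evaluating at $V$ recovers the spatial group law. The $\U_n$-equivariance of $ev$ and $u$ then comes essentially for free: equivariance of $ev$ is exactly the naturality square of $\alpha_t$ applied to a morphism $\varphi \in \mor_n(V, W)$, and equivariance of $u$ (with trivial target action) is the defining relation $u_F \circ \res_0^n(\alpha_t) = u_F$.

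To upgrade to a $\U_n \rtimes G$-action, I would use that complex conjugation $g \in G$ acts compatibly on $\cJ_n$, on $\U_n$ by $t \mapsto \bar t$, on $F$, and on the entire adjunction. Conjugating $\alpha_t$ by $g$ therefore produces another isomorphism $F^{(n)} \to (\bar t) F^{(n)}$ still satisfying the universal property, so by uniqueness $g \alpha_t g^{-1} = \alpha_{\bar t}$; this is precisely the semidirect product relation. Combined with the $G$-equivariance of $ev$ and $u$ that is already built into the construction of the derivative, this promotes everything to the $\U_n \rtimes G$-equivariant setting.

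The main obstacle is notational vigilance rather than any single hard step. One must keep track that $\lambda(t)$ fixes objects but permutes morphisms, so the spatial self-map $\alpha_V(t, -)$ is secretly implemented via $\lambda(t)^{-1}$ on the morphism side, and that the $G$-twist in the semidirect product is complex conjugation rather than a trivial twist — this is exactly what makes uniqueness yield $\alpha_{\bar t}$ in the conjugation formula, and getting any of these conventions wrong will give an action that fails to close up into the semidirect product.
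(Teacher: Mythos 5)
Your proposal is correct and takes essentially the same route as the paper: the family $\{\alpha_V\}$ is obtained by evaluating at $V$ the isomorphism $\alpha_t: F^{(n)} \to t(F^{(n)})$ furnished by uniqueness of right adjoints to $\res_0^n$ (with $u_F \alpha_t = u_F$), with equivariance of $ev$ coming from naturality, the semidirect-product twist from conjugating the adjunction by $g$, and continuity/explicitness from the Yoneda description $F^{(n)}(V) = \nat_0(\mor_n(V,-),F)$. The only caveat is the left/right bookkeeping you yourself flag: with the paper's convention $tE = E \circ \lambda(t)^{-1}$ one has $t(sF^{(n)}) = (ts)F^{(n)}$ rather than $(st)F^{(n)}$, so the cocycle identity must be stated with the corresponding order, but this does not affect the substance of the argument.
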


The Yoneda definition of the derivative, $F^{(n)}(V) = \nat_0( \mor_n(V, -), F )$ gives us an explicit definition of $\alpha_V$.

We would like to study this property, and objects of $\cE_n$ that exhibit it, including but not necessarily only the $n$th derivatives.

\begin{defn} A symmetric object in $\cE_n$ is an object $E$ together with continuous actions $\U_n \rtimes G \times E(V) \to E(V)$ for each $V \in \cJ_n$, such that for each $W \in \cJ_n$, the evaluation map $\mor_n(V, W) \wedge E(V) \to E(W)$ is a $\U_n \rtimes G$ map.
\end{defn}

\chapter{Polynomial Functors}

\section{Computing Higher Derivatives}

We begin this chapter by crafting a $G$-equivariant analogue of the notion of polynomial functor. We will need to first consider another category into which we may embed $\cJ$.

\begin{defn}
Let $\cJ$ be the category whose objects are the finite dimensional complex vector subspaces of $\cU_\CC := \cU \otimes \CC$, considered as an inner product space with the standard Hermitian inner product, and whose morphism spaces are given by the inner product preserving complex linear transformations. If $V, W \in \cJ_\CC$, we will denote the morphisms in $\cJ_\CC$ by $\tilde \mor(V, W)$.
\end{defn}

It is clear that we have an inclusion of categories $\cJ \hookrightarrow \cJ_\CC$ given by sending $V$ to $V_\CC$. We may extend the $G$ action to $\cJ_\CC$, but it is no longer the case that all objects are fixed by the generator $g \in G$. Still, it is clear that $g$ acts as an involution on both objects and morphisms. Furthermore, the objects of $\cJ_\CC$ that are sent to themselves under the action of $G$ are precisely those in the image of $\cJ$.

This allows us to define $\cE_\CC$ as the category of $G$-enriched functors $E: \cJ_\CC \to G-\Top$, and the functor $\cJ \to \cJ_\CC$ gives rise to a dual functor $\cE_\CC \to \cE$. For all of our objects $F \in \cE$ of interest in this paper, there exists a canonical lift of $F$ in $\cE_\CC$, which we shall denote by $\tilde F$.

We now make the following definition for an indexing category that we will be using frequently.

\begin{defn}
Fix a nonnegative integer $n$, and let $\cC_n$ be the category whose objects are nonzero complex subspaces $U \subseteq \CC^{n + 1}$ with the only maps allowed being the inclusions. By thinking of $\cC_n$ as a subcategory of $\cJ_\CC$, and noting that it is closed under the given $G$ action, $\cC_n$ inherits a $G$ action which is an involution on its objects.
\end{defn}

We now present the following theorem, the non-equivariant version of which is used in \cite{Arone}, that will assist us in computing higher derivatives of functors.

\begin{thm}
Let $V, W \in \cJ$, and define $M_n(V, W) := \hocolim_{U \in \cC_n^{op}} \tilde \mor( U \oplus V_\CC, W_\CC )$, where $\cC_n$ is as defined above. The unreduced mapping cone of the restriction map
$$M_n(V, W) \to \tilde \mor(V_\CC, W_\CC) = \mor(V, W)$$
is $G$-homeomorphic to $\mor_{n + 1}(V, W)$. Furthermore, this homeomorphism is a natural transformation of functors on $\cJ^{op} \times \cJ$.
\end{thm}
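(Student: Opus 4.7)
The plan is to construct an explicit $G$-equivariant homeomorphism following Arone's treatment of the non-equivariant analogue in \cite{Arone}, where the $G$-equivariance will come from the compatibility of polar decomposition with complex conjugation. The statement can be checked fiberwise over $\mor(V, W)$, so fix $\varphi \in \mor(V, W)$ with cokernel $E := \varphi(V_\CC)^\perp \subseteq W_\CC$. The fiber of $\mor_{n+1}(V, W) \to \mor(V, W)$ over $\varphi$ is the one-point compactification of $\CC^{n+1} \otimes E \cong \Hom_\CC(\CC^{n+1}, E)$, and the fiber of $M_n(V, W) \to \mor(V, W)$ over $\varphi$ is $\hocolim_{U \in \cC_n^{op}} \tilde\mor(U, E)$, so it suffices to produce a fiberwise $G$-equivariant homeomorphism between the unreduced mapping cone of the latter and the former.

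The map I will use is built from polar decomposition. Any nonzero linear map $T \colon \CC^{n+1} \to E$ factors uniquely as $T = s \circ a$, where $a = (T^* T)^{1/2}$ is a positive semidefinite operator on $\CC^{n+1}$ with image a nonzero subspace $U := (\ker T)^\perp \subseteq \CC^{n+1}$ and $s \colon U \hookrightarrow E$ is an isometric embedding. The spectral decomposition of $a|_U$ produces a flag of eigenspaces $U = U_0 \supsetneq U_1 \supsetneq \cdots \supsetneq U_k$ together with barycentric weights coming from the normalized eigenvalues of $a/\|a\|$; this is precisely the data of a simplex in the nerve of $\cC_n^{op}$, and combined with $s$ it gives a point of the fiber of $M_n(V, W)$ over $\varphi$. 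The scalar $\|a\| \in [0, \infty]$ supplies the cone coordinate of the mapping cone, with $T = 0$ mapping to the cone apex and $\|T\| \to \infty$ mapping to the basepoint of the Thom fiber. For $G$-equivariance, observe that if $T = s \circ a$ is the polar decomposition of $T$, then $\bar T = \bar s \circ \bar a$ is that of $\bar T$, with $\bar a$ a positive operator on $\bar U$ and $\bar s \colon \bar U \hookrightarrow E$. This matches both the $G$-action on $M_n(V, W)$, which permutes the $U$- and $\bar U$-summands of the hocolim and conjugates morphisms, and the $G$-action on the Thom fiber by simultaneous conjugation on $\CC^{n+1}$ and $E$. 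Naturality in $\cJ^{op} \times \cJ$ follows from the compatibility of polar decomposition with pre- and post-composition by isometries and with the standard identification of cokernels under such compositions.

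The main obstacle will be verifying that the spectral/barycentric stratification of positive operators assembles continuously over varying subspaces $U \subseteq \CC^{n+1}$ into the standard simplicial decomposition of $M_n(V, W)$, and that the resulting bijection is a homeomorphism (and not merely a continuous bijection). This is the technical heart of Arone's non-equivariant argument: one must check compatibility with the face and degeneracy maps of the nerve of $\cC_n^{op}$ and that the radial coordinate extends continuously across strata of differing rank. Once this identification is set up, the $G$-equivariance and the naturality are immediate from the observations above.
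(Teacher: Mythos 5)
Your construction is, at bottom, the same homeomorphism that the paper's proof relies on, but you build it by hand where the paper does not: the paper's argument simply notes that Weiss's proof of the real case transfers verbatim to complex inner-product spaces, giving the non-equivariant homeomorphism, and then checks $G$-equivariance by observing that $\mor(V,W)\to\Cone$ is equivariant and that the fibers on both sides carry the action induced by complex conjugation on $\CC^{n+1}$ and on $\coker(\varphi)$. You instead inline the Weiss-style construction: polar decomposition $T=s\circ a$, the eigenspace flag and normalized eigenvalues of $a$ giving a simplex of the nerve of $\cC_n$ with barycentric weights, the isometry $s$ (defined on the largest member of the flag, which is indeed where the hocolim wants it), and the norm as radial coordinate; your equivariance argument (conjugation commutes with polar decomposition, so the flag and isometry are conjugated while the eigenvalues, hence the weights, are unchanged) is exactly the explicit form of the paper's fiber comparison, and the naturality remark matches as well. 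What your route buys is self-containedness and a visibly equivariant formula; what it costs is precisely the point you flag -- continuity of the spectral/barycentric identification across strata where singular values collide or vanish -- which the paper sidesteps by citing Weiss, and which you would equally be entitled to quote rather than redo.

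Two corrections, though. First, the orientation of your cone coordinate is backwards: in the unreduced mapping cone of $M_n(V,W)\to\mor(V,W)$, the canonical copy of $\mor(V,W)$ must correspond to the zero section of $\gamma_{n+1}(V,W)$ (i.e.\ to $T=0$), and the single cone apex to the single basepoint at infinity of the Thom space. As written ($T=0$ sent to the apex), the map cannot be injective -- the zero vectors over distinct $\varphi$ would all land on the one apex -- and it would be incompatible with the naturality claim and with the later use of the cofiber sequence $M_n(V,W)_+\to\mor(V,W)_+\to\mor_{n+1}(V,W)$, where $\mor(V,W)$ enters as the zero section. Second, and relatedly, the statement is not literally ``checked fiberwise'': over a fixed $\varphi$ both sides are unreduced suspensions of $\hocolim_{0\ne U\subseteq\CC^{n+1}}\tilde\mor(U,\coker\varphi)$ and are homeomorphic with either orientation, so the actual content is global -- continuity in $\varphi$ and the matching of the two collapsed ends just described -- which is what forces the correct orientation and is where the cited construction does its real work.
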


\begin{proof}
The real case is proven in \cite{Weiss}, and it is clear that the same proof works in the complex case. In particular, the setup in the latter case uses the same underlying spaces as we do here and an equivalent indexing category (as noted above). Because of this, we know that non-equivariantly, our mapping cone is homeomorphic to that one, namely $\mor_{n + 1}(V, W)$. Thus, it suffices to show that the given homeomorphism is $G$-equivariant.

It is clear that the map from $\mor(V, W)$ to the given mapping cone is a $G$-equivariant, as $M_n(V, W) \to \mor(V, W)$ is $G$-equivariant. This tells us that the base space of the mapping cone is $G$-homeomorphic to $\mor(V, W)$, so we need only show that the fibers have the same $G$ action as those of $\mor_{n + 1}(V, W)$. Over $\varphi \in \mor(V, W)$, the former arise from complementary subspaces of $V_\CC$ in $W_\CC$ which are also subspaces of $\CC^{n + 1}$, while the latter is $(n + 1) \coker(\varphi)$. It is evident by construction that both representations inherit their $G$ action from that of $\CC^{n + 1}$ arising from complex conjugation, and therefore are $G$ homeomorphic. The result follows.
\end{proof}

As in \cite{Weiss}, we wish to consider polynomial functors. Our criterion will be essentially the same as before, but we must pay close attention to the indexing category, as it now receives an action from $G$. For a given functor $F: \cJ \to G-\Top$ define
$$T_n F(V) := \holim_{U \in \cC_n} \tilde F(V_\CC \oplus U ).$$

\begin{defn}
We say that a functor $F \in \cE$ is polynomial of degree $\le n$ if the natural map $F(V) \to T_n F(V)$ is a homotopy equivalence for all $V \in \cJ$.
\end{defn}

\begin{prop}
Suppose $E \in \cE_0$ and $V \in \cJ$. Then, the sequence
$$ E^{(n + 1)} (V) \xrightarrow u E(V) \xrightarrow p T_n E (V)$$
is a $G$-fibration sequence.
\end{prop}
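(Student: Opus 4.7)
The plan is to derive the stated $G$-fibration sequence by applying the contravariant hom $\nat_0(-, E)$ to the $G$-cofibration sequence of (pointed, representable) functors in $\cE_0$
$$\bigl(M_n(V, -)\bigr)_+ \to \mor_0(V, -) \to \mor_{n+1}(V, -)$$
provided by the preceding theorem, which identifies $\mor_{n+1}(V, W)$ with the unreduced mapping cone of $M_n(V, W) \to \mor(V, W)$, naturally in $W$ and compatibly with the $G$-action. Viewing $\nat_0(-, E)$ as the derived mapping functor, it sends $G$-cofibration sequences of suitably cofibrant pointed functors in $\cE_0$ to $G$-fibration sequences of based $G$-spaces; each term above is built from representables via disjoint-basepoint adjunction and a homotopy colimit, so the requisite cofibrancy is standard.

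Next I identify the three resulting terms. By the Yoneda-style formula $\ind_m^n E(V) = \nat_m(\res_m^n \mor_n(V, -), E)$ recorded earlier, the leftmost term is $\ind_0^{n+1} E(V) = E^{(n+1)}(V)$, with the induced map to the middle term being the universal morphism $u$. The middle term is $\nat_0(\mor_0(V, -), E) = E(V)$ by the pointed enriched Yoneda lemma. For the right-hand term, I pull $\nat_0$ through the homotopy colimit defining $M_n$, converting it to a homotopy limit:
$$\nat_0\bigl((M_n(V, -))_+, E\bigr) \;\simeq\; \holim_{U \in \cC_n} \nat_0\bigl(\tilde\mor(U \oplus V_\CC, -_\CC)_+, E\bigr).$$
Passing to the canonical extension $\tilde E \in \cE_\CC$ and applying Yoneda inside $\cJ_\CC$, each indexed entry is $\tilde E(V_\CC \oplus U)$, so the homotopy limit is exactly $T_n E(V)$, and the composite $E(V) \to T_n E(V)$ is the natural map $p$.

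The $G$-equivariance of each map in the resulting fibration sequence is inherited from the $G$-equivariance of the starting cofibration sequence (as established in the preceding theorem), together with the fact that Yoneda, the hocolim--holim interchange, and the indexing category $\cC_n$ carry compatible $G$-actions. I expect the main obstacle to be tracking the $G$-action through the hocolim--holim interchange: since $\cC_n$ carries a nontrivial $G$-involution on objects, one must verify that the resulting homotopy limit is computed in the genuinely equivariant sense so as to agree with the stated definition of $T_n E(V)$. This reduces to naturality of $\holim$ with respect to $G$-equivariant functors between $G$-categories, but deserves explicit verification.
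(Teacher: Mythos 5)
Your argument is essentially the paper's own proof: both dualize the identification of $\mor_{n+1}(V,-)$ as the mapping cone of $M_n(V,-) \to \mor_0(V,-)$ by applying $\nat_0(-,E)$, then use Yoneda together with the hocolim--holim interchange to identify the three terms as $E^{(n+1)}(V)$, $E(V)$, and $T_n E(V)$, with the $G$-equivariance carried along from the preceding theorem. The extra remarks on cofibrancy and on tracking the $G$-action through the interchange are reasonable care but do not change the route.
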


\begin{proof}
The map $p$ can be rewritten using the Yoneda lemma as
\begin{align*}
\nat_0 (\mor_0( V, - ), E) = \nat_0 (\tilde \mor_0( V_\CC, - ), E) & \to \holim_{U \in \cC_n} \nat_0 (\tilde \mor_0( U \oplus V_\CC, -), E) \\
& = \nat_0( \hocolim_{U \in \cC_n^{op}} \tilde mor_0( U \oplus V_\CC, - ), E) \\
& = \nat_0( M_n(V, -)_+, E),
\end{align*}
which arises from the natural transformation $M_n(V, -)_+ \to \mor_0(V, -)$.

We also know from the theorem at the end of the previous section that the reduced mapping cone of the map $M_n(V, W)_+ \to \mor_0(V, W)$ is $\mor_{n + 1}(V, W)$. By mapping to $E$, we see that the homotopy fiber of the original map is $\nat_0( \mor_{n + 1}( V, - ), E)$, which is simply the definition of $E^{(n + 1)}(V)$. Thus, we have a $G$-fibration sequence.
\end{proof}

\begin{lem}
Let $\eta: E \to F$ be a morphism in $\cE_0$, and suppose that $E$ is polynomial of degree $\le n$ and that $F^{(n + 1)}$ vanishes. Then, the functor $V \mapsto \hofiber [ E(V) \xrightarrow \eta F(V) ]$ is also polynomial of degree $\le n$.
\end{lem}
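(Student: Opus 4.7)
The plan is to use the fibration sequence from the previous proposition together with the fact that $T_n$, being defined as a homotopy limit, preserves homotopy fibers.

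First, I would note that the hypothesis $F^{(n+1)} \simeq *$ (together with the fibration sequence $F^{(n+1)}(V) \to F(V) \to T_n F(V)$ from the preceding proposition) already implies that $F$ itself is polynomial of degree $\le n$: the map $F(V) \to T_n F(V)$ has $G$-contractible homotopy fiber, hence is a $G$-equivalence. So both $E$ and $F$ satisfy $E(V) \xrightarrow{\simeq} T_n E(V)$ and $F(V) \xrightarrow{\simeq} T_n F(V)$.

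Next, writing $H(V) := \hofiber[E(V) \xrightarrow{\eta} F(V)]$, I would apply the functor $T_n$ levelwise to the fiber sequence $H \to E \to F$. Since $T_n F(V) = \holim_{U \in \cC_n} \tilde F(V_\CC \oplus U)$ is a homotopy limit of a diagram in $G\text{-}\Top$, and homotopy limits in $G\text{-}\Top$ preserve homotopy fiber sequences (being right Quillen adjoints), one obtains a natural $G$-equivalence
$$T_n H(V) \;\simeq\; \hofiber\bigl[\, T_n E(V) \to T_n F(V) \,\bigr].$$
Of course this uses that the canonical lift to $\cE_\CC$ commutes with hofiber, which is automatic from the pointwise definition of $H$.

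Finally, I would compare the two fiber sequences via the natural map $H \to T_n H$:
$$\begin{array}{ccc}
H(V) & \longrightarrow & E(V) \longrightarrow F(V) \\
\big\downarrow & & \big\downarrow\ \simeq \quad \big\downarrow\ \simeq \\
T_n H(V) & \longrightarrow & T_n E(V) \longrightarrow T_n F(V).
\end{array}$$
The two right-hand vertical maps are $G$-equivalences by the first step, so by the long exact sequence (or equivalently the five lemma applied to the induced map of fiber sequences) the left-hand vertical map $H(V) \to T_n H(V)$ is a $G$-equivalence as well. This is precisely the statement that $H$ is polynomial of degree $\le n$.

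The only real subtlety is the interchange of $T_n$ with $\hofiber$ in the $G$-equivariant setting; everything else is formal manipulation of the fibration sequence from the preceding proposition. I expect this interchange to be the main (but minor) hurdle, since one must verify it as a statement about $G$-diagrams rather than appealing naively to the non-equivariant case — but it follows from the general fact that homotopy limits in any model category, in particular $G\text{-}\Top$, preserve homotopy fibers.
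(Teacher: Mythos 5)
Your overall skeleton --- $T_n$ commutes with homotopy fibers, then compare the two fibration sequences via the five lemma --- is the intended route (the paper's own proof is just an appeal to the $G$-fibration sequence $E^{(n+1)}(V) \to E(V) \to T_n E(V)$). But your first step contains a genuine error: from the $G$-contractibility of the homotopy fiber of $F(V) \to T_n F(V)$ you conclude that this map is a $G$-equivalence. A map whose homotopy fiber over the basepoint is ($G$-)contractible need not be an equivalence: it can fail on $\pi_0$ and on components away from the basepoint (already the basepoint inclusion $\ast \hookrightarrow S^0$ has contractible homotopy fiber). This is exactly why the hypotheses of the lemma are asymmetric: $E$ is assumed polynomial of degree $\le n$, while of $F$ only the vanishing of $F^{(n+1)}$ is assumed. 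If your step 1 were valid, $F$ itself would always be polynomial of degree $\le n$, the lemma would collapse into the later corollary about a map of two polynomial functors, and the other corollary (``$F^{(n+1)}$ vanishes $\Rightarrow \Omega F$ is polynomial of degree $\le n$'') together with the remark about functors admitting deloopings would be pointless --- the whole reason $\Omega F$ appears there is that $F$ need not be polynomial when only its $(n+1)$st derivative vanishes.

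The remainder is salvageable, but the comparison must be run with the weaker input. The proposition identifies $\hofiber[F(V) \to T_n F(V)]$ with $F^{(n+1)}(V) \simeq \ast$, which gives isomorphisms on $\pi_k^H$ at the basepoint for $k \ge 1$ and control only of the basepoint component on $\pi_0^H$, for each $H \le G$; it does not give a $G$-equivalence $F \to T_n F$. Feeding this, together with the genuine $G$-equivalence $E \to T_n E$ (valid on all components), into the map of long exact sequences of $H \to E \to F$ and $T_n H \to T_n E \to T_n F$ (where $T_n H \simeq \hofiber[T_n E \to T_n F]$ --- the interchange you flag, which is indeed unproblematic) does yield that $H(V) \to T_n H(V)$ is a $G$-equivalence, because the homotopy fiber only sees the components of $E(V)$ lying over the basepoint component of $F(V)$. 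That $\pi_0/\pi_1$ bookkeeping is precisely the content your overclaim skips; as written, the proof has a gap at its first step, and the ``only real subtlety'' is not the one you named.
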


\begin{proof}
This follows immediately from the applying the above proposition.
\end{proof}

We have two important corollaries of this lemma.

\begin{cor}
If $E, F$ are polynomial of degree $\le n$, and we have a natural transformation $\eta: E \to F$, then the homotopy fiber of $\eta$ is polynomial of degree $\le n$ as well.
\end{cor}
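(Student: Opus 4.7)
The plan is to reduce the corollary to the immediately preceding lemma by showing that the hypothesis ``$F$ polynomial of degree $\le n$'' is actually stronger than the lemma's hypothesis ``$F^{(n+1)}$ vanishes''. Once this implication is in hand, the corollary is immediate: given the natural transformation $\eta \colon E \to F$ between functors polynomial of degree $\le n$, we have $E$ polynomial of degree $\le n$ and $F^{(n+1)}$ vanishing, so the lemma applies verbatim.

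To produce the required implication, I would apply the proposition from this section to $F$ itself, obtaining the $G$-fibration sequence
$$F^{(n+1)}(V) \xrightarrow{u} F(V) \xrightarrow{p} T_n F(V).$$
By the definition of polynomial of degree $\le n$, the map $p$ is a $G$-homotopy equivalence for every $V \in \cJ$. Since the homotopy fiber of a $G$-equivalence of $G$-spaces is $G$-contractible, we conclude that $F^{(n+1)}(V)$ is $G$-contractible for every $V$, i.e.\ $F^{(n+1)}$ vanishes in the sense used by the lemma.

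Feeding $E$, $F$, and $\eta$ into the preceding lemma then yields that $V \mapsto \hofiber[E(V) \xrightarrow{\eta} F(V)]$ is polynomial of degree $\le n$, which is exactly the claim. There is no genuine obstacle here; the only thing one should be mildly careful about is that ``homotopy equivalence'' in the definition of polynomial functor is interpreted in the $G$-equivariant sense, so that the induced map on homotopy fibers (in the category of $G$-spaces) really is $G$-contractible, matching the $G$-equivariant vanishing needed to invoke the lemma. Given the setup of Section~2, this is the intended reading.
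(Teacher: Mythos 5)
Your proposal is correct and is essentially the argument the paper intends: the corollary is stated as an immediate consequence of the preceding lemma, using the fibration sequence $F^{(n+1)}(V) \to F(V) \to T_nF(V)$ to see that a degree $\le n$ polynomial functor has vanishing $(n+1)$st derivative. Your extra care about the $G$-equivariant reading of "homotopy equivalence" matches the paper's setup.
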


\begin{cor}
If $F \in \cE_0$ and $F^{(n + 1)}$ vanishes, then $\Omega F$ is polynomial of degree $\le n$.
\end{cor}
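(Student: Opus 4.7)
The plan is to obtain this as an immediate application of the preceding lemma with a well-chosen source functor. Take $E$ to be the constant functor $V \mapsto *$ (the one-point space). This is tautologically polynomial of every degree, since $T_n(*)(V) = \holim_{U \in \cC_n} * = *$ and the natural map $* \to *$ is the identity, hence a $G$-equivalence. There is a unique morphism $\eta: * \to F$ in $\cE_0$ (sending the point to the basepoint of $F(V)$).

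Next, I would identify the hofiber of $\eta_V$. Since $* \to F(V)$ is the inclusion of the basepoint, standard pointed-space theory gives $\hofiber[* \to F(V)] \simeq \Omega F(V)$ as $G$-spaces (the loop space being formed on the $G$-space $F(V)$ with its basepoint). This identification is natural in $V$, so it assembles into an equivalence of functors $\hofiber[* \to F] \simeq \Omega F$ in $\cE_0$.

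Now I invoke the preceding lemma: $E = *$ is polynomial of degree $\le n$, and $F^{(n+1)}$ vanishes by hypothesis, so the hofiber functor $V \mapsto \hofiber[*\to F(V)]$ is polynomial of degree $\le n$. Combined with the identification above, this gives that $\Omega F$ is polynomial of degree $\le n$, as desired.

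There is essentially no obstacle here beyond bookkeeping, since the lemma has already done the real work by combining the $G$-fibration sequence $F^{(n+1)}(V) \to F(V) \to T_n F(V)$ with the fact that a homotopy fiber of a map between functors whose map to $T_n$ has contractible fiber is still polynomial. The only point worth double-checking is that the constant functor at the basepoint genuinely lies in $\cE_0$ and that the hofiber construction commutes with $T_n$ (which it does, since $T_n$ is a homotopy limit and homotopy limits commute with homotopy fibers). Both are standard.
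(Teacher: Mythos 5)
Your proposal is correct and is essentially identical to the paper's own proof: take $E \equiv *$, note that the constant functor is polynomial of degree $\le n$, identify $\hofiber[* \to F(V)]$ with $\Omega F(V)$, and apply the preceding lemma. The extra remarks about naturality and $T_n$ commuting with homotopy fibers are fine but not needed beyond what the lemma already provides.
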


\begin{proof}
Set $E \equiv *$. Here, the functor $V \mapsto \hofiber[E(V) \to F(V)]$ is simply $V \mapsto \Omega F(V)$.
\end{proof}

We make the following remark here.

\begin{rmk}
For functors which have a de-looping, such as infinite loop spaces, the above corollary gives us a simple criterion for being polynomial of degree $\le n$, namely that of the $n+1$th derivative vanishing. This means that such functors follow some of our Newtonian calculus intuition about the functor calculus presented here.
\end{rmk}

\section{Approximation by Polynomial Functors}

So far, we have been studying the calculus of functors from the Newtonian viewpoint- computing $n$th derivatives, and making statements about the vanishing of higher order derivatives in order to classify our functors of interest. However, one knows that ordinary calculus can be viewed through an alternative lens; namely that concerned with polynomial approximations of functions. Likewise, one can study functors on $\cJ$ in this manner as well.

\begin{defn}
Let $F \in \cE$, and consider the natural map $p: F \to T_n F$. We define the $n$th degree polynomial approximation of $F$ to be the homotopy colimit of the diagram
$$F \xrightarrow p T_n F \xrightarrow p \to T_n^2 \xrightarrow p \cdots$$
\end{defn}

It is clear that $P_n F$ is polynomial of degree $\le n$, as the map
$$P_n F \to T_n P_n F = T_n \hocolim_k T_n^k F = \hocolim_k T_n^{k + 1} F \simeq P_n F$$
is obviously a $G$-equivalence. \\

Secondly, the maps $p: F \to T_n F$ induce a natural map $F \to P_n F$. If $F$ is polynomial of degree $\le n$, then each $p$ is by definition a $G$-equivalence, and therefore the map $F \to P_n F$ is a $G$-equivalence as well. \\

Lastly, the construction $P_n F$ has another useful property, that of being an approximation of degree $n$. \\

\begin{defn}
A morphism $\eta: E \to F$ in $\cE_0$ is an approximation of order $m$ if the maps
$$\begin{array}{llll}
\eta_*: E(\RR^\infty) \to F(\RR^\infty) \\
\eta_*: \Th E^{(i)} \to \Th F^{(i)} &&& \text{for } 1 \le i \le m
\end{array}$$
are $G$-equivalences.
\end{defn}

To prove this, we will need the following lemma. \\

\begin{lem}
Suppose $E \in \cE$ and $q$ is a nonnegative integer. Then, the natural map $p_*: E \to T_q E$ is a $G$-equivalence at infinity. In particular, if $E$ is $G$-connected at infinity, then so is $T_q E$.
\end{lem}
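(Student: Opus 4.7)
The plan is to interpret ``$G$-equivalence at infinity'' in the same sense used in the preceding definition of approximation of order $m$, namely that $p_*$ induces a $G$-equivalence after evaluation at $\RR^\infty$; here $E(\RR^\infty) := \hocolim_V E(V)$ (colimit over finite-dimensional $V \in \cJ$), and similarly for $T_q E(\RR^\infty)$. After commuting the $\holim$ defining $T_q$ past the filtered $\hocolim$ in $V$, the task reduces to showing that the natural map
$$E(\RR^\infty) \longrightarrow \holim_{U \in \cC_q} \tilde E(\RR^\infty_\CC \oplus U)$$
is a $G$-equivalence.

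First I would verify that for each $U \in \cC_q$, the natural map $E(\RR^\infty) \to \tilde E(\RR^\infty_\CC \oplus U)$ induced by the inclusion $\RR^\infty_\CC \hookrightarrow \RR^\infty_\CC \oplus U$ is already a $G$-equivalence. To see this, let $k := \dim_\CC U$ and choose a complex-linear isometry $U \cong \CC^k$; this yields an isomorphism $V_\CC \oplus U \cong (V \oplus \RR^k)_\CC$ in $\cJ_\CC$, under which the natural map $E(V) \to \tilde E(V_\CC \oplus U)$ corresponds to the inclusion-induced map $E(V) \to E(V \oplus \RR^k)$. Passing to the colimit as $V$ exhausts $\cU$ reindexes $\hocolim_V E(V \oplus \RR^k)$ back to $E(\RR^\infty)$, and the map is identified with the identity, which is trivially a $G$-equivalence.

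These individual equivalences assemble into a levelwise $G$-equivalence from the constant $\cC_q$-diagram with value $E(\RR^\infty)$ to the diagram $U \mapsto \tilde E(\RR^\infty_\CC \oplus U)$, which in turn induces an equivalence of homotopy limits. Because $\cC_q$ has $\CC^{q+1}$ as a terminal object (and this object is $G$-fixed), its classifying space is $G$-contractible, so the $\holim$ of the constant diagram is $G$-equivalent to $E(\RR^\infty)$. Composing yields the desired $G$-equivalence $E(\RR^\infty) \to T_q E(\RR^\infty)$; the ``in particular'' statement then follows immediately since $G$-connectivity transfers along $G$-equivalences.

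The main obstacle will be arranging the individual equivalences so that they respect the $G$-action on $\cC_q$, which sends $U \mapsto \bar U$ and is nontrivial on subspaces $U$ that are not self-conjugate. The chosen isometry $U \cong \CC^k$ is noncanonical, but its complex conjugate provides a matching isometry for $\bar U$, and the space of admissible isometries is connected, so the ambiguity should wash out at the level of homotopy limits. A secondary point to justify is the commutation of $\hocolim_V$ with $\holim_{U \in \cC_q}$, which follows from the finite simplicial dimension of $\cC_q$ (bounded by $q$, via the longest chain $\CC \subsetneq \CC^2 \subsetneq \cdots \subsetneq \CC^{q+1}$) together with the filteredness of the colimit in $V$.
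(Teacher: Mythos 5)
Your proposal follows essentially the same route as the paper: evaluate at $\RR^\infty$, commute the homotopy limit over $\cC_q$ past the filtered homotopy colimit (justified by the finiteness/compactness of the nerve of $\cC_q$), identify each term $\tilde E(\RR^\infty_\CC \oplus U)$ with $E(\RR^\infty)$, and collapse the resulting (essentially constant) diagram using the maximal element $\CC^{q+1}$ of $\cC_q$. The one soft spot you flag yourself --- the noncanonical isometries $U \cong \CC^k$ and their compatibility with the involution $U \mapsto \bar U$ --- is best eliminated rather than patched: use the natural inclusion-induced maps $\hocolim_k \tilde E(\CC^k) \to \hocolim_k \tilde E(\CC^k \oplus U)$, which are equivalences by cofinality, are natural in $U \in \cC_q$, and are automatically $G$-equivariant; this is in effect what the paper does, and it removes any need for the ``connected space of isometries'' argument.
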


\begin{proof} The proof is similar to the analogous result in \cite{Weiss}. As is the case there, the basic idea here is that the codomain is a homotopy colimit of homotopy limits, which we shall express instead as a homotopy limit of homotopy colimits (in particular those that look like the homotopy colimit defining the domain).

We have
\begin{align*}
(T_q E)(\RR^\infty) & = \hocolim_n T_q E(\RR^n) \\
& = \hocolim_n \holim_{U \in \cC_q} \tilde E(\CC^n \oplus U) \\
& = \hocolim_n \holim_{U \in \cC_q} \tilde E( \CC^n \oplus U ) \\
& \simeq \hocolim_n \holim_{U \in \cC_q} \hocolim_{k \le n} \tilde E( \CC^k \oplus U ) \\
& \simeq \colim_n \holim_{U \in \cC_q} \hocolim_{k \le n} \tilde E( \CC^k \oplus U ) \\
& \cong \holim_{U \in \cC_q} \hocolim_{k \le \infty} \tilde E( \CC^k \oplus U )
\end{align*}, because the nerve of the poset that the homotopy limit is taken over is compact. Because taking fixed points commutes with homotopy colimits, these are all equivalences on $G$-fixed points as well.

Now, we have
\begin{align*}
\holim_{U \in \cC_q} \hocolim_{k \le \infty} \tilde E( \CC^k \oplus U ) & \cong \holim_{U \in \cC_q} \hocolim_k \tilde E( \CC^k ) \\
& = \holim_{U \in cC_q} \tilde E( \CC^\infty ) \\
& \simeq \tilde E( \CC^\infty ) \\
& = E(\RR^\infty),
\end{align*}
with the second to last equivalence arising from the fact that the poset has a maximal element, which clearly gives rise to an equivalence for the $G$-fixed points as well.

It is also evident that this last equivalence is the inverse of $p_*$.
\end{proof}

We are now ready for the following proposition.

\begin{prop} Let $F \in \cE_0$. The natural map $\psi: F \to P_n F$ is an approximation of order $n$.
\end{prop}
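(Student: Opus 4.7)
The plan is to verify separately the two conditions in the definition of an approximation of order $n$: that $\psi_*\colon F(\RR^\infty)\to P_n F(\RR^\infty)$ is a $G$-equivalence, and that $\Th \psi^{(i)}\colon \Th F^{(i)}\to \Th (P_n F)^{(i)}$ is a $G$-equivalence for each $1\le i\le n$. The first follows almost directly from the preceding lemma; the second requires an analysis of the fibers of $p\colon F\to T_n F$ and their lower-order derivatives.

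For the condition on $\RR^\infty$-values, the preceding lemma says $p\colon E\to T_q E$ is a $G$-equivalence at infinity for every $E\in\cE$. Applying this with $E=T_n^k F$ successively shows that every map in the tower $F\to T_n F\to T_n^2 F\to\cdots$ is a $G$-equivalence after evaluation at $\RR^\infty$. Since sequential homotopy colimits preserve $G$-equivalences and commute with the filtered colimit defining evaluation at $\RR^\infty$, one concludes $F(\RR^\infty)\xrightarrow{\simeq} P_n F(\RR^\infty)$.

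For the condition on derivatives, the starting point is the $G$-fibration sequence $E^{(n+1)}(V)\to E(V)\to T_n E(V)$ established earlier, applied to an arbitrary $E\in\cE_0$. Applying the right adjoint $\ind_0^i$ (which preserves homotopy fibers) yields a fiber sequence $\ind_0^i\res_0^{n+1}E^{(n+1)}\to E^{(i)}\to (T_n E)^{(i)}$ in $\cE_i$. Two facts then suffice: (a)~for every $E\in\cE_0$ and $1\le i\le n$, the Theta spectrum of $\ind_0^i\res_0^{n+1}E^{(n+1)}$ is $G$-contractible, whence $\Th E^{(i)}\simeq \Th(T_n E)^{(i)}$; and (b)~the functor $\Th(-)^{(i)}$ commutes with the sequential hocolim defining $P_n F$. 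Iterating (a) up the tower $F\to T_n F\to T_n^2 F\to\cdots$ gives $\Th F^{(i)}\simeq \Th(T_n^k F)^{(i)}$ for all $k$, and (b) --- which is formal, since $\Th$ is itself a sequential colimit of finite-dimensional evaluations and filtered hocolims commute --- produces the equivalence $\Th F^{(i)}\simeq\Th(P_n F)^{(i)}$.

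The main obstacle is sub-claim (a). Unwinding $\ind_0^i\res_0^{n+1}E^{(n+1)}(V)=\nat_0(\mor_i(V,-),E^{(n+1)})$ via Yoneda and fibering through $\mor_{n+1}$, one must exploit the suspension structure $S^{(n+1)W_\CC}\wedge E^{(n+1)}(U)\to E^{(n+1)}(W\oplus U)$ coming from the $\cE_{n+1}$-enrichment: this makes $E^{(n+1)}$ highly $G$-connected in many transverse directions, so that natural transformations from $\mor_i(V,-)$ with $i\le n$ into $E^{(n+1)}$ become stably nullhomotopic. This is the equivariant analogue of the non-equivariant fact used in Weiss, and the principal work is checking that the connectivity estimates survive on $G$-fixed points --- a verification that, given the $G$-CW structure on each $\mor_i(V,W)$ arising from complex conjugation, proceeds by the same combinatorics on each fixed-point stratum.
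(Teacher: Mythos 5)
Your argument is essentially the paper's: the value-at-infinity condition comes from the preceding lemma that $p$ is a $G$-equivalence at infinity, the derivative condition comes from the $G$-fibration sequence $E^{(n+1)} \to E \to T_n E$ together with the $G$-contractibility of $\Th[\ind_0^i \res_0^{n+1} \ind_0^{n+1} E]$ for $1 \le i \le n$, and one then iterates up the tower $F \to T_n F \to T_n^2 F \to \cdots$ and passes to the sequential homotopy colimit defining $P_n F$, exactly as in the paper's three steps (the paper merely packages the iteration as an induction on commuting squares). The one substantive nuance is your sub-claim (a): the paper obtains this vanishing not from a connectivity estimate but from the nullhomotopy argument given later, namely that the structure maps in the colimit computing $\pi^H_* \Th$ of the restricted object factor through the inclusion $S^{i\rho} \hookrightarrow S^{(n+1)\rho}$, which is $G$-nullhomotopic because its complement contains a trivial summand.
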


\begin{proof}: We will proceed in three main steps.

1) We first establish that $F \to T_n F$ is an approximation of order $n$.

The previous lemma tells us that $p_*: E(\RR^\infty) \to (T_n E)(\RR^\infty)$ is a $G$-equivalence. Furthermore, we know that the homotopy fiber of $p: E \ T_n E$ is $E^{(n + 1)}$, and therefore the $G$-spectrum
$$\Th \hofiber(p)^{(i)} = \Th [ \ind^i_0 \res^{n + 1}_0 \ind^{n + 1}_0 E ]$$
is $G$-contractible for $1 \le i \le n$.

Thus, we have that $p_*: \Th E^{(i)} \to \Th (T_n E)^{(i)}$ is a $G$-equivalence for $1 \le i \le n$, so $p$ is an approximation of order $n$.

2) We claim that all maps in the diagram $F \xrightarrow p T_n F \xrightarrow{T_n p} T_n^2 F \to \cdots$ are approximations of order $n$.

Suppose for the sake of induction that we know this is true when the codomain is $T_n^k E$. Then, consider the diagram
$$\begin{array}{ccc}
T_n^k E & \xrightarrow{T_n^k p} & T_n^{k + 1} E \\
\downarrow p & & \downarrow p \\
T_n^{k + 1} E & \xrightarrow{T_n^{k + 1} p} & T_n^{k + 2} E
\end{array}$$

The previous step tells us that the vertical arrows are approximations of order $n$, and the inductive hypothesis tells us that the top horizontal arrow is as well. Thus, since plugging in $\RR^\infty$ and computing associated spectra (for degrees $1$ through $n$) give us analogous diagrams with every map a $G$-equivalence, we see that the bottom horizontal arrow is also an approximation of order $n$.

The previous step gives us our base case of $k = 1$, so we may conclude that each map is an approximation of order $n$.

3) From the previous step, for each map in the diagram defining $P_n E$, all of the induced maps on $\RR^\infty$ and the maps of associated spectra (for degrees $1$ through $n$) are $G$-equivalences, and therefore the maps
$$\begin{array}{llll}
\psi_*: E(\RR^\infty) \to (P_n E)(\RR^\infty) \\
\psi_*: \Th E^{(i)} \to \Th (P_n E)^{(i)}, &&& \text{for } 1 \le i \le n
\end{array}$$
are $G$-equivalences. We conclude map $F \to \hocolim_k T_n^k F = P_n F$ is an approximation of order $n$.
\end{proof}

\begin{cor}
For any $E \in \cE_0$, the map $P_n E \xrightarrow{P_n \psi} P_n P_n E$ is a $G$-equivalence.
\end{cor}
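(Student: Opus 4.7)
The strategy is to compare the map $P_n \psi_E$ with the other natural map $\psi_{P_n E}: P_n E \to P_n P_n E$ and deduce that $P_n \psi_E$ is a $G$-equivalence from the fact that $\psi_{P_n E}$ is. Naturality of $\psi: \id \to P_n$ applied to the morphism $\psi_E: E \to P_n E$ itself yields the commutative square
$$\begin{array}{ccc} E & \xrightarrow{\psi_E} & P_n E \\ \psi_E \downarrow & & \downarrow P_n \psi_E \\ P_n E & \xrightarrow{\psi_{P_n E}} & P_n P_n E. \end{array}$$
Since $P_n E$ is polynomial of degree $\le n$ by construction, the discussion at the beginning of this section already shows that the bottom map $\psi_{P_n E}$ is a $G$-equivalence.

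The preceding proposition asserts that $\psi_E$ is an approximation of order $n$, so it induces $G$-equivalences on evaluation at $\RR^\infty$ and on the associated derivative spectra $\Th E^{(i)}$ for $1 \le i \le n$. Applying any of these functors to the commutative square and cancelling the resulting equivalence induced by $\psi_E$ in the identity $\psi_{P_n E} \circ \psi_E = P_n \psi_E \circ \psi_E$, we obtain that $P_n \psi_E$ and $\psi_{P_n E}$ agree up to homotopy under each of these functors. Since $\psi_{P_n E}$ is already a $G$-equivalence, this forces $P_n \psi_E$ to be an approximation of order $n$ as well.

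Both the source $P_n E$ and the target $P_n P_n E$ of $P_n \psi_E$ are polynomial of degree $\le n$, so the proof is completed by the general principle that an approximation of order $n$ between two polynomial functors of degree $\le n$ must itself be a $G$-equivalence. I would prove this by induction on $n$ using the $G$-fibration sequence $F^{(n+1)}(V) \to F(V) \to T_n F(V)$ from the proposition in Section 3.1: a polynomial functor of degree $\le n$ is assembled from the tower $P_0 F \leftarrow P_1 F \leftarrow \cdots \leftarrow P_n F$ whose successive fibers are controlled by the spectra $\Th F^{(i)}$ for $1 \le i \le n$ together with the value at $\RR^\infty$. This classification step is the main obstacle: while it is standard in non-equivariant orthogonal calculus, the $G$-equivariant version requires checking that each stage of the tower can be reconstructed in a $G$-equivariant manner from the derivative data, which is a careful bookkeeping exercise along the lines of the template in \cite{Weiss}.
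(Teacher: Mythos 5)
Your naturality square and the three-out-of-four argument are precisely the paper's proof of this corollary: the paper draws the same square (with $\psi_E$ on the top and left, and the two maps $P_n E \to P_n P_n E$ on the right and bottom), observes that three of the arrows are approximations of order $n$, and concludes that the fourth, $P_n\psi_E$, is one as well. Your remark that $\psi_{P_n E}$ is even a $G$-equivalence because $P_n E$ is polynomial of degree $\le n$ is also exactly the paper's setup, and your ``cancellation'' step is just the two-out-of-three reasoning after applying $(-)(\RR^\infty)$ and $\Th(-)^{(i)}$, which is legitimate and is what the paper does implicitly. So the first half of your proposal reproduces the paper's argument.

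Where you go beyond the paper is the final upgrade from ``$P_n\psi_E$ is an approximation of order $n$'' to ``$P_n\psi_E$ is a $G$-equivalence.'' The paper's own proof simply stops at the approximation-of-order-$n$ conclusion, and the only criterion it supplies for such an upgrade is the corollary in the Behavior at Infinity section, which requires, in addition to both functors being polynomial of degree $\le n$, that the codomain be connected at infinity. Your ``general principle'' is asserted without any connectivity hypothesis and is not proved; you flag it yourself as the main obstacle, and the sketched induction over the tower does not engage with the $\pi_0$/component issues that the connected-at-infinity hypothesis exists to control (for an arbitrary $E \in \cE_0$ that hypothesis need not hold for $P_n P_n E$). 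So, measured against the paper, your proposal is the paper's proof plus an honest acknowledgement of a step the paper leaves implicit, but that step remains a gap as written. To close it you would either verify the hypotheses of the Behavior at Infinity criterion in this situation, or argue directly that both maps $P_n E \to P_n P_n E$ are $G$-equivalences by commuting $T_n$ (a homotopy limit over the compact-nerve category $\cC_n$, as in the ``equivalence at infinity'' lemma) past the filtered homotopy colimit defining $P_n$, identifying $P_n P_n E$ with the diagonal colimit $\hocolim_k T_n^k E \simeq P_n E$.
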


\begin{proof}Consider the diagram
$$\begin{array}{ccc}
E & \xrightarrow \psi & P_n E \\
\big \downarrow \psi & & \big \downarrow \psi \\
P_n E & \xrightarrow{P_n \psi} & P_n P_n E
\end{array}$$

Three of the arrows are approximations of order $n$, so we may conclude that the fourth is as well.
\end{proof}

\section{Behavior at Infinity}

Now that we have a nice collection of theorems about $G$-equivariant polynomial functors, we can say a bit more about how these functors behave. For starters, polynomial functors are determined by their behavior at infinity.

To make this more precise, if $E \in cE$ is polynomial of degree $\le n$, then the natural map $E \to P_n E = \hocolim_k T_n^k E$ is a $G$-equivalence. However, we can easily see that $T_n^k E$ depends only on the behavior of vector spaces of dimension at least $k |G|$ and morphisms between them. Thus, this homotopy colimit is ultimately determined by the behavior of $E$ on large representations of $G$.

Def: An object $E \in \cE$ is said to be $G$-connected at infinity if the object $E( \RR^\infty ) = E( \cU )$ is $G$-connected.

\begin{lem} Suppose that $E \in \cE_0$ is polynomial of degree $\le n$ and $G$-connected at infinity. For $V \in \cJ_0$, define $E_\flat (V)$ to be the base point component of $E(V)$. Then, $P_n E_\flat$ is $G$-equivalent to $E$. Thus, $E_\flat$ determines $E$, up to $G$-equivalence.
\end{lem}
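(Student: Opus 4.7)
The plan is to show that the natural inclusion $\iota \colon E_\flat \hookrightarrow E$ is an approximation of order $n$, and then apply the polynomial approximation functor $P_n$. Since $E$ is polynomial of degree $\le n$, the map $E \to P_n E$ is a $G$-equivalence, so it will suffice to show that the induced map $P_n E_\flat \to P_n E$ is a $G$-equivalence.

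First, I would verify that $\iota$ is an approximation of order $n$. On values at $\RR^\infty$, the hypothesis that $E(\RR^\infty)$ is $G$-connected forces its basepoint component (as a $G$-space) to be the whole space; since the basepoint component of the filtered colimit $E(\RR^\infty) = \hocolim_n E(\RR^n)$ agrees with the filtered colimit of basepoint components, we conclude that $E_\flat(\RR^\infty) = E(\RR^\infty)$. On derivatives, I would use the Yoneda description $E^{(i)}(V) = \nat_0(\mor_i(V, -), E)$. The space $\mor_i(V, W)$ is the Thom space of a $G$-equivariant vector bundle, and every point can be connected to the basepoint at infinity by radial scaling of its fiber coordinate; because scaling preserves all fixed-point subspaces, $\mor_i(V, W)^H$ is connected for every $H \le G$. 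Consequently, any $G$-equivariant pointed map from $\mor_i(V, W)$ factors through $E_\flat(W) \hookrightarrow E(W)$, giving a natural $G$-homeomorphism $E_\flat^{(i)}(V) \cong E^{(i)}(V)$ and hence a $G$-equivalence $\Th E_\flat^{(i)} \to \Th E^{(i)}$ for each $1 \le i \le n$.

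Next, I would apply $P_n$ and invoke the naturality square
$$\begin{array}{ccc}
E_\flat & \xrightarrow{\psi} & P_n E_\flat \\
\iota \big\downarrow & & \big\downarrow P_n \iota \\
E & \xrightarrow{\psi} & P_n E
\end{array}$$
whose horizontal arrows are approximations of order $n$ by the preceding proposition and whose left vertical arrow is one by the previous step. The two-out-of-three property for approximations of order $n$ (immediate from the definition, since $G$-equivalences satisfy two-out-of-three on the relevant values and spectra) then forces $P_n \iota$ to be an approximation of order $n$. Composing with the $G$-equivalence $\psi \colon E \xrightarrow{\sim} P_n E$, we obtain an approximation of order $n$ from $P_n E_\flat$ to $E$.

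The hard part will be the final step: promoting this approximation of order $n$ to a pointwise $G$-equivalence, using that both $P_n E_\flat$ and $E$ are polynomial of degree $\le n$. This is the same rigidity principle used implicitly in the preceding corollary, namely that a polynomial of degree $\le n$ functor is determined up to $G$-equivalence by its Taylor data—its value at $\RR^\infty$ together with the $G$-spectra $\Th F^{(i)}$ of its first $n$ derivatives. One way to handle this is to take the homotopy fiber of the approximation, which by the earlier corollary is itself polynomial of degree $\le n$ and has trivial Taylor data; an inductive argument on the polynomial degree using the fiber sequence $F^{(n+1)} \to F \to T_n F$ then shows such a functor must be $G$-contractible, finishing the proof.
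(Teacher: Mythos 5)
Your reduction to the final step hides the entire content of the lemma, and the sketch you give for that step does not close it. The rigidity statement you want to quote (an approximation of order $n$ between functors polynomial of degree $\le n$ with target connected at infinity is a $G$-equivalence) appears in the paper only \emph{after} this lemma, and its proof runs through the proposition on morphisms with pointwise contractible homotopy fibers, whose proof in turn invokes this very lemma; so citing it here is circular. Your proposed substitute --- take the homotopy fiber of the approximation, observe it is polynomial of degree $\le n$ with trivial Taylor data, and conclude it is $G$-contractible --- only controls the homotopy fiber over the basepoint, and that is not enough to conclude a $G$-equivalence. Indeed the inclusion $E_\flat(V) \hookrightarrow E(V)$ itself has contractible homotopy fiber over the basepoint without being an equivalence when $E(V)$ is disconnected: the whole point of the lemma is a $\pi_0$-statement, that applying $P_n$ to the basepoint component recovers the \emph{other} components of $E$. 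Any proof must use $G$-connectivity at infinity to show that every component of $E(V)$ is eventually absorbed into the basepoint component after stabilizing along $V \mapsto V \oplus \RR^k$. This is exactly what the paper's argument does: it observes that each $T_n^k E_\flat(V) \hookrightarrow T_n^k E(V)$ is an inclusion of a union of connected components, so the homotopy fibers of $P_n E_\flat(V) \to P_n E(V)$ are either empty or $G$-contractible, and then uses connectivity at infinity to show that no fiber is empty, i.e.\ every component of $P_n E(V)$ meets the image. Your proposal never engages with this surjectivity-on-components issue.

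There is also a flaw in your derivative computation. The claim that $\mor_i(V,W)^H$ is connected for every $H \le G$ fails when $\dim W = \dim V$: there the bundle $\gamma_i(V,W)$ has zero-dimensional fibers, so $\mor_i(V,W)$ is $\mor(V,W)_+$, and on $G$-fixed points the base is an orthogonal group, which is disconnected; the radial-scaling argument has no fiber coordinate to scale. So the asserted $G$-homeomorphism $E_\flat^{(i)}(V) \cong E^{(i)}(V)$ is not established by your argument (a natural transformation out of $\mor_i(V,-)$ need not visibly factor through $E_\flat$ at these small values of $W$). The statement that $\iota\colon E_\flat \to E$ is an approximation of order $n$ is true, but in the paper's logical order it is most easily obtained as a \emph{consequence} of this lemma (via $E_\flat \to P_n E_\flat \simeq E$), not as an input to it.
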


\begin{proof} Because $E$ is polynomial of degree $\le n$, the inclusion $E \to P_n E$ is a $G$-equivalence. We also have an inclusion $\iota: P_n E_\flat \to P_n E$, which we claim is a $G$-equivalence.

Because $E$ is connected at infinity, it is sufficient to show that the homotopy fiber of $\iota: P_n F_\flat(V) \to P_n F(V)$ is $G$-contractible for all $V \in cJ$. We already know that each such fibe must be either $G$-contractible or empty, as $T_n^k E_\flat (V) \hookrightarrow T_n^k E(V)$ is the inclusion of a union of connected components for all $k \ge 0$.

Next, we will show that no fiber is empty. Let $C$ be a connected component of $P_n F(V)$, and let $C' = C \cap F(V)$, which we know to be nonempty as the inclusion $F(V) \hookrightarrow P_n F(V)$ is a homotopy equivalence. We can choose a $k$ such that $C'$ maps to the base point component of $F(V \oplus \RR^k)$ via the inclusion induced map $F(V) to F(V \oplus \RR^k)$ because $F$ is $G$-connected at infinity. This tells us that the image of $C'$ in $T_n^k F(V)$ is contained in the image of $T_n^k F_\flat (V)$, so the image of $\iota$ has nonempty intersection with $C$. Thus, the homotopy fiber over $C$ is nonempty, and therefore contractible. The result follows.
\end{proof}

\begin{prop} Let $\eta: E \to F$ be a morphism of polynomial functors in $\cE_0$ such that the homotopy fiber of $\eta$ is contractible for all $V \in \cJ$. If $F$ is connected at infinity, then $\eta$ is a $G$-equivalence.
\end{prop}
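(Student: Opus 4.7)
The plan is to reduce the claim to the preceding lemma, by replacing $E$ and $F$ with the polynomial approximations of their basepoint components, and then using the $G$-contractible homotopy fibers to conclude a $G$-equivalence on basepoint components.

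First, I would verify that $E$ is $G$-connected at infinity. For each subgroup $H \le G$, the natural identification $(\hofiber \eta)^H \simeq \hofiber(\eta^H)$ gives that $\eta(\cU)^H : E(\cU)^H \to F(\cU)^H$ has contractible homotopy fiber. Combined with the connectivity of $F(\cU)^H$ (from $F$ being $G$-connected at infinity), the long exact sequence of a fibration forces $E(\cU)^H$ to be connected, so $E$ is $G$-connected at infinity. By the preceding lemma, $E \simeq_G P_n E_\flat$ and $F \simeq_G P_n F_\flat$, where $n$ is a common upper bound for their polynomial degrees, and $\eta$ restricts to $\eta_\flat : E_\flat \to F_\flat$ on basepoint components. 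It thus suffices to show $\eta_\flat$ is a $G$-equivalence.

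For each $V \in \cJ$, the spaces $E_\flat(V)$ and $F_\flat(V)$ are non-equivariantly connected, so the homotopy fiber of $\eta_\flat(V)$ at any point agrees (up to equivalence) with that of $\eta(V)$ at the basepoint, and is $G$-contractible. Passing to $H$-fixed points and applying the long exact sequence of a fibration, $\eta_\flat(V)^H$ is a weak equivalence onto every connected component of $F_\flat(V)^H$ that it meets. The remaining point is $\pi_0$-surjectivity on $H$-fixed points: given a component $C \subseteq F_\flat(V)^H$, its image in $F(\cU)^H$ lies in the unique connected component there, which is hit by $E(\cU)^H$ via the equivalence $\eta(\cU)^H$; this preimage can be traced back through the polynomial tower $T_n^k F$ to produce an $H$-fixed point of $E_\flat(V)$ mapping into $C$.

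The hardest step will be the $\pi_0$-surjectivity argument, since it requires leveraging the polynomial structure of $F$ together with the $G$-action on finite-dimensional $V$ to transport $H$-fixed components from the behavior at infinity down to finite vector spaces. Once this is verified for all $H \le G$, combining the long exact sequence input with surjectivity on $\pi_0^H$ yields that $\eta_\flat$ is a $G$-equivalence, and hence so is $\eta$.
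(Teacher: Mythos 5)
Your overall route is the same as the paper's: deduce that $E$ is $G$-connected at infinity from the contractible fiber at infinity, then invoke the preceding lemma ($E \simeq P_n E_\flat$, $F \simeq P_n F_\flat$) to reduce to the map $\eta_\flat$ on basepoint components. The gap is in your final step. First, the claim that the homotopy fiber of $\eta_\flat(V)$ over an arbitrary point of $F_\flat(V)$ ``is $G$-contractible'' does not follow from the hypothesis: homotopy fibers over two points in the same path component of $F(V)$ are identified by choosing a path between them, and for an $H$-fixed point $y$ lying in a different component of $F(V)^H$ than the basepoint such a path cannot be chosen $H$-fixed, so contractibility of the $H$-fixed fiber over the basepoint says nothing about the $H$-fixed fiber over $y$. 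Hence even ``a weak equivalence onto every component it meets'' is unjustified for the non-basepoint components of $F_\flat(V)^H$. Second, and more seriously, the proposed $\pi_0$-surjectivity argument cannot work as described: the maps in the tower go forward, $F(V) \to T_n F(V) \to T_n^2 F(V) \to \cdots$ and out to $F(\cU)$, so from a point of $E(\cU)^H$, or of $T_n^k E_\flat(V)^H$, there is no mechanism to ``trace back'' and produce an $H$-fixed point of $E_\flat(V)$ itself at the finite stage $V$; points cannot be lifted backwards along the tower, and a levelwise statement at $V$ is exactly as strong as the proposition you are trying to prove.

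The way this kind of issue is actually resolved in the paper is visible in the proof of the preceding lemma: one proves the equivalence only after applying $P_n$, i.e.\ one shows $P_n E_\flat(V) \to P_n F_\flat(V)$ (equivalently $E(V) \to F(V)$ after the identifications) is a $G$-equivalence by checking that every component of the homotopy colimit is met, and for that it suffices that a given component of $T_n^k F_\flat(V)^H$ maps forward, for $k$ large, into the basepoint component, using $G$-connectivity at infinity; no point of $E_\flat(V)$ at stage $V$ is ever produced. In fairness, the paper's own proof is terse at exactly this spot --- it simply asserts that the fiber condition makes $\eta_\flat$ a $G$-equivalence --- but your proposal makes the delicate step explicit and then fills it with an argument that would fail. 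Replace the ``trace back'' step with an argument at the level of $P_n$, modeled on the proof of the preceding lemma.
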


\begin{proof} The condition on $\eta$ means that $\eta_\flat: E_\flat \to F_\flat$ is a $G$-equivalence. Additionally, we know that $E$ must be connected at infinity, as the homotopy fiber of $\eta_*: E(\RR^\infty) \to F(\RR^\infty)$ is contractible and $F$ is connected at infinity. This means that $E, F$ are completely determined by $E_\flat, F_\flat$, so exhibiting the $G$-equivalence $\eta_\flat$ is sufficient.
\end{proof}

\begin{lem} If $E \in \cE$ is polynomial of degree $\le m$, then $T_n E$ is also polynomial of degree $\le m$, for any nonnegative integer $n$.
\end{lem}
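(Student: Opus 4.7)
The plan is to exploit the fact that $T_n$ and $T_m$ commute, being homotopy limits over finite diagrams of $G$-spaces, and reduce the claim to applying $T_n$ to the hypothesized $G$-equivalence $E \to T_m E$.

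First I would record that $T_n$ preserves $G$-equivalences: if $\eta \colon F \to F'$ is a $G$-equivalence in $\cE$, then $T_n \eta$ is levelwise a homotopy limit of the $G$-equivalences $\tilde{F}(V_\CC \oplus U) \to \tilde{F'}(V_\CC \oplus U)$ indexed by $U \in \cC_n$, so it is itself a $G$-equivalence. Next, since $E$ is polynomial of degree $\le m$, the natural map $\eta \colon E \to T_m E$ is a $G$-equivalence, and applying $T_n$ yields a $G$-equivalence $T_n \eta \colon T_n E \to T_n T_m E$.

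Second I would unwind the canonical lifts. Taking $\tilde{(T_n E)}(W) := \holim_{U \in \cC_n} \tilde{E}(W \oplus U)$ as the canonical lift of $T_n E$, and similarly for $T_m E$, we have
\[
T_n T_m E(V) \;=\; \holim_{U \in \cC_n} \holim_{U' \in \cC_m} \tilde{E}(V_\CC \oplus U \oplus U')
\]
and
\[
T_m T_n E(V) \;=\; \holim_{U' \in \cC_m} \holim_{U \in \cC_n} \tilde{E}(V_\CC \oplus U' \oplus U).
\]
Both are canonically $G$-homeomorphic to $\holim_{(U,U') \in \cC_n \times \cC_m} \tilde{E}(V_\CC \oplus U \oplus U')$ by Fubini for homotopy limits, giving a natural $G$-equivalence $\tau \colon T_n T_m E \xrightarrow{\simeq} T_m T_n E$.

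Finally I would verify that the natural map $p \colon T_n E \to T_m T_n E$ witnessing polynomiality agrees up to the equivalence $\tau$ with the composite $T_n E \xrightarrow{T_n \eta} T_n T_m E \xrightarrow{\tau} T_m T_n E$, since both are induced by the same structural inclusions $V_\CC \hookrightarrow V_\CC \oplus U \oplus U'$. Combining this with the $G$-equivalence from the second step yields that $p$ is a $G$-equivalence, so $T_n E$ is polynomial of degree $\le m$. The main obstacle is the Fubini bookkeeping in the third step, namely verifying compatibility of the canonical lifts with the swap $\tau$ and with the maps defining polynomiality; once that is in hand, the rest is formal.
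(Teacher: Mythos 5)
Your proposal is correct and follows essentially the same route as the paper: both identify the map $p\colon T_n E \to T_m T_n E$ with $T_n$ applied to the $G$-equivalence $E \to T_m E$ by interchanging the two homotopy limits over $\cC_n$ and $\cC_m$. Your version merely makes explicit the two points the paper leaves implicit, namely that $T_n$ preserves $G$-equivalences and that the Fubini interchange is compatible with the structural maps.
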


\begin{proof} Consider the canonical map
\begin{align*} p: T_n E(V) \to T_m T_n E(V) & = \holim_{W \in \cC_m} \holim_{U \in \cC_n} \tilde E(V_\CC \oplus U \oplus W) \\
& = \holim_{U \in \cC_n} \holim_{W \in \cC_m} \tilde E(V_\CC \oplus U \oplus W) \\
& = T_n T_m \tilde E(V).
\end{align*}
Thus, $p$ arises as $T_n$ of the map $E(V) \to T_m E(V)$, which we know to be a $G$-equivalence, and therefore $p$ is a $G$-equivalence. We conclude that $T_n E$ is polynomial of degree $\le m$.
\end{proof}

\begin{cor} Let $\eta: E \to F$ be a morphism in $\cE_0$ such that $E, F$ are polynomial of degree $\le n$, and $F$ is connected at infinity. Suppose that the maps
$$\begin{array}{llll}
\eta_*: E(\cU) \to F(\cU) \\
\eta_*: \Th E^{(i)} \to \Th F^{(i)} &&& for \, \, 1 \le i \le n
\end{array}$$
are $G$-homotopy equivalences. Then, $\eta$ is a $G$-equivalence.
\end{cor}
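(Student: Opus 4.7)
The plan is to apply the preceding proposition, which guarantees that a morphism of polynomial functors with $G$-contractible homotopy fiber at every $V$ and with target $G$-connected at infinity is itself a $G$-equivalence. It therefore suffices to prove that the homotopy fiber $H(V) := \hofiber(\eta_V)$ is $G$-contractible for every $V \in \cJ$.

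First I would record the basic properties of $H$. By the corollary to the lemma on fibers of polynomial functors, $H$ itself is polynomial of degree $\le n$. Since $\eta_*: E(\cU) \to F(\cU)$ is a $G$-equivalence, $H(\cU) \simeq *$, so $H$ is $G$-connected at infinity. Because $\ind$ is a right adjoint (and hence commutes with homotopy fibers), the derivatives $H^{(i)}$ are the homotopy fibers of $\eta^{(i)}: E^{(i)} \to F^{(i)}$; in particular, the hypothesis that $\Th \eta^{(i)}$ is a $G$-equivalence for $1 \le i \le n$ yields $\Th H^{(i)} \simeq *$ in the same range.

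I would then induct on $n$ to prove the following claim: any $H \in \cE_0$ which is polynomial of degree $\le n$, is $G$-connected at infinity, and has $\Th H^{(i)} \simeq *$ for $1 \le i \le n$ satisfies $H(V) \simeq *$ for every $V$. The base case $n = 0$ is direct: since $T_0 H(V) = H(V \oplus \RR)$, iterating the $G$-equivalence $H(V) \simeq T_0 H(V)$ and passing to the colimit gives $H(V) \simeq H(\cU) \simeq *$. For the inductive step, consider the $G$-fibration $L \to H \to P_{n-1} H$ with $L := \hofiber(H \to P_{n-1} H)$. Applying $P_{n-1}$ and invoking the corollary $P_{n-1} P_{n-1} H \simeq P_{n-1} H$ shows that $P_{n-1} L \simeq *$, so $L$ is $n$-homogeneous. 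The long exact sequence of derivatives identifies $\Th L^{(n)} \simeq \Th H^{(n)} \simeq *$, since $(P_{n-1} H)^{(n)}$ vanishes ($P_{n-1} H$ being polynomial of degree $\le n-1$). Invoking the equivariant classification of $n$-homogeneous polynomial functors by their $n$-th derivative spectrum (flagged in the introduction as carrying over from the non-equivariant setting) then forces $L \simeq *$, so $H \simeq P_{n-1} H$. The latter inherits the inductive hypothesis at level $n-1$ via the order-$(n-1)$ approximation $H \to P_{n-1} H$, yielding $P_{n-1} H(V) \simeq *$ as required.

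The main obstacle will be rigorously invoking the classification of $n$-homogeneous polynomial functors by their $n$-th derivative $G$-spectrum with $\U_n \rtimes G$-action. This is the equivariant analogue of a central theorem from Weiss's orthogonal calculus; once it is in hand, the induction closes cleanly and the corollary follows.
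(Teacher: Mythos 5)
Your opening reduction is exactly the paper's first move: invoke the preceding proposition (morphisms of polynomial functors with $G$-contractible homotopy fibers and target connected at infinity are $G$-equivalences), and the bookkeeping for the fiber $H$ is correct ($H$ polynomial of degree $\le n$ by the earlier corollary, $H(\cU)\simeq *$, and $\Th H^{(i)}\simeq *$ for $1\le i\le n$ since derivatives and $\Th$ commute with homotopy fibers). The genuine gap is the step you yourself flag as the main obstacle: concluding $L\simeq *$ by ``invoking the equivariant classification of $n$-homogeneous polynomial functors by their $n$-th derivative spectrum.'' That classification is nowhere established in this paper --- the only result in that direction is the worked example showing that functors of the form $\Omega^\infty[(S^{nV_\CC}\wedge\Th)_{h\U_n}]$ \emph{are} homogeneous, which is the converse of what you need --- and in Weiss's orthogonal calculus the classification theorem is proved downstream of, and using, precisely the corollary you are trying to prove. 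As written, the argument is therefore circular (or at least rests on an unproved equivariant input).

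The repair, which is also how the paper argues, is to use only the weaker fact quoted earlier in the homogeneity section: the top derivative of a functor polynomial of degree $\le k$ is stable, so $D^{(k)}(V)\simeq \Omega^\infty[S^{kV_\CC}\wedge\Th D^{(k)}]$, and contractibility of the spectrum $\Th D^{(k)}$ already forces $D^{(k)}$ to vanish. Since $D^{(k)}$ is the homotopy fiber of $D\to T_{k-1}D$, and $T_{k-1}D$ is connected at infinity (by the lemma on $T_q$ at infinity, because $D(\cU)\simeq *$), the contractible-fiber proposition upgrades this to: $D\to T_{k-1}D$ is a $G$-equivalence, i.e.\ $D$ is polynomial of degree $\le k-1$. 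Taking $k$ minimal and descending, one lands at degree $0$, where $D(V)\simeq D(\cU)\simeq *$. If you substitute this stability argument for your appeal to the classification, your induction through $P_{n-1}$ does close (the identifications $P_{n-1}L\simeq *$ and $\Th L^{(n)}\simeq\Th H^{(n)}$ are fine), but at that point the detour through homogeneity buys nothing: you have essentially reconstructed the paper's downward induction on the degree of the fiber.
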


\begin{proof} We proceed in a similar manner to the proof of the analogous statement in \cite{Weiss}. By the previous proposition, it is sufficient to show that $D = \hofiber(\eta)$ vanishes. By a previous lemma, we know that $D$ is polynomial of degree $\le m$, so $D$ is polynomial of degree $k$, for some integer $0 \le k \le m$. This means that $D^{(k)}$ is stable, so we have $D^{(k)} \simeq \Omega^\infty [ S^{k V} \wedge \Th D^{(k)} ]$. However, $\Th D^{(k)}$ is contractible by assumption, and therefore $D^{(k)}$ must vanish.

We know that we can also express $D^{(k)}$ as the homotopy fiber of the natural map $p: D \to T_{k - 1} D$, both of which are polynomial of degree $\le k$. Additionally, we know that $D(\RR^\infty) \hofiber [ E(\RR^\infty) \to F(\RR^\infty) ]$, which is contractible by assumption. By the previous lemma, this means that $T_q D$ is contractible at infinity as well, and therefore $p$ is a $G$-equivalence.

By definition, this means that $D$ is polynomial of degree $\le k - 1$, contradicting the minimality of $k$. From this, we may conclude that $D$ is polynomial of degree $0$, and therefore that $D$ is stable. This means that $D(V) \simeq D(\RR^\infty)$ for all $V \in \cJ$, and therefore is contractible for all $V \in \cJ$. We conclude that $\eta$ is a $G$-equivalence.
\end{proof}

From this, we obtain an important corollary.

\begin{cor} Let $E \in \cE$, and suppose that $\Th E^{(i)}$ is contractible for $1 \le i \le m$. Then, $P_m E (V)$ is contractible for all $V \in \cJ$.
\end{cor}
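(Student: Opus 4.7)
The plan is to derive this corollary from the previous one by applying it to the canonical morphism $\eta: P_m E \to *$, where $*$ denotes the constant functor at a point. The setup of the previous corollary is already in place: $P_m E$ is polynomial of degree $\le m$ as observed immediately after the construction of $P_n$, the constant functor $*$ is trivially polynomial of any degree, and $*$ is obviously $G$-connected at infinity.

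To verify the two equivalence hypotheses of the previous corollary, I would lean on the earlier proposition that the natural map $\psi: E \to P_m E$ is an approximation of order $m$. This provides natural $G$-equivalences $\Th (P_m E)^{(i)} \simeq \Th E^{(i)}$ for $1 \le i \le m$, and since $\Th E^{(i)}$ is $G$-contractible by the standing hypothesis while $\Th *^{(i)}$ is trivially contractible, the induced maps $\eta_*: \Th (P_m E)^{(i)} \to \Th *^{(i)}$ are $G$-equivalences between contractible objects. The same approximation fact identifies $(P_m E)(\cU) \simeq E(\cU)$, so that $\eta_*: (P_m E)(\cU) \to *$ is a $G$-equivalence precisely when $E(\cU)$ is $G$-contractible. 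Once both equivalence conditions are verified, the previous corollary forces $\eta$ to be a $G$-equivalence, yielding $(P_m E)(V) \simeq *$ for every $V \in \cJ$.

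The main obstacle is the verification at $\cU$: the hypothesis on the Thom spectra of derivatives controls only the stable behavior of $E$ and supplies no direct information about $E(\cU)$. The corollary should therefore be read under the standing convention of this section on behavior at infinity, namely that $E$ is $G$-connected at infinity; the vanishing of the Thom derivatives combined with that connectedness is what forces $E(\cU)$ to be $G$-contractible and permits the above application of the previous corollary to close the argument.
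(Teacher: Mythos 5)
Your reduction is the intended one: the paper offers no separate argument for this corollary, and it is meant to follow by applying the preceding corollary to the canonical map $\eta\colon P_m E \to *$, with the order-$m$ approximation $\psi\colon E \to P_m E$ supplying $\Th (P_m E)^{(i)} \simeq \Th E^{(i)} \simeq *$ for $1 \le i \le m$ and $(P_m E)(\cU) \simeq E(\cU)$, and with $P_m E$ polynomial of degree $\le m$ and $*$ polynomial and $G$-connected at infinity. You are also right to flag that the remaining hypothesis of that corollary, namely that $\eta_*\colon (P_m E)(\cU) \to *$ be a $G$-equivalence, does not follow from the stated assumptions; this hypothesis is in fact missing from the printed statement.

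The problem is your proposed repair: $G$-connectedness at infinity together with contractibility of $\Th E^{(i)}$ for $1 \le i \le m$ does \emph{not} force $E(\cU)$ to be $G$-contractible. Take $E$ to be the constant functor with value $S^2$ and trivial $G$-action. Every derivative $E^{(i)}$ is contractible (each is the homotopy fiber of an identity map, up to the structure maps), so all $\Th E^{(i)}$ vanish, and $E$ is $G$-connected at infinity; yet each $T_m E(V)$ is the homotopy limit of a constant diagram over a category with terminal object $\CC^{m+1}$, hence $T_m E \simeq E$ and $P_m E \simeq E$, which is not contractible. The derivatives only control the stable differences $E(V) \to E(V \oplus \RR)$; they carry no information about the limiting value $E(\cU)$ itself. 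The hypothesis that must be added is contractibility at infinity, i.e. that $E(\cU)$ is $G$-contractible, and with that in place your application of the preceding corollary does close the argument. This is also consistent with how the statement is used later in the paper: for $F(\tilde V) = \Omega^\infty [(S^{nV} \wedge \Th)_{h\U_n}]$ the value at infinity is $\Omega^\infty$ of a colimit whose connectivity tends to infinity, hence $G$-contractible. So the difficulty you identified is a genuine defect of the statement, but connectedness at infinity alone cannot fill it.
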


This will be useful in assessing whether a given polynomial functor is homogeneous, as we shall see in the next section.

\section{Homogeneous Polynomial Functors}

In \cite{Weiss}, a homogeneous polynomial functor is defined as a functor $F$ which is polynomial of degree $n$ such that $P_{n-1} F$ is contractible. Intuitively, we can think of homogeneous polynomial functors as the basic building blocks of polynomial functors in the same way that homogeneous polynomials over a field are the basic building blocks of polynomials over that field. In fact, 

We already know that non-equivariantly, if $F$ is a polynomial functor of degree $n$, then $F^{(n)}$ is stable and symmetric.

Let us now look at an example base on Example 5.7 from \cite{Weiss}.

\begin{eg} Consider the functor $F: \cJ_0 \to G-\Top_*$, defined by
$$F(\tilde V) := \Omega^\infty [ (S^{n V} \wedge \Th)_{h \U_n} ],$$
where $V = \tilde V_\CC$ for the sake of notational simplicity, and $\Th$ is a $G$-spectrum that additionally has an action of $\U_n$ compatible with this. By this, we mean that the following diagram commutes: $$\begin{array}{ccc}
(\U_n)_+ \wedge \Th & \xrightarrow{\mu} & \Th \\
\downarrow{g} & & \downarrow{g} \\
(\U_n)_+ \wedge \Th & \xrightarrow{\mu} & \Th
\end{array}$$ where $g$ is the nontrivial element of $G$.
\end{eg}

As in the referenced example, we shall show that $F^{(n+1)}$ vanishes by identifying the sequence of derivatives $F^{(n)} \xrightarrow u F^{(n-1)} \xrightarrow u \cdots \xrightarrow u F^{(1)} \xrightarrow u F$ (where the maps come from the homotopy fibration $F^{(i + 1)}(\tilde V) \xrightarrow u F^i(\tilde V) \to \Omega^{i \rho} F^i(\tilde V \oplus \RR)$) with another sequence, $F[n] \hookrightarrow F[n-1] \hookrightarrow \cdots \hookrightarrow F[1] \hookrightarrow F$, where $F[i](\tilde V) := \Omega^\infty [ (S^{n V} \wedge \Th)_{h \U_{n - i}} ]$ where $\U_{n - i} < \U_n$ is the subgroup fixing the first $i$ coordinates. In doing so, we also gain some insight into what taking each derivative of a homogeneous polynomial functor actually does.

We first observe that $F[i] \in \cE_i$, as we have a natural transformation
$$\sigma: S^{i V} \wedge F[i](\tilde W) \to F[i](\tilde V \oplus \tilde W)$$
given by the inclusion map
\begin{align*}
S^{i V} \wedge \Omega^\infty [ (S^{n W} \wedge \Th)_{h \U_{n - i}} ] & \hookrightarrow \Omega^\infty [ S^{i V} \wedge (S^{n W} \wedge \Th)_{h \U_{n - i}} ] \\
& \xrightarrow{=} \Omega^\infty [ (S^{i V} \wedge S^{n W} \wedge \Th)_{h \U_{n - i}} ] \\
& \hookrightarrow \Omega^\infty [ (S^{n V} \wedge S^{n W} \wedge \Th)_{h \U_{n - i}} ].
\end{align*}
It is also clear that this map is $G$-equivariant.

It is clear that $F[n]$ is a stable object in $\cE_n$, as the natural map
$$\sigma^{ad}: \Omega^\infty (S^{n W} \wedge \Th) \to \Omega^V\Omega^{\infty} (S^{n (V \oplus W)} \wedge \Th) = \Omega^{\infty} \Omega^V \Sigma^V (S^{n W} \wedge \Th)$$
is a $G$-homotopy equivalence (by the definition of $\Omega^\infty$). Thus, it is clear that $F[n]^{(1)}(\tilde V)$ is contractible for all $\tilde V \in \cJ$, so it will be sufficient to identify $F[n]$ with $F^{(n)}$.

The inclusion $F[i + 1] \hookrightarrow F[i]$ is really a map in $\cE_i$, $\res_i^{i+1} F[i + 1] \to F[i]$, which has an adjoint in $\cE_{i + 1}$, $F[i + 1] \to \ind_i^{i + 1} F[i] = F^{(1)}[i]$. We wish to show that this map is a canonical $G$-homotopy equivalence, as this would tell us that the map $F[i + 1] \hookrightarrow F[i]$ can be identified with $u: F^{(i + 1)} \to F^i$.

We know that
\begin{align*}
F[i]^{(1)}(\tilde V) & := \hofiber [ F[i](\tilde V) \to \Omega^{i \rho} F[i](\tilde V \oplus \RR) ] \\
& = \hofiber [ \Omega^\infty (S^{n V} \wedge \Th)_{h \U_{n - i}} \to \Omega^{\infty + i \rho} (S^{n (V \oplus \rho)} \wedge \Th)_{h \U_{n - i}} ].
\end{align*}
Because this is an infinite loop map, this is $\Omega^\infty$ of the homotopy fiber of
\begin{align*}
(S^{n V} \wedge \Th)_{h \U_{n - i}} \to \Omega^{i \rho} (S^{n (V \oplus \rho)} \wedge \Th)_{h \U_{n - i}} & \cong \Omega^{i \rho} (S^{i \rho} \wedge S^{(n - i) \rho} \wedge S^{n (V)} \wedge \Th)_{h \U_{n - i}} \\
& \cong \Omega^{i \rho} \Sigma^{i \rho} (S^{(n - i) \rho} \wedge S^{n (V)} \wedge \Th)_{h \U_{n - i}}.
\end{align*}

Since $\Omega^\infty$ of the last term is equal to $\Omega^\infty (S^{(n - i) \rho} \wedge S^{n (V)} \wedge \Th)_{h \U_{n - i}}$, we see that $F[i]^{(1)}(\tilde V)$ is given by $\Omega^\infty$ of the homotopy fiber of
$$(S^{n (V)} \wedge \Th)_{h \U_{n - i}} \to (S^{(n - i) \rho} \wedge S^{n (V)} \wedge \Th)_{h \U_{n - i}}.$$

This map arises from taking $\U_{n - i}$ homotopy orbits of the $\U_{n - i} \rtimes G$-equivariant map $\varphi: S^{n (V)} \wedge \Th \to S^{(n - i) \rho} \wedge S^{n (V)} \wedge \Th$. Furthermore, $\varphi = \iota \wedge \id_{S^{n V} \wedge \Th}$, where $\iota: S^0 \to S^{(n - i) \rho}$, which has homotopy fiber equal to $S^{(n - i) \rho - 1}_+$. This tells us that $\varphi$ has homotopy fiber $(S^{(n - i) \rho - 1}_+) \wedge S^{n V} \wedge \Theta$, so our homotopy fiber of interest is $\Omega^\infty [ ( (S^{(n - i) \rho - 1}_+) \wedge S^{n V} \wedge \Th )_{\U_{n - i}} ] \cong \Omega^\infty [ ( S^{n V} \wedge \Th )_{\U_{n - i - 1}} ] = F[i + 1](\tilde V)$. It is clear that all of these $G$-homotopy equivalences are natural, and therefore $F[i + 1](\tilde V) \to F[i](\tilde V) \to \Omega^{i \rho} F[i](\tilde V \oplus \RR)$ is a $G$-homotopy fibration.

Thus, we can conclude that $F^{(n)}(\tilde V)$ is stable, and therefore that $F$ is polynomial of degree $\le n$.

That $F$ is also homogeneous (of degree $n$) follows from computing the associated spectra of $F^{(i)}$ for each $i$. This is almost exactly as done in \cite{Weiss}. Firstly, it is clear that $\Th (F^{(i)}) = *$ for all $i > n$, and by the stability calculation, that $\Th ( F^{(n)}) ) = \Th (F[n]) = \Th$. Finally, for $i < n$, we have $\Th ( F^{(i)} ) \simeq \Th (F[i]) \simeq [\Th (\res_i^n F[n])]_{h \U_{n - i}}$. We claim that $\Th ( \res_i^n D )$ is contractible for any $D \in \cE_n$, from which it would follow that $\Th (F^{(i)})$ is contractible.

To see this, consider $\pi_V^H (\res_i^n D) = \hocolim_k [S^{k i \rho} \wedge S^V, D(k \RR)]^H$, where the maps in the diagram are given by
\begin{align*}
[S^{k i \rho} \wedge S^V, D(k \RR)]^H & \to [S^{i \rho} \wedge S^{k i \rho} \wedge S^V, S^{i \rho} \wedge D(k \RR)]^H \\
& \xrightarrow{\iota} [S^{(k + 1) i \rho} \wedge S^V, S^{n \rho} D(k \RR)]^H \\
& \xrightarrow{\sigma} [S^{(k + 1) i \rho} \wedge S^V, D((k + 1) \RR)]^H,
\end{align*}
where the last map is the structure map of $D$ as an element of $\mathcal E_n$. It is evident that $\iota$ is $G$-nullhomotopic, as the inclusion $S^{i \rho} \hookrightarrow S^{n \rho}$ is.

Now, consider the trivial map of functors given by $P_{n - 1} F \to *$. It is obvious that the target is $G$-connected at infinity, and that both functors are polynomial of degree $\le n$. By the above computation, we have a $G$-equivalence $\Th^{(i)} (P_{n - 1} F) \to \Th^{(i)} * = *$ for $1 \le i \le n - 1$. Thus, we may conclude that this map is an equivalence, and therefore that $P_{n - 1} F$ is $G$-contractible.

This tells us that $F$ is homogeneous polynomial of degree $n$, and therefore is a good candidate for the building blocks of our $G$-equivariant splitting theorem.

\chapter{Algebraic Loop Spaces}

\section{The Unitary Loop Group}

The main motivation for developing the above equivariant functor calculus was to assist us in studying the loop group $\Omega \U(V)$ as well as $\Omega \U(V; W)$, where $\U(V; W)$ is the unitary Stiefel manifold $\U(V \oplus W) / \U(W)$ for complex vector spaces $V, W$. However, this machinery does little for us directly. Instead, we seek to replace these objects with $G$-equivalent algebraic models that are easier to work with in this framework. In particular, we will develop a model for $\Omega \U(V)$ as an infinite dimensional Grassmannian. We should note that throughout this paper, because we would like to take into account the differentiable structure of the smooth manifold $Z$ when considering its loop space, we will take $\Omega Z$ to mean the space of smooth based maps $S^1 \to Z$.

First, we will need to say about about $\Omega \U(V)$ as a $G$-space. We will take the $G$-action here to be the one that acts on both $S^1$ and $\U(V)$ via complex conjugation. That is, if $\gamma \in \Omega \U(V; W)$, $z \in S^1$ and $\zeta \in G$ is the non-unit element, then we have $\zeta(\gamma)(z) := \gamma(z^*)^*$. It is clear that $\U(W) \to \U(V \oplus W) \to \U(V; W)$ is a $G$-fibration sequence, from which it follows that $\Omega \U(W) \to \Omega \U(V \oplus W) \to \Omega \U(V; W)$ is a $G$-fibration sequence. We have the following lemma that will tell us more about the fixed points of these loop spaces

\begin{lem}
Let $X$ be any smooth $G$-manifold, and consider $\Omega X$ to be a $G$-space with the above action. Then, the fixed point space $(\Omega X)^G$ is given by the space of paths $\gamma: I \to X$ such that $\gamma(0) = *$ and $\gamma(1) \in X^G$. We shall denote this space as $\Omega (X, X^G)$.
\end{lem}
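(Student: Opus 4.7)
The plan is to exhibit a $G$-equivariant homeomorphism between $(\Omega X)^G$ and the space $\Omega(X, X^G)$ of based paths ending in the fixed subspace. The key observation is that the $G$-action $\zeta(\gamma)(z) = \zeta \cdot \gamma(z^*)$ is built out of two involutions: complex conjugation on the circle (whose fixed points are $\pm 1$) and the $G$-action on $X$. A fixed loop must satisfy $\gamma(z^*) = \zeta \cdot \gamma(z)$ for every $z \in S^1$, so its value at each fixed point of conjugation on $S^1$ automatically lies in $X^G$. Since the basepoint of $S^1$ is $1$ and of $X$ is $*$ (assumed $G$-fixed), this condition is vacuous at $z = 1$ but forces $\gamma(-1) \in X^G$.

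First I would fix the parametrization. Identify the upper semicircle $\{e^{i \pi t} : t \in [0, 1]\}$ with $I$ via $t \mapsto e^{i \pi t}$, so that $0$ and $1$ correspond to the two fixed points $1, -1 \in S^1$. Given a $G$-fixed loop $\gamma$, define $\Phi(\gamma) : I \to X$ by $\Phi(\gamma)(t) = \gamma(e^{i \pi t})$. By the previous paragraph, $\Phi(\gamma)(0) = *$ and $\Phi(\gamma)(1) \in X^G$, so $\Phi(\gamma) \in \Omega(X, X^G)$.

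Next I would construct the inverse. Given $\alpha \in \Omega(X, X^G)$, define $\Psi(\alpha) : S^1 \to X$ by $\Psi(\alpha)(e^{i \pi t}) = \alpha(t)$ for $t \in [0, 1]$ and $\Psi(\alpha)(e^{-i \pi t}) = \zeta \cdot \alpha(t)$ for $t \in [0, 1]$. The two formulas agree at $t = 0$ because $\alpha(0) = *$ is $G$-fixed, and at $t = 1$ because $\alpha(1) \in X^G$. By construction $\Psi(\alpha)(z^*) = \zeta \cdot \Psi(\alpha)(z)$, so $\Psi(\alpha) \in (\Omega X)^G$, and $\Phi$, $\Psi$ are manifestly mutually inverse and continuous for the compact-open topology.

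The main obstacle is smoothness: since the author defines $\Omega X$ using smooth based maps, I need $\Psi(\alpha)$ to be smooth at the two junction points $z = \pm 1$ whenever $\alpha$ is (and conversely for $\Phi$). At a fixed point of $S^1$ under conjugation, the two semicircular branches of $\Psi(\alpha)$ meet tangentially; using a chart around $\alpha(0) = *$ (respectively around $\alpha(1) \in X^G$) in which the $G$-action is linear, one sees that the even-order derivatives of the parametrized path must land in the $+1$ eigenspace and the odd-order derivatives in the $-1$ eigenspace of the differential of $\zeta$ in order for the glued loop to be smooth. Rather than impose these as extra conditions on $\Omega(X, X^G)$, the standard remedy is to reparametrize $\alpha$ near the endpoints so that all derivatives of sufficiently high order vanish there (for example by composing with a smooth function $I \to I$ that is flat at $0$ and $1$); this reparametrization is a $G$-equivariant homotopy equivalence and ensures that $\Psi(\alpha)$ is smooth. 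With this caveat, $\Phi$ and $\Psi$ establish the claimed identification.
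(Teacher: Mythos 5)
Your argument is essentially the paper's own: a fixed loop satisfies $\gamma(z^*)=\zeta\cdot\gamma(z)$, hence is determined by its restriction to the upper semicircle, which is a path from $*$ to a point of $X^G$ since $\gamma(-1)$ must be $G$-fixed, and conversely a path extends to a fixed loop by reflecting through the $G$-action. The only difference is that you also flag the smoothness mismatch at $z=\pm 1$ (which the paper's proof passes over silently); your reparametrization remark handles it, at the cost of the identification being a $G$-homotopy equivalence rather than a literal homeomorphism, which is all that is needed in the sequel.
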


\begin{proof}
Let $\gamma \in (\Omega X)^G$. This means that $g(\gamma)(z) = g \circ \gamma (gz) = \gamma(z)$, so $\overline{\gamma(\overline z)} = \gamma(z)$. This means that $\gamma$ is completely determined by its values on the upper half of $S^1$, and $\overline{gamma(-1)} = \gamma(-1)$. Thus, the elements of $(\Omega X)^G$ are in bijection with paths in $X$ that start at the base point and end at an element of $X^G$, which is precisely the space $\Omega (X, X^G)$.
\end{proof}

\begin{cor}
The fixed point space $(\Omega \U(V))^G$ is the path space $\Omega (\U(V),  \Or(V_\RR))$, where $V_\RR = V^G$, the trivial representation $W$ such that $V = W_\CC$. Furthermore, the fixed point space $(\Omega \U(V; W))^G$ is the path space $\Omega (\U(V; W), \Or(V_\RR; W_\RR))$, where
$$\Or(K; L) := \Or(K \oplus L) / \Or(L)$$
for real vector spaces $K, L$.
\end{cor}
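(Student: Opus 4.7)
The plan is to apply the preceding lemma to $X = \U(V)$ and to $X = \U(V;W)$; both are smooth $G$-manifolds equipped with the complex-conjugation action just described, so the lemma immediately gives $(\Omega \U(V))^G = \Omega(\U(V),\, \U(V)^G)$ and $(\Omega \U(V;W))^G = \Omega(\U(V;W),\, \U(V;W)^G)$. The corollary thus reduces to the two elementary identifications $\U(V)^G = \Or(V_\RR)$ and $\U(V;W)^G = \Or(V_\RR; W_\RR)$, which I will verify separately.

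For the first identification, an element $A \in \U(V)$ is $G$-fixed precisely when it commutes with complex conjugation on $V = V_\RR \otimes_\RR \CC$; equivalently, $A$ preserves the real form $V_\RR = V^G$ and restricts there to a real inner-product-preserving transformation. This is exactly $\Or(V_\RR)$, and the identification is a $G$-equivariant isomorphism of smooth manifolds (indeed the $G$-action on the target is trivial, as it should be on the fixed set).

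For the second identification, I would prefer not to work directly with the coset presentation but instead identify $\U(V;W) = \U(V \oplus W)/\U(W)$ with the space of complex isometric embeddings $\varphi: V \hookrightarrow V \oplus W$. This identification is $G$-equivariant, where the $G$-action on the space of embeddings is given by the pointwise formula sending $\varphi$ to $\bar\varphi$, defined via the canonical real structures on $V$ and on $V \oplus W$. An embedding $\varphi$ is $G$-fixed iff it commutes with the conjugations on domain and codomain, iff it takes $V_\RR$ into $V_\RR \oplus W_\RR$ by a real inner-product-preserving map; the space of such real isometric embeddings is, by the defining presentation, $\Or(V_\RR \oplus W_\RR)/\Or(W_\RR) = \Or(V_\RR; W_\RR)$.

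The only subtle point I anticipate is that one might be tempted to compute $(\U(V \oplus W)/\U(W))^G$ directly from the coset description, but $G$-fixed points do not in general commute with quotients, so this route requires extra work (say an averaging or slice argument to produce a real representative in every $G$-fixed coset). Routing through the embedding description circumvents this issue entirely, because there the $G$-action is given by a pointwise formula and the fixed set can be read off. Combining the two fixed-point computations with the output of the preceding lemma then yields both halves of the corollary.
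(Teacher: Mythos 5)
Your proposal is correct and follows the same route as the paper: invoke the preceding lemma to get $(\Omega X)^G = \Omega(X, X^G)$ and then identify $\U(V)^G = \Or(V_\RR)$ and $\U(V;W)^G = \Or(V_\RR; W_\RR)$, which the paper simply declares to be clear. Your extra care on the second identification --- passing to the model of $\U(V;W)$ as isometric embeddings $V \hookrightarrow V \oplus W$ so that the $G$-fixed set can be read off pointwise, rather than computing fixed points of a coset space directly --- is a legitimate and welcome filling-in of the step the paper leaves unjustified, not a different method.
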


\begin{proof}
It is clear that $\U(V)^G = \Or(V_\RR)$, and similarly, $\U(V; W)^G = \Or(V_RR; W_\RR)$. The result follows.
\end{proof}

When $\U(V; W)$ has the additional structure of a group (with $\Or(V_\RR; W_\RR)$ as a subgroup), as is the case when $W = 0$ or $W = \CC$, then we may write this path space in another form. We have the following corollary about how to do this.

\begin{cor}
When $W$ has dimension $0$ or $1$, then we have $$(\Omega \U(V; W))^G = \Omega (\U(V; W) / \Or(V_\RR; W_\RR)).$$
\end{cor}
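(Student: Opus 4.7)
The plan is to combine the previous corollary with the standard identification, in the homogeneous-space setting, of the loop space of a quotient with the space of paths into the subgroup.

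First I would dispense with the group-theoretic setup in the two cases. When $\dim W = 0$ we have $\U(V;W) = \U(V)$ and $\Or(V_\RR;W_\RR) = \Or(V_\RR)$, which are literally a Lie group and a closed subgroup. When $W = \CC$, the composite $\SU(V\oplus\CC) \hookrightarrow \U(V\oplus\CC) \twoheadrightarrow \U(V\oplus\CC)/\U(\CC) = \U(V;\CC)$ is a homeomorphism of Lie groups (as already noted in the introduction), since every element of $\U(V\oplus\CC)$ admits a unique representative in $\SU(V\oplus\CC)$ modulo the $\U(\CC)$-action scaling the determinant. Under this identification, the closed subgroup $\Or(V_\RR;\RR)$ corresponds to $\SO(V_\RR\oplus\RR) \subset \SU(V\oplus\CC)$. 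In either case we obtain a principal bundle
$$\Or(V_\RR;W_\RR) \to \U(V;W) \to \U(V;W)/\Or(V_\RR;W_\RR).$$

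Next I would invoke the extended Puppe fiber sequence associated to this principal bundle, which yields a homotopy fibration
$$\Omega\bigl(\U(V;W)/\Or(V_\RR;W_\RR)\bigr) \to \Or(V_\RR;W_\RR) \to \U(V;W).$$
The homotopy fiber of the inclusion $\Or(V_\RR;W_\RR) \hookrightarrow \U(V;W)$ at the identity is by definition the space of pairs $(h,\gamma)$ with $h \in \Or(V_\RR;W_\RR)$ and $\gamma$ a path in $\U(V;W)$ from $h$ to $e$; using left translation by $h^{-1}$ (which preserves the subgroup) this is naturally homeomorphic to the space of paths from $e$ to a point of $\Or(V_\RR;W_\RR)$, i.e.\ to $\Omega(\U(V;W),\Or(V_\RR;W_\RR))$. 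Combined with the fibration above, this gives a natural weak equivalence
$$\Omega\bigl(\U(V;W)/\Or(V_\RR;W_\RR)\bigr) \simeq \Omega(\U(V;W),\Or(V_\RR;W_\RR)).$$

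Finally, combining this with the previous corollary, which already identifies $(\Omega\U(V;W))^G$ with $\Omega(\U(V;W),\Or(V_\RR;W_\RR))$, yields the asserted identification. The only step requiring genuine care is the compatibility check in the $W=\CC$ case, namely that the identification of $\U(V;\CC)$ with $\SU(V\oplus\CC)$ carries the fixed subgroup $\Or(V_\RR;\RR)$ to $\SO(V_\RR\oplus\RR)$ as subgroups; everything else is a routine application of the path-loop fibration for homogeneous spaces, so I expect no serious obstacle.
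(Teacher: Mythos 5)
Your argument is correct and follows essentially the same route as the paper: identify $\Omega(\U(V;W),\Or(V_\RR;W_\RR))$ as the homotopy fiber of the subgroup inclusion, use the fibration $\Or(V_\RR;W_\RR) \to \U(V;W) \to \U(V;W)/\Or(V_\RR;W_\RR)$ extended one step to the left (handling $W=\CC$ via the identification $\U(V;\CC)\cong\SU(V\oplus\CC)$ carrying $\Or(V_\RR;\RR)$ to $\SO(V_\RR\oplus\RR)$), and then combine with the preceding corollary on fixed points. Your extra care with the left-translation identification of the homotopy fiber and the compatibility of subgroups in the $W=\CC$ case only makes explicit what the paper leaves implicit.
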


\begin{proof}
By definition, the space $\Omega (\U(V; W), \Or(V_\RR; W_\RR))$ is the homotopy fiber of the inclusion $\Or(V_\RR; W_\RR) \hookrightarrow \U(V; W)$.

When $W = 0$, this is simply the inclusion $\Or(V_\RR) \hookrightarrow \U(V)$, which is part of the fibration sequence
$$\Or(V_RR) \to \U(V) \to \U(V) / \Or(V_\RR).$$
Extending this sequence to the left gives us the fibration sequence
$$\Omega (\U(V) / \Or(V_\RR)) \to \Or(V_\RR) \to \U(V),$$
so we see that we have $\Omega (\U(V), \Or(V_\RR)) = \Omega (\U(V) / \Or(V_\RR))$.

When $\dim W = 1$, we have $W \cong \CC$. In this case, we may make the identification $\U(V; \CC) = \SU(V \oplus \CC)$ preserving the subspace of real points $\Or(V_\RR; \RR) = \SO(V_\RR \oplus \RR)$. Rewriting $V \oplus \CC$ as $V'$, we have a fibration sequence
$$\SO(V'_\RR) \to \SU(V') \to \SU(V') / \SO(V'_\RR)$$
as before. Extending this to the left gives us the fibration sequence
$$\Omega (\SU(V') / \SO(V'_\RR)) \to \SO(V'_\RR) \to \SU(V'),$$
so we make the identification $\Omega (\SU(V'), \SO(V'_\RR)) = \Omega (\SU(V') / \SO(V'_\RR))$.

In both cases, we have $(\Omega \U(V: W))^G = \Omega ( \U(V; W) / \Or(V_\RR; W_\RR) )$.
\end{proof}

We now recall the matrix polar decomposition; we can write any matrix $A \in \CC^{n \times n}$ as $U R$, where $U \in \U_n$ and $R$ is a positive semi-definite Hermitian matrix. If $A$ is invertible, that is $A \in \GL_n(\CC)$, then $R$ is positive definite, and therefore the decomposition is unique (if we have two decompositions $U R = U' R'$, then $U^{-1} U' = R R'^{-1}$, so both sides must be the identity). This gives us a $G$-homeomorphism $\GL_n(\CC) \cong \U_n \times \R_n$, where $\R_n$ is the set of $n \times n$ positive definite Hermitian matrices, because complex conjugation distributes across products.

\begin{lem}
The space of positive definite Hermitian matrices is $G$-contractible.
\end{lem}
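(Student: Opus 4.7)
The plan is to exhibit an explicit $G$-equivariant strong deformation retraction of $\R_n$ onto the single point $I \in \R_n$, which is fixed by $G$ since $\overline{I} = I$. This will suffice to establish $G$-contractibility (equivalently, both $\R_n$ and the fixed-point subspace $\R_n^G$ will be shown to be contractible via the same formula).

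The contraction I would write down is the straight-line homotopy
$$H \colon \R_n \times [0,1] \to \R_n, \qquad H_t(A) := (1 - t) A + t I.$$
Three things need to be checked. First, $H_t(A)$ lies in $\R_n$ for every $t \in [0,1]$: it is Hermitian because it is a real-linear combination of the Hermitian matrices $A$ and $I$, and it is positive definite because positive definiteness is preserved under convex combinations (for any nonzero vector $v$, $v^*((1-t)A + tI)v = (1-t) v^* A v + t \|v\|^2 > 0$). Second, $H_0 = \id_{\R_n}$ and $H_1$ is the constant map at $I$. Third, $H_t$ is $G$-equivariant: since $G$ acts by entry-wise complex conjugation and the coefficients $1 - t, t$ are real,
$$\overline{H_t(A)} = (1-t) \overline{A} + t \overline{I} = H_t(\overline{A}).$$

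No step here is genuinely difficult; the only thing to be careful about is making sure the basepoint of the contraction is a $G$-fixed point of $\R_n$, which is why $I$ is chosen rather than some arbitrary positive definite Hermitian matrix. On restricting $H$ to the fixed-point subspace $\R_n^G$ (the positive definite real symmetric matrices), the same formula gives a contraction of $\R_n^G$ to $I$, so both $\R_n$ and $\R_n^G$ are contractible, which is what is required for $G$-contractibility in the sense used throughout this paper. This gives the lemma.
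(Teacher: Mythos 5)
Your proof is correct, but it takes a different route from the paper. You use the convexity of the set of positive definite Hermitian matrices: the straight-line homotopy $H_t(A) = (1-t)A + tI$ stays inside $\R_n$, is $G$-equivariant because the coefficients are real and conjugation is real-linear, and contracts onto the $G$-fixed basepoint $I$; restricting to $\R_n^G$ (positive definite real symmetric matrices) gives the contraction of the fixed points for free. The paper instead proves the lemma by showing that the exponential map $\exp\colon \fr_n \to \R_n$ from the real vector space of Hermitian matrices is a $G$-homeomorphism (via unitary, respectively orthogonal, diagonalization on the underlying space and on fixed points), and then deduces $G$-contractibility from that of the vector space $\fr_n$. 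Your argument is more elementary and shorter, since it avoids the diagonalization and the injectivity/surjectivity discussion needed to see that $\exp$ is a homeomorphism; the paper's argument yields the stronger statement that $\R_n$ is equivariantly homeomorphic to a $G$-vector space, but for the purposes of the lemma only contractibility is used, so your approach fully suffices. One small point worth making explicit in your write-up is that an equivariant deformation retraction onto a $G$-fixed point gives contractibility of every fixed-point space simultaneously, which for $G = \ZZ/2\ZZ$ is exactly the underlying space and the $G$-fixed points, as you note.
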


\begin{proof}
The Lie algebra $\fr_n$ is the vector space of all $n \times n$ Hermitian matrices. We know that every Hermitian matrix is diagonalizable by unitary matrices, so if $A \in \fr_n$, $\exists U \in \U_n$ such that $U^{-1} A U = D$ is diagonal. Furthermore, because $D$ is also Hermitian, all of its entries will be real. We can now consider the exponential map $\exp: \fr_n \to \R_n$. This is clearly a $G$-homeomorphism when restricted to the diagonal elements, as it is simply $n$ copies of the homeomorphism $\exp: \RR \to \RR_{\ge 0}$ in that case (and every point is a $G$-fixed point).

More generally, we have $\exp(A) = \exp(U D U^{-1}) = U \exp(D) U^{-1}$. Now, if $R \in \R_n$, then we can write $R = U D U^{-1}$. We know that $\exists! D'$ for which $\exp(D') = D$, so $\exp(U D' U^{-1}) = R$, so $\exp$ is surjective. Furthermore, $\exp(A)$ is diagonal iff $A$ is diagonal, because $A, \exp(A)$ have the same eigenspaces, so $\exp$ is a bijection, and therefore is a homeomorphism.

The real points of $\fr_n$ are the $n \times n$ real symmetric matrices, which are homeomorphic to the real points of $\R_n$, the $n \times n$ positive definite symmetric matrices, under the exponential map by exactly the same argument (if $A \in \fr_n$ is real, then we may diagonalize it with orthogonal matrices). Thus, $\exp$ is a $G$-homeomorphism, and therefore $\R_n$ is $G$-contractible.
\end{proof}

We may conclude from this that we have a $G$-equivalence $\GL_n(\CC) \simeq \U_n$. This allows us to replace $\Omega \U_n$ with $\Omega \GL_n(\CC)$ for the purpose of constructing the Grassmannian model.

\section{The Space of Polynomial Loops}

One well known technique in the study of loop groups, as discussed in \cite{Pressley} is to replace a given smooth loop space by a homotopy equivalent space which is easier to work with (and often algebraically defined), such as the space of polynomial loops. In our case of interest, the space $\Omega \U(V; W)$ is not necessarily a group (unless $W \cong \CC$), but we may still use a similar procedure, by replacing $\Omega \U(V \oplus W)$, and considering its image in $\Omega U(V; W)$. In this section, we shall discuss this procedure in more detail.

As alluded to already, we wish to replace the space of smooth loops $\Omega \GL_n(\CC)$ with a more algebraic object.

\begin{defn}
Let $\Omega_{pol} \GL_n(\CC)$ be the subset of $\gamma \in \Omega \GL_n(\CC)$ such that the entries of $\gamma(z)$ are polynomials in $z, z^{-1}$. We can also define $\Omega_{pol} \U_n := \Omega_{pol} \GL_n(\CC) \cap \Omega \U_n$, and more generally $\Omega_{pol} K := \Omega_{pol} \GL_n(\CC) \cap \Omega K$, for any subgroup $K \subset \GL_n(\CC)$. We point out here that the requirement of a polynomial loop $\gamma$ being in $\Omega_{pol} \U_n$ can also be expressed as $\gamma(z) \in \U_n$ for $z \in S^1$ (as this cannot be the case for all $z \in C^\times$ if $\gamma$ is nonconstant).
\end{defn}

While we are primarily interested in based loop spaces, it will be convenient to also consider the space $L X$ of unbased smooth maps $S^1 \to X$. For any Lie group $K$, we have the short exact sequence of topological groups $0 \to \Omega K \to L K \to K \to 0$, where $K$ acts on $\Omega K$ via conjugation. This sequence splits in the category of smooth manifolds when we forget the algebraic structure. In particular, this means that we have a homotopy equivalence $LK / K \to \Omega K$.

We will define $L_{pol} K$ similarly with respect to $\Omega_{pol} K$, and we shall let $L_{pol}^{(d)}$ denote the subspace of polynomial loops which contain powers of $z, z^{-1}$ of at most $d$.

We now present the following lemma from \cite{Bauer} which will be incredibly helpful for us.

\begin{lem}
Let $X$ be a compact Hausdorff space, $Y \subseteq X$ a closed subspace, and $V$ a real vector space of dimension at least $2$. Suppose that $f: X \to L (V - \{0\})$ is a continuous map such that $f(Y) \subseteq L_{pol}^{(c)} (V - \{0\})$ for some $c \ge 0$.
Then, there is an integer $d > 0$ and a continuous map $g: X \to L_{pol}^{(d)} (V - \{0\})$ such that $f_t(x) := (1 - t) f(x) + t g(x)$ defines a loop in $V - \{0\}$ for all $t \in I, x \in X$ and $g(y) = f(y)$ for all $y \in Y$. In other words, $f$ is homotopic to $g$ via $f_t$, which is constant on the entirety of $Y$.
\end{lem}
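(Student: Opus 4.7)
The strategy is Fourier approximation of $f$ combined with the observation that a Laurent polynomial loop of degree at most $c$ equals its own Fourier partial sum in every degree $\ge c$. This automatically forces the required agreement on $Y$, and reduces the problem to choosing the truncation degree large enough that straight-line interpolation stays away from the origin.

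Concretely, I would proceed as follows. First, the adjoint $F : X \times S^1 \to V - \{0\}$ given by $F(x, z) = f(x)(z)$ is continuous on a compact space, so after fixing any norm on $V$ I can set $\epsilon := \min_{X \times S^1} \|F\| > 0$. Next, for each $x \in X$ expand $f(x)$ as a Fourier series $f(x)(z) = \sum_{k \in \ZZ} \hat f(x, k) z^k$ with coefficients in $V_\CC$ subject to the reality condition $\hat f(x, -k) = \overline{\hat f(x, k)}$, and define the partial sum
$$g_N(x)(z) := \sum_{|k| \le N} \hat f(x, k) z^k \in L_{pol}^{(N)}(V).$$
Continuity of $g_N$ in $x$ is immediate since each coefficient $\hat f(\cdot, k)$ is a fiberwise integral of the continuous family $F$. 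Because $f$ takes values in the space of smooth loops and $X$ is compact, the family $\{f(x)\}_{x \in X}$ is bounded in the $C^2$ norm, and standard Fourier decay estimates then give $\|g_N(x) - f(x)\|_{C^0(S^1, V)} \to 0$ uniformly in $x$. The key observation about $Y$ is that for $y \in Y$, the loop $f(y)$ is a Laurent polynomial of degree $\le c$, so $\hat f(y, k) = 0$ for $|k| > c$, and hence $g_N(y) = f(y)$ for every $N \ge c$. Choosing $N \ge c$ large enough that $\|g_N(x)(z) - f(x)(z)\| < \epsilon / 2$ uniformly in $(x, z)$, and setting $d = N$ and $g = g_N$, the pointwise estimate
$$\|f_t(x)(z)\| \ge \|f(x)(z)\| - \|g(x)(z) - f(x)(z)\| \ge \epsilon - \epsilon/2 > 0$$
shows that $f_t(x)$ is a loop in $V - \{0\}$ for every $(t, x) \in I \times X$; in particular $g$ itself takes values in $L_{pol}^{(d)}(V - \{0\})$, and by construction $g(y) = f(y)$ for all $y \in Y$.

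The only delicate point is the uniform convergence $g_N \to f$ in $C^0$, which depends on the topology one places on the smooth loop space $L(V - \{0\})$. Since the paper is working throughout with smooth loops in the natural $C^\infty$ topology, compactness of $X$ furnishes the needed uniform $C^k$ bounds and the Fourier estimate goes through without trouble. No Tietze extension or partition-of-unity gluing is required, because the Fourier truncation is built from $f$ itself and therefore inherits the relative condition on $Y$ automatically once the truncation degree exceeds $c$.
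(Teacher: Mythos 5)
The paper does not actually prove this lemma---it is quoted from \cite{Bauer} and used as a black box---so there is no in-paper argument to compare against; your proposal supplies a genuine proof, and it is correct. The key device, using Dirichlet partial sums rather than (say) Fej\'er means or a Stone--Weierstrass approximation, is exactly what makes the relative condition come for free: a Laurent-polynomial loop of degree $\le c$ is literally equal to its own $N$-th partial sum for $N \ge c$, so $g$ agrees with $f$ on $Y$ without any Tietze-type correction, whereas a Ces\`aro or abstract approximation would damp the low-order coefficients and force an extra gluing step. The price is that partial sums of merely continuous loops need not converge, so your argument genuinely needs the uniform $C^2$ bound coming from continuity of $f$ into the smooth loop space with its $C^\infty$ topology and compactness of $X$; you flag this correctly, and it is consistent with the paper's standing convention that $L$ consists of smooth loops. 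Two small observations: the hypothesis $\dim V \ge 2$ is never used in your argument (your proof is in fact more general, which is harmless---Bauer presumably needs it elsewhere or for a different method), and one should say explicitly that $\epsilon := \min_{X \times S^1}\|F\| > 0$ is attained because $F$ is continuous on a compact set with values in $V - \{0\}$, and that the reality condition $\hat f(x,-k) = \overline{\hat f(x,k)}$ guarantees $g_N(x)$ is a loop in the real vector space $V$; you state both, so the proof stands as written.
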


This tells us how to deform any loop to an algebraic loop, and therefore gives us the following 

\begin{cor}
If $V$ is a complex vector space and $\dim_\CC V \ge 2$, then the inclusion
$$\Omega_{pol} (V - \{0\}) \hookrightarrow \Omega (V - \{0\})$$
is a $G$-equivalence.
\end{cor}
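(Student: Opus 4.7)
Since $G = \ZZ/2\ZZ$ has only two subgroups, it suffices to show that the inclusion $\iota: \Omega_{pol}(V - \{0\}) \hookrightarrow \Omega(V - \{0\})$ induces weak homotopy equivalences both on underlying spaces and on $G$-fixed point subspaces. Both cases will follow from two standard applications of the lemma just cited, once to obtain surjectivity on homotopy groups and once to obtain injectivity.

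For the underlying spaces, I would argue as follows. Given $k \ge 0$ and a based map $f: S^k \to \Omega(V - \{0\})$, view $f$ as a map $S^k \to L(V - \{0\})$ and apply the lemma with $X = S^k$ and $Y = \{*\}$, noting that the value of $f$ at the basepoint is the constant loop at $* \in V - \{0\}$, which is certainly polynomial. The resulting $g$ takes values in $\Omega_{pol}(V - \{0\})$, and the linear homotopy $f_t$ shows $[f]$ is in the image of $\iota_*$. For injectivity of $\iota_*$ on $\pi_k$, suppose two based polynomial loops $\alpha_0, \alpha_1$ represent classes that become equal in $\pi_k\Omega(V-\{0\})$; take a based homotopy $h: S^k \times I \to \Omega(V-\{0\})$ realizing this and apply the lemma with $X = S^k \times I$ and $Y = (S^k \times \partial I) \cup (\{*\} \times I)$, on which $h$ is already polynomial. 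The lemma yields a polynomial homotopy rel $Y$, which witnesses equality in $\pi_k \Omega_{pol}(V - \{0\})$.

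For the $G$-fixed point subspaces, the same two-step argument applies, provided the deformation produced by the lemma can be chosen $G$-equivariantly when its input is. The proof of the lemma in \cite{Bauer} proceeds by a linear approximation scheme (Fourier truncation) followed by a straight-line homotopy in $V$, both of which commute with the involution $\gamma \mapsto (z \mapsto \overline{\gamma(\bar z)})$. Consequently, when $f$ factors through $L(V - \{0\})^G$, so do $g$ and the intermediate $f_t$. Alternatively, one can apply the non-equivariant lemma to obtain $g'$ close to $f$ and then symmetrize by setting $\tilde g := \tfrac{1}{2}(g' + g\cdot g')$; since $V - \{0\}$ is open in $V$, choosing $g'$ uniformly $\varepsilon$-close to $f$ (for $\varepsilon$ smaller than the distance from the image of $f$ to $0$) keeps $\tilde g$ inside $V - \{0\}$.

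The main obstacle is precisely this equivariance check: one must either inspect the construction used to prove the cited lemma and verify that it is natural in the $G$-action, or else push through the symmetrization argument above with a uniform estimate. Everything else is the routine translation of weak-equivalence criteria into surjectivity and injectivity on $\pi_k$, carried out on both the underlying space and its $G$-fixed subspace.
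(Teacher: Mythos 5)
Your treatment of the underlying (non-equivariant) statement is fine and is essentially the standard way to extract a weak equivalence from the cited lemma (modulo the small point that the lemma produces \emph{free} polynomial loops, so your $g$ lands in $L_{pol}(V-\{0\})$ rather than $\Omega_{pol}(V-\{0\})$, and basepoints must be corrected, e.g.\ by comparing the evaluation fibrations $\Omega \to L \to V-\{0\}$; the paper glosses this too). The genuine gap is in the fixed-point step, and you name it yourself: the argument hinges either on the claim that the construction \emph{inside} the cited lemma (truncation plus straight-line homotopy) commutes with the involution $\gamma \mapsto \overline{\gamma(\bar z)}$ --- a property that is not part of the quoted statement and would require inspecting or reproving the lemma --- or on the symmetrization $\tfrac12\bigl(g' + \overline{g'(\bar z)}\bigr)$, which needs a uniform $\varepsilon$-closeness estimate that the lemma also does not supply (without it the average of two nonvanishing loops can pass through $0$). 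As written, neither alternative is established, so the $G$-fixed-point half of the equivalence is unproved. A telling symptom is that your argument never makes essential use of the hypothesis $\dim_\CC V \ge 2$, nor of the real form $V_\RR - \{0\}$, which is precisely where that hypothesis has to enter.

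The paper's proof avoids opening up the lemma at all: it applies the lemma verbatim \emph{twice}, to $V - \{0\}$ and to $V_\RR - \{0\}$ (the second application is exactly where $\dim_\CC V \ge 2$ is used, since the lemma demands real dimension at least $2$), identifies the $G$-fixed points of $\Omega(V-\{0\})$ and of $\Omega_{pol}(V-\{0\})$ --- conjugation-symmetric loops, i.e.\ the relative path space $\Omega\bigl(V-\{0\},\, V_\RR - \{0\}\bigr)$ on the smooth side and the real-coefficient Laurent loops on the polynomial side --- and then compares the corresponding fibration sequences built from $\Omega(V_\RR - \{0\})$ and $\Omega(V-\{0\})$, concluding on fixed points by the five lemma. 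If you want to keep your route, you must actually verify the equivariance of the construction in the cited reference (or prove the quantitative closeness statement); otherwise the two-applications-plus-fibration-comparison argument is the cleaner repair.
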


\begin{proof}
We can apply the lemma to both $V - \{ 0 \}$ and $V_\RR - \{ 0 \}$, after noting the $G$-fixed points of both spaces, giving us homotopy equivalences $L_{pol} (V - \{0\}) \hookrightarrow L (V - \{0\})$, for $W = V, V_\RR$, and therefore $\Omega_{pol} (W - \{0\}) \to \Omega (W - \{0\})$.

Now, the $G$-fixed points of $\Omega (V - \{0\})$ are precisely $\Omega ((V - \{0\}), (V_\RR - \{0\}))$, which is $\Omega ((V - \{0\}) / (V_\RR - \{0\}))$.

Similarly, for the polynomial loops, the $G$-fixed points are the set of $\gamma \in \Omega_{pol} (V - \{0\})$ such that $\gamma(z)$ has only real coefficients, which is similarly the quotient of $\Omega_{pol} (V - \{0\})$ by the subgroup $\Omega_{pol} (V_\RR - \{0\})$. By comparing the fiber sequences, the result follows.
\end{proof}

We will use this to prove the following proposition.

\begin{prop}
The inclusion $\Omega_{pol} \GL_n(\CC) \hookrightarrow \Omega \GL_n(\CC)$ is a $G$-equivalence.
\end{prop}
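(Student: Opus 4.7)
The plan is to proceed by induction on $n$, using the $G$-equivariant principal fibration
$$\mathrm{Stab}(e_1) \to \GL_n(\CC) \xrightarrow{\mathrm{ev}_{e_1}} \CC^n \setminus \{0\},$$
whose projection sends $A \mapsto Ae_1$ and whose fiber---the subgroup $\{B : Be_1 = e_1\}$---is a $G$-equivariant affine bundle over $\GL_{n-1}(\CC)$, hence $G$-equivalent to it. All maps are $G$-equivariant because complex conjugation commutes with matrix multiplication and fixes $e_1$. Looping yields a $G$-fibration sequence $\Omega \GL_{n-1}(\CC) \to \Omega \GL_n(\CC) \to \Omega(\CC^n \setminus \{0\})$, and the parallel construction for polynomial loops gives a candidate sequence mapping into it.

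For the base case $n = 1$, I analyze $\Omega_{pol} \CC^\times \hookrightarrow \Omega \CC^\times$ directly. Non-equivariantly, both sides have $\pi_0 = \ZZ$ (winding number around $0$), with each component contractible: on the polynomial side, a loop of winding $k$ factors by Birkhoff as $z^k h(z)$ with $h$ of winding zero, and $h$ deformation retracts to $1$ via $h(tz)$, $t \in [0,1]$. A parallel analysis on $G$-fixed points---real-coefficient polynomial loops versus smooth paths from $1$ to $\RR^\times$---shows both sides again have $\pi_0 = \ZZ$ (the winding number determining the endpoint sign by the $G$-symmetry), with each component contractible.

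For the inductive step ($n \geq 2$), I compare the two candidate fibration sequences. The leftmost vertical $\Omega_{pol} \GL_{n-1}(\CC) \to \Omega \GL_{n-1}(\CC)$ is a $G$-equivalence by the inductive hypothesis, and the rightmost $\Omega_{pol}(\CC^n \setminus \{0\}) \to \Omega(\CC^n \setminus \{0\})$ is a $G$-equivalence by the previous corollary (applicable since $\dim_\CC \CC^n \geq 2$). The five lemma applied to the long exact sequences of equivariant homotopy groups---both non-equivariantly and on $G$-fixed points---then forces the middle vertical $\Omega_{pol} \GL_n(\CC) \to \Omega \GL_n(\CC)$ to be a $G$-equivalence.

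The main obstacle is showing the polynomial row genuinely behaves as a $G$-fibration sequence up to homotopy. Two points must be checked: (i) every polynomial loop $\gamma$ in $\CC^n \setminus \{0\}$ lifts to $\Omega_{pol} \GL_n(\CC)$, and (ii) the resulting homotopy fiber matches $\Omega_{pol} \GL_{n-1}(\CC)$. For (i), factor $\gamma = f \tilde\gamma$ where $f \in \CC[z, z^{-1}]$ is the g.c.d.\ of the entries of $\gamma$ (nonvanishing on $S^1$ since $\gamma$ is) and $\tilde\gamma$ is unimodular; then $\tilde\gamma$ extends to $\tilde A \in \GL_n(\CC[z, z^{-1}])$ by elementary operations over the PID, and $A := \tilde A \cdot \mathrm{diag}(f, 1, \ldots, 1)$ lifts $\gamma$ with $\det A = f \det \tilde A$ nonvanishing on $S^1$. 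For (ii), because $A_0^{-1}$ need not be polynomial for a polynomial lift $A_0$, I compare the strict polynomial fiber with the smooth fiber by applying Bauer's approximation lemma fiberwise. $G$-equivariance throughout is automatic because all constructions can be performed over real coefficients when starting from a $G$-fixed loop.
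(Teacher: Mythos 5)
Your proposal follows the same architecture as the paper's proof: induct on $n$ via the evaluation fibration $\GL_n(\CC)\to\CC^n\setminus\{0\}$ whose fibre is an affine bundle over $\GL_{n-1}(\CC)$, compare the polynomial and smooth looped fibration sequences, handle the base $\Omega_{pol}(\CC^n\setminus\{0\})\to\Omega(\CC^n\setminus\{0\})$ by the corollary to Bauer's lemma (valid for $n\ge 2$), and conclude with the five lemma; the paper does exactly this, evaluating at $e_n$ instead of $e_1$. Your step (i) -- extracting the gcd $f$ of the entries and extending the resulting unimodular row over the PID $\CC[z,z^{-1}]$ (and over $\RR[z,z^{-1}]$ for the fixed-point statement) -- addresses a surjectivity point that the paper passes over silently when it simply substitutes the Laurent rings into the algebraic bundle $\GL_n(A)\to A^n\setminus\{0\}$, and it is a genuine improvement in care.

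Two steps do not work as written. First, in the base case the contraction of the winding-zero factor by $h(tz)$ fails: a winding-zero Laurent-polynomial loop nonvanishing on $S^1$ can have zeros inside the punctured unit disk compensated by a pole at $0$, e.g.\ $h(z)=z^{-1}(z-\tfrac12)(z-3)$ (suitably normalized), and then $h(tz)$ hits $0$ on $S^1$ at $t=\tfrac12$. You must contract the two Birkhoff factors separately -- $\gamma_+(tz)$, legitimate because $\gamma_+$ is zero-free on the closed disk, and $\gamma_-(z/t)$, legitimate because $\gamma_-$ is zero-free outside -- and do so compatibly with complex conjugation to get the fixed-point half of the claim. (The paper instead identifies polynomial loops in $\CC^\times$ with the monomials $\alpha z^k$, i.e.\ with regular maps of varieties, which makes its base case immediate, at the cost of using a smaller model than the stated definition of $\Omega_{pol}$.) Second, your step (ii) is not yet an argument: Bauer's lemma approximates loops in a punctured vector space and does not by itself identify the homotopy fibre of $\Omega_{pol}\GL_n(\CC)\to\Omega_{pol}(\CC^n\setminus\{0\})$ with $\Omega_{pol}\GL_{n-1}(\CC)$. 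What is needed is the algebraic identification of the strict fibre over the Laurent rings -- the stabilizer of a vector is $\GL_{n-1}\ltimes(\text{affine factor})$, so every nonempty fibre is a torsor under $L_{pol}\GL_{n-1}(\CC)\times(\CC[z,z^{-1}])^{n-1}$ -- together with a reason the polynomial projection is a (quasi)fibration; this is the route the paper takes (also somewhat tersely), and it transfers verbatim to real coefficients, which is exactly what makes the comparison $G$-equivariant.
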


\begin{proof}
There are many known proofs of the non-equivariant version of this, such as Theorem 1.4 from \cite{Mitchell}, and most famously Theorem 8.6.6 from \cite{Pressley}. While the latter proof could be adapted to the equivariant statement, a new method will be presented below, as it better highlights why one should expect the result to be $\ZZ / 2$-equivariant.

For any $\RR$-algebra $A$, we can think of $\GL_n(A)$ as a fiber bundle over $A^n - \{0\}$,
with the fiber over $v$ being the space of all $M \in \GL_n(A)$ such that $M e_n = v$. Such $M$ are in natural bijection with $\GL(A^n / \langle v \rangle) \times \hom_A(A^n / \langle v \rangle, \langle v \rangle)$.
That is, we have the fiber sequence
$$\GL_{n - 1}(A) \times A^{n - 1} \to \GL_n(A) \to A^n - \{0\}.$$

By setting $A = \RR, \CC$, we get a $G$-fiber sequence
$$\GL_{n - 1}(\CC) \times \CC^{n - 1} \to \GL_n(\CC) \to \CC^n - \{0\},$$
and therefore a $G$-fiber sequence
$$\Omega (\GL_{n - 1}(\CC) \times \CC^{n - 1}) \to \Omega \GL_n(\CC) \to \Omega (\CC^n - \{0\}),$$
where we take $\Omega( - )$ to mean $\Map_*(S^\sigma, -)$, for the sign representation $\sigma$.

Setting $A = \RR[z, z^{-1}], \CC[z, z^{-1}]$ gives us the $G$-fiber sequence
$$\Omega_{pol} (\GL_{n - 1}(\CC) \times \CC^{n - 1}) \to \Omega_{pol} \GL_n(\CC) \to \Omega_{pol} (\CC^n - \{0\}).$$

We now have the map of fibrations
$$\begin{array}{ccccc}
\Omega_{pol} (\GL_{n - 1}(\CC) \times \CC^{n - 1}) & \to & \Omega_{pol} \GL_n(\CC) & \to & \Omega_{pol} (\CC^n - \{0\}) \\
\downarrow & & \downarrow & & \downarrow \\
\Omega (\GL_{n - 1}(\CC) \times \CC^{n - 1}) & \to & \Omega \GL_n(\CC) & \to & \Omega (\CC^n - \{0\})
\end{array}$$

We know from the previous lemma that the right vertical arrow is a $G$-equivalence. It is clear that both $\Omega_{pol}$ and $\Omega$ distribute over products, and yield $G$-contractible spaces when applied to a complex vector space. Thus, the left vertical arrow is $G$-equivalent to $\Omega_{pol} \GL_{n - 1}(\CC) \to \Omega \GL_n(\CC)$, so the result will follow from induction and the five lemma if we can establish the base case of $n = 1$. We note that it is sufficient to prove this for the free loop map $L_{pol} \GL_{n - 1}(\CC) \to L \GL_n(\CC)$.

We know that $\GL_1(\CC) = \CC^\times$, so our map of interest is $\CC^\times(\CC[z, z^{-1}]) \to L \CC^\times$. The coordinate ring of $\CC^\times$ is naturally $\CC[z, z^{-1}]$, so the domain of this map is naturally the space of $\CC$-algebra endomorphisms of $\CC[z, z^{-1}]$. Because such a map $\varphi$ is completely determined by the image of $z$, and $\varphi(z)$ must be invertible, we have $\varphi(z) = \alpha z^k$, for $\alpha \in \CC, k \in \ZZ$. This means that $L_{pol} \CC^\times \simeq \CC^\times \times \ZZ$.

We know that $\Omega \CC^\times \simeq \ZZ$, so $L \CC^\times \simeq \CC^\times \times \ZZ$. It is also clear that the inclusion is a $G$-equivalence, as it is on both parts of the product- the loop given by $\gamma(z) = z^k$ lies in the $k$ component of both the domain and codomain, and the $\CC^\times$ component simply tells us the base point which must be preserved as well.

Thus, we may conclude that $\Omega_{pol} \GL_n(\CC) \to \Omega \GL_n(\CC)$ is a $G$-equivalence.
\end{proof}

We have the following important corollary.

\begin{cor}
The inclusion $\Omega_{pol} \U_n \hookrightarrow \Omega \U_n$ is a $G$-equivalence.
\end{cor}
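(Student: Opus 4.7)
The plan is to deduce this corollary from the preceding proposition via a commutative square argument combined with the polar decomposition. First I would set up the square of inclusions
$$\begin{array}{ccc}
\Omega_{pol} \U_n & \hookrightarrow & \Omega \U_n \\
\downarrow & & \downarrow \\
\Omega_{pol} \GL_n(\CC) & \hookrightarrow & \Omega \GL_n(\CC),
\end{array}$$
in which the horizontal arrows are the polynomial-to-smooth inclusions and the vertical arrows come from $\U_n \hookrightarrow \GL_n(\CC)$. The bottom horizontal is a $G$-equivalence by the preceding proposition.

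The right vertical arrow is a $G$-equivalence as well: the polar decomposition lemma gives a $G$-equivariant homeomorphism $\GL_n(\CC) \cong \U_n \times \R_n$, and the factor $\R_n$ was shown earlier to be $G$-contractible via the matrix exponential. Consequently $\Omega \R_n$ is $G$-contractible and $\Omega \U_n \hookrightarrow \Omega \GL_n(\CC)$ is a $G$-equivalence.

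By the 2-out-of-3 property of $G$-equivalences, it suffices to prove the left vertical $\Omega_{pol} \U_n \hookrightarrow \Omega_{pol} \GL_n(\CC)$ is a $G$-equivalence. The natural strategy is to transport polar decomposition to the polynomial setting. For $\gamma \in \Omega_{pol} \GL_n(\CC)$, the smooth homotopy $\gamma_t(z) = \gamma(z) R(z)^{-t}$, where $R(z) = (\gamma(z)^* \gamma(z))^{1/2}$, gives a $G$-equivariant deformation of $\gamma$ onto a unitary loop inside $\Omega \GL_n(\CC)$; the obstruction is that $R(z)$ involves a matrix square root, so the path leaves $\Omega_{pol} \GL_n(\CC)$ in its interior. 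However, Bauer's polynomial approximation lemma (employed in the preceding proposition) can be invoked to deform this trajectory back into $\Omega_{pol} \GL_n(\CC)$, relative to its polynomial endpoints.

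The main obstacle is executing this Bauer-style approximation $G$-equivariantly while simultaneously preserving the polynomial condition along the deformation and the unitary condition at its endpoint. An equally viable alternative, which may avoid some of these technicalities, is to mimic the fibration induction of the preceding proposition directly for the $G$-equivariant fibration $\U_{n-1} \to \U_n \to S(\CC^n)$, inducting on $n$ and reducing the base case to the already-established $G$-equivalence $\Omega_{pol}(V - \{0\}) \hookrightarrow \Omega(V - \{0\})$ for $\dim_\CC V \ge 2$, via the natural $G$-equivariant identification $S(\CC^n) \simeq \CC^n - \{0\}$. Once the left vertical is known to be a $G$-equivalence by either route, the 2-out-of-3 property completes the proof.
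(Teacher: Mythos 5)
Your reduction is sound as far as it goes: the square commutes, the bottom horizontal map is a $G$-equivalence by the preceding proposition, the right vertical map is a $G$-equivalence by polar decomposition together with the $G$-contractibility of $\R_n$, and so by two-out-of-three the corollary is equivalent to the assertion that $\Omega_{pol} \U_n \hookrightarrow \Omega_{pol} \GL_n(\CC)$ is a $G$-equivalence (the paper records the corollary without any argument, so this framing is reasonable). The genuine gap is that neither of your two proposed arguments establishes this left vertical map, and that map carries essentially all of the content of the corollary. The polar homotopy $\gamma_t(z) = \gamma(z) R(z)^{-t}$ does not merely leave $\Omega_{pol} \GL_n(\CC)$ in its interior: its endpoint $\gamma R^{-1}$ is in general not a Laurent-polynomial loop at all, since $R(z) = (\gamma(z)^* \gamma(z))^{1/2}$ involves square roots, so there is no polynomial unitary loop being produced to compare with. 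Moreover Bauer's lemma, as quoted and used in the proposition, concerns loops in a punctured vector space $V - \{0\}$ and proceeds by \emph{linear} interpolation $f_t = (1-t)f + tg$; it says nothing about targets cut out by the unitarity constraint, and linear interpolation cannot preserve $\gamma(z)^* \gamma(z) = I$, so it cannot be invoked to push the trajectory back into polynomial loops while retaining a unitary endpoint.

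The alternative route has a parallel problem. For the fibration $\U_{n-1} \to \U_n \to S(\CC^n)$ you would need, first, that $\Omega_{pol} \U_n \to \Omega_{pol} S(\CC^n)$ is a $G$-(quasi)fibration with the expected fiber, which in particular requires completing any Laurent-polynomial vector $v(z)$ with $v(z)^* v(z) = 1$ on $S^1$ to a unitary matrix of Laurent polynomials -- a nontrivial algebraic extension statement you do not address; and second, that $\Omega_{pol} S(\CC^n) \hookrightarrow \Omega S(\CC^n)$ is a $G$-equivalence. The latter does not follow from the established statement for $\CC^n - \{0\}$: the equivalence $\CC^n - \{0\} \simeq S(\CC^n)$ is implemented by normalization $v \mapsto v / \| v \|$, which destroys polynomiality, so ``polynomial loops in the sphere'' and ``polynomial loops in $\CC^n - \{0\}$'' are genuinely different objects and Bauer's lemma again does not apply to the norm-one condition. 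In effect, the claim that $\Omega_{pol} \U_n \hookrightarrow \Omega_{pol} \GL_n(\CC)$ is an equivalence is the polynomial Iwasawa/Gram--Schmidt factorization for loop groups (the content of Theorem 8.6.6 of \cite{Pressley}, or equivalently the identification of $\Omega_{pol} \U_n$ with the algebraic Grassmannian developed in the following sections), so your reduction exchanges the corollary for a statement of equal difficulty without supplying a proof of it.
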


\section{Algebraic Loops of the Homogeneous Space}

It turns out that $\Omega \U(V; W)$ is in many ways simpler when $W \ne 0$, despite failing to be a group when $\dim(W) > 1$. In particular, we have the following lemma.

\begin{lem}
Let $V, W \in \cJ$, with $W \ne 0$. Then, $\Omega \U(V; W)$ is $G$-connected.
\end{lem}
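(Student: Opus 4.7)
The plan is to establish $G$-connectedness of $\Omega\U(V;W)$ by checking that both the underlying space and its $G$-fixed points are path-connected, which by the earlier corollary identifies with showing $\Omega\U(V;W)$ and $\Omega(\U(V;W),\Or(V_\RR;W_\RR))$ are path-connected.

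First, I would verify non-equivariant connectedness. Since $\U(V;W) = \U(V\oplus W)/\U(W)$ fits in the fibration $\U(W) \to \U(V\oplus W) \to \U(V;W)$, the long exact sequence of homotopy groups together with the fact that $\U(V\oplus W)$ is connected and $\U(W)$ is connected shows $\U(V;W)$ is simply connected whenever $W\ne 0$. (In fact, it is $(2\dim_\CC W)$-connected, but simple-connectedness is all we need.) Hence $\Omega\U(V;W)$ is path-connected.

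Next, for the fixed points, I would invoke the identification $(\Omega\U(V;W))^G = \Omega(\U(V;W),\Or(V_\RR;W_\RR))$ from the previous corollary, and use the endpoint-evaluation fibration
\[
\Omega\U(V;W) \longrightarrow \Omega(\U(V;W),\Or(V_\RR;W_\RR)) \longrightarrow \Or(V_\RR;W_\RR).
\]
The associated long exact sequence reduces $\pi_0$ of the middle term to a quotient of $\pi_0(\Or(V_\RR;W_\RR))$, using that $\Omega\U(V;W)$ is connected by the previous paragraph. Thus it suffices to prove $\Or(V_\RR;W_\RR)$ is path-connected. This is the real Stiefel manifold $\Or(V_\RR\oplus W_\RR)/\Or(W_\RR)$, and the analogous fibration $\Or(W_\RR) \to \Or(V_\RR\oplus W_\RR) \to \Or(V_\RR;W_\RR)$ together with $\pi_0(\Or(W_\RR)) = \pi_0(\Or(V_\RR \oplus W_\RR)) = \ZZ/2$ (for $W_\RR \ne 0$) and surjectivity of $\Or(W_\RR) \to \Or(V_\RR \oplus W_\RR)$ on $\pi_0$ (both components are hit by $\pm\Id$) gives path-connectedness of the quotient.

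The only mild subtlety is confirming that the composite $\pi_0(\Or(W_\RR)) \to \pi_0(\Or(V_\RR \oplus W_\RR))$ is surjective; this is where the hypothesis $W \ne 0$ (hence $W_\RR \ne 0$) is used, since otherwise no real orientation-reversing element of $\Or(V_\RR)$ can be realized from the subgroup. Putting the two paragraphs together yields that $\Omega\U(V;W)$ and its $G$-fixed point space are both path-connected, which is the desired $G$-connectedness.
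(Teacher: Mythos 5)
Your proof is correct and takes essentially the same route as the paper: the paper likewise gets $\pi_0$ of the underlying space from $\pi_1(\U(V;W))=0$ via the fibration $\U(W)\to\U(V\oplus W)\to\U(V;W)$, and handles the fixed points by identifying them with $\Omega(\U(V;W),\Or(V_\RR;W_\RR))$ and using connectedness of $\Or(V_\RR;W_\RR)$ together with simple connectivity of $\U(V;W)$ (you merely spell out the real Stiefel connectivity, which the paper asserts). Two cosmetic slips worth fixing: simple connectivity of $\U(V;W)$ needs surjectivity of $\pi_1(\U(W))\to\pi_1(\U(V\oplus W))$ (or the standard $2\dim_\CC W$-connectivity you cite), not just connectedness of the groups; and when $\dim W_\RR$ is even, $-\Id_{W_\RR}$ is orientation-preserving, so the nonidentity component of $\Or(V_\RR\oplus W_\RR)$ should be hit by a reflection in $W_\RR$ rather than by $\pm\Id$.
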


\begin{proof}
We know that $\pi_0^1(\Omega \U(V; W)) \cong \pi_1(\U(V; W))$, which is the cokernel of the map $\pi_1(\U(W)) \to \pi_1(\U(V \oplus W))$ (since $\pi_0(\U(W)) = 0$). Since this map is an isomorphism, we have that $\pi_0^1(\Omega \U(V; W)) = 0$.

We know from before that $(\Omega \U(V; W))^G = \Omega (\U(V; W), \Or(V_\RR; W_\RR))$, so
$$\pi_0^G(\Omega \U(V; W)) = \pi_0( \Omega (\U(V; W), \Or(V_\RR; W_\RR)) ),$$
which is trivial, as it is the homotopy fiber of the inclusion $\Or(V_\RR; W_\RR) \hookrightarrow \U(V; W)$, both of which are connected and the latter of which is simply connected.
\end{proof}

In order to further study $\Omega \U(V; W)$, we seek a $G$-equivalent algebraic model, in the same manner as above. The natural candidate for this is the orbit space
$$\Omega_{pol} \U(V; W) := \Omega_{pol} \U(V \oplus W) / \Omega_{pol} \U(W),$$
which can also be realized as the image of $\Omega_{pol} \U(V \oplus W)$ under the defining quotient map $U(V \oplus W) \to U(V; W)$.

It is clear that $\Omega_{pol} \U(W) \to \Omega_{pol} \U(V \oplus W) \to \Omega_{pol} \U(V; W)$ is a fibration sequence on the underlying space. Furthermore, the sequence of fixed points is simply that arising from the subgroups of matrices having only real coefficients, which again yields a fibration sequence. Thus, we have a $G$-fibration sequence.

This, along with the maps $\Omega_{pol} \U(V) \to \Omega \U(V)$ gives us the following diagram of $G$-fibration sequences:
$$\begin{array}{ccccc}
\Omega_{pol}(W) & \to & \Omega_{pol} \U(V \oplus W) & \to & \Omega_{pol} \U(V; W) \\
\downarrow{\simeq} & & \downarrow{\simeq} & & \downarrow \\
\Omega \U(W) & \to & \Omega \U(V \oplus W) & \to & \Omega \U(V; W)
\end{array}$$

We know from above that the first two vertical arrows are $G$-homotopy equivalences. Thus, for $H \le G$, $n \ge 1$, we have the following diagram from the long exact homotopy sequences: 
$$\begin{array}{ccccccccc}
\pi_n^H( \Omega_{pol} \U(W) ) & \to & \pi_n^H( \Omega_{pol} \U(V \oplus W) ) & \to & \pi_n^H( \Omega_{pol} \U(V; W) ) & \to & \pi_{n - 1}^H( \Omega_{pol} \U(W) ) \\
\downarrow{\cong} & & \downarrow{\cong} & & \downarrow & & \downarrow{\cong} \\
\pi_n^H( \Omega \U(W) ) & \to & \pi_n^H( \Omega \U(V \oplus W) ) & \to & \pi_n^H( \Omega \U(V; W) ) & \to & \pi_{n - 1}^H( \Omega \U(W) ) \\
\downarrow{\cong} & & \downarrow{\cong} & & \downarrow{\cong} & & \downarrow{\cong} \\
\pi_{n + 1}^H( \U(W) ) & \to & \pi_{n + 1}^H( \U(V \oplus W) ) & \to & \pi_{n + 1}^H( \U(V; W) ) & \to & \pi_n^H( \U(W) ) \\
\end{array}$$

We may employ the five lemma to conclude that $\pi_n^H(\Omega_{pol} \U(V; W)) \to \pi_n^H(\Omega \U(V; W))$ is an isomorphism for $H = 1, G$.

It remains only to show that $\pi_0^H( \Omega_{pol} \U(V; W) ) \to \pi_0^H( \Omega \U(V; W) ) \cong \pi_1^H( \U(V; W) )$ is an isomorphism for $H = 1, G$. We already know from the above lemma that $\Omega \U(V; W)$ is $G$-connected when $W \ne 0$, so we need only verify that $\Omega_{pol} \U(V; W)$ is $G$-connected as well. However, . We may conclude that $\Omega_{pol} \U(V; W) \simeq \Omega \U(V; W)$, so we may replace the loop space of the Stiefel manifold with its algebraic analogue.

\section{A Grassmannian Model for the Loop Group}

One of the advantages working with algebraic loops is that we can describe our space as an infinite dimensional Grassmanian. In this section, we will construct algebraic varieties $S_k(V) \subset \Omega_k \U(V)$, as detailed in \cite{Crabb}, where $\Omega_k \U(V) = \{ \gamma \in \U(V) \ | \ \deg( \det(\gamma) ) = k \}$, and use them to construct a Grassmannian model for $\Omega \U(V)$.

We consider the subclass of algebraic loops given by $p_E(z) := z \pi_U \oplus 1 \pi_{U^\perp}$ for some linear subspace $E \subseteq V$. We take $S_k(V)$ to be the image of the map $p: \PP(V) \times \cdots \times \PP(V) \to \Omega \U(V)$ given by $(L_1, \ldots, L_k) \mapsto p_{L_1} \cdots p_{L_k}$. Note that $S_0(V) = \{ 1 \}$ and $S_1(V)$ can be identified with $\PP(V)$. Furthermore, if $k < \dim(V)$, for a generic set of $k$ lines in $V$, none of the $L_i$ will lie in any plane defined by any subset of the others, and therefore the lines $L_1, \ldots, L_k$ will define a $k$-plane $E$. In this case, the corresponding element of $S_k(V)$ will be precisely $p_E$, which can be identified with $E \in \Gr_k(V)$. In other words, when $k < \dim(V)$, a generic point of the domain will yield an element of $\Gr_k(V)$, suggesting that $S_k(V)$ is naturally some sort of extension of the usual complex Grassmannian.

Proposition 2.3 in \cite{Crabb} tells us the following.

\begin{prop} The union $\displaystyle X(V) := \bigsqcup_{k \ge 0} S_k(V)$ consists of those $\gamma \in \Omega \U(V)$ for which we can write $\displaystyle \gamma(z) = \sum_{i \ge 0} \gamma_i z^i$, for $\gamma_i \in \End(V)$. Furthermore, any such $\gamma$ has a canonical (but not necessarily unique) expression as a product $p_{E_1} \circ \cdots \circ p_{E_r}$, where $E_1, \ldots, E_r$ are subspaces of $V$.
\end{prop}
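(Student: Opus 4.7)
The plan is to prove this as a two-part statement: first, the set equality characterizing $X(V)$ as the polynomial loops in $\Omega \U(V)$ with only non-negative powers of $z$; second, the existence of a canonical factorization. Although this is Proposition 2.3 of \cite{Crabb}, I would reprove it here to make clear the $G$-equivariant content (the canonical subspaces $E_i$ will automatically be $G$-equivariant in $V$, since all of the constructions commute with complex conjugation).

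First, I would dispatch the easy containment. Each $p_L(z) = z\pi_L + \pi_{L^\perp}$ is a polynomial of degree $1$ in $z$ with non-negative powers only, so any product $p_{L_1} \circ \cdots \circ p_{L_k}$ belongs to the asserted set of polynomial loops. Moreover $\det p_L(z) = z$, so such a product lies in $\Omega_k \U(V)$ and the pieces $S_k(V)$ are genuinely disjoint as $k$ varies.

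The heart of the argument is the reverse inclusion, which I would prove by induction on the degree $N$ of $\gamma(z) = \sum_{i=0}^N \gamma_i z^i$, with $\gamma_N \neq 0$. The base case $N = 0$ reduces to $\gamma \equiv I$, which is the empty product. For the inductive step, set $E := \mathrm{im}(\gamma_N)$. Expanding the unitarity relation $\gamma(z)\gamma(z)^* = I$ for $|z| = 1$ and reading off the coefficient of $z^N$ gives $\gamma_N \gamma_0^* = 0$, hence $\mathrm{im}(\gamma_N) \subset \mathrm{im}(\gamma_0)^\perp$. Now consider
$$\delta(z) := p_E(z)^{-1} \gamma(z) = (z^{-1} \pi_E + \pi_{E^\perp}) \gamma(z).$$
The constant-in-$z^{-1}$ obstruction $z^{-1} \pi_E \gamma_0$ vanishes because $E \subset \mathrm{im}(\gamma_0)^\perp$, so $\delta$ is genuinely polynomial in $z$. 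The potential leading term $\pi_{E^\perp} \gamma_N z^N$ vanishes because $\mathrm{im}(\gamma_N) \subset E$. Hence $\delta$ has degree $\le N-1$, is unitary on $|z| = 1$ as a product of unitaries, and satisfies $\delta(1) = I$. By induction $\delta = p_{E_2} \circ \cdots \circ p_{E_r}$, so $\gamma = p_E \circ p_{E_2} \circ \cdots \circ p_{E_r}$, placing $\gamma$ in $X(V)$. To land specifically in $S_k(V)$ rather than an unspecified product of subspace projectors, I would factor each $p_{E_i}$ with $\dim E_i = d_i$ as a product $p_{L_{i,1}} \circ \cdots \circ p_{L_{i,d_i}}$ over any orthonormal basis of $E_i$, using the identity $p_L \circ p_{L'} = p_{L \oplus L'}$ for orthogonal lines $L \perp L'$, which follows from the orthogonality relations $\pi_L \pi_{L'} = 0$ and $\pi_{L^\perp} \pi_{L'^\perp} = \pi_{(L \oplus L')^\perp}$; then $\gamma \in S_k(V)$ for $k = \sum d_i = \deg \det \gamma$.

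Canonicity follows automatically: the recipe $E_i := \mathrm{im}$ of the leading coefficient at stage $i$ is determined by $\gamma$ at each step, so the factorization is intrinsic; non-uniqueness reflects the freedom to enlarge $E_i$ anywhere within $\mathrm{im}(\gamma_N) \subset E \subset \mathrm{im}(\gamma_0)^\perp$, and the freedom to factor on the left versus the right. The main obstacle is simply the bookkeeping in the inductive step: one must check both that $\delta$ has no negative powers of $z$ \emph{and} that its degree strictly drops, both of which pin down to the single unitarity consequence $\gamma_N \gamma_0^* = 0$; once that identity is exploited, the rest of the argument is formal.
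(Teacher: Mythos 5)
Your overall strategy is sound, but one step needs repair before the inductive step closes: the implication ``$\gamma_N \gamma_0^* = 0$, hence $\im(\gamma_N) \subset \im(\gamma_0)^\perp$'' is not valid for a general pair of operators. (Take $\gamma_0$ the orthogonal projection onto a line $L$ and $\gamma_N$ a rank-one map killing $L$ with image $L$: then $\gamma_N\gamma_0^* = 0$, yet both images equal $L$.) The identity $\gamma_N\gamma_0^* = 0$ only gives $\im(\gamma_0^*) \subseteq \ker(\gamma_N)$. What your argument actually needs is the coefficient of $z^N$ in the companion relation $\gamma(z)^*\gamma(z) = I$, namely $\gamma_0^*\gamma_N = 0$, which says exactly that $\langle \gamma_N x, \gamma_0 y\rangle = 0$ for all $x,y$, i.e. $\im(\gamma_N) \perp \im(\gamma_0)$. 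Since $\gamma(z)$ is unitary on $|z|=1$, both relations hold, so the fix is one line; with it, the rest of your step (no $z^{-1}$ term since $\pi_E\gamma_0 = 0$, degree drop since $\pi_{E^\perp}\gamma_N = 0$, $\delta(1) = I$, unitarity of $\delta$) goes through. Note also that your induction on a finite degree $N$ is using the intended reading of the statement, namely that $\gamma$ is a polynomial loop; for loops with merely non-negatively supported Fourier expansions the claim would be false (scalar Blaschke factors), so the finiteness is doing real work.

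Granting that fix, your route is genuinely different from the paper's. The paper (following Crabb) factors from the right: at each stage it divides by $p_E$ with $E = \ker\gamma_0$, the maximal subspace for which $\gamma \circ p_E^{-1}$ has no $z^{-1}$ term, and terminates because $\dim\ker\gamma_0$ strictly decreases, the endpoint ($\gamma_0$ invertible) forcing $\gamma = 1$ via the same top unitarity coefficient. You factor from the left by $p_{\im(\gamma_N)}$ and induct on the polynomial degree, which makes termination transparent, and you supply a step the paper leaves implicit: splitting each $p_{E_i}$ into line factors via $p_L \circ p_{L'} = p_{L\oplus L'}$ for $L \perp L'$, so that an element with $\deg\det\gamma = k$ visibly lands in $S_k(V)$, and the $S_k(V)$ are disjoint by comparing determinants. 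The trade-off is that your ``canonical'' factorization is a different canonical recipe from Crabb's maximal right-handed one, which is the choice echoed later when elements of $S_k(V\oplus W)$ are factored from the right by subspaces of $W$; since the proposition only asserts existence of a canonical, not unique, expression, this is harmless, but it is worth flagging if you intend your factorization to feed into those later lemmas.
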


The essence of the proof is iteratively factoring $\gamma$ from the right, at each step taking $\gamma' = \gamma \circ P_E^{-1}$, for the maximal such $E$ for which $\gamma'$ contains no $z^{-1}$ term. This happens if and only if $E \subseteq \ker \gamma_0$. It is easy to see that taking $E = \ker \gamma_0$ gives the maximal such $E$, and that $\dim \ker \gamma'_0 < \dim \ker \gamma$. Thus, we will eventually have $\ker \gamma_0 = 0$, meaning that $\gamma_0$ is invertible, and therefore $\gamma = 1$.

We will now describe how to extend the tautological bundle $\tau$ on $\Gr_k(V) \subset S_k(V)$ to the entirety of $S_k(V)$, and in doing so give a description of $S_k(V)$ as a complex algebraic variety. Let $\tilde H(V) = V \otimes_\CC \CC[z, z^{-1}]$, $H(V)$ the subspace $V \otimes_\CC \CC[z]$, and let $I(V) := \tilde H(V) / H(V)$, which we can identify with $\bigoplus_{i > 0} z^{-i} V$. We have a filtration of the $\CC[z]$-module $I(V)$ by submodules $I^k V = \{ m \in I(V) \ | \ z^k m = 0 \}$, or $\bigoplus_{i = 1}^k z^{-i} V$, using the second description.

It is clear that we have an action of $X(V)$ on $\tilde H(V)$ that carries $H(V)$ to $H(V)$, and therefore induces an action on $I(V) \to I(V)$. Because every $\gamma \in X(V)$ is invertible, the induced map $\gamma_*: I(V) \to I(V)$ is surjective, and furthermore that each $\gamma \in X(V)$ gives a unique map on $I(V)$. It is also clear that if $\gamma \in S_k(V)$, then $\dim \ker \gamma_* = k$ and $\ker \gamma_* \subseteq I^k V$. This allows us to define a map $\xi: S_k(V) \to \Gr_k(I^k V)$ by $\gamma \mapsto \ker \gamma_*$.

This is important for two reasons; the first of which is that it gives us a $G$-equivariant embedding of $S_k(V)$ into the Grassmannian $\Gr_k(I^k V)$. (Note that if $\ker \gamma_* = \ker \gamma'_*$, then $\gamma_*' = \alpha_* \gamma_*$ for some invertible $\alpha_*$, meaning $\alpha \in S_0(V)$. This means that $\alpha = 1$, and therefore $\gamma = \gamma'$, so indeed we have an embedding.) In fact, by identifying $I^1 V$ with $V$, and noticing that the image of all of the $p_E$, for $E \subseteq V$ of dimension $k$ is precisely $\Gr_k(I^V)$, we obtain $\Gr_k(V) \subseteq S_k(V) \subseteq \Gr_k(I^k V)$.

Now, we claim that the image of $S_k(V)$ under $\xi$ is $\cM_k(V) := \{ M \in \Gr_k(I^k V) \ | \ zM \subset M \}$. It is clear that each $\xi(\gamma)$ has this property, so we must show the converse; that every $M \in cM$ is the image of some $\gamma \in S_k(V)$. We will proceed by induction. Suppose that we know that this is true for $\cM_i(V)$ for $0 \le i \le k - 1$, and let $M \in \cM_k(V)$. Because $z^k M = 0$, we must have $z M \ne M$, so $z M \in \Gr_i(I^i V)$, for $i < k$. This means that we have some $\gamma' \in S_i(V)$ such that $\xi(\gamma') = zM$. This means that $M = z^{-1} \xi(\gamma') \oplus \ker(z: M \to M)$. Since $\ker(z |_M) \subset I^1 V$, we can identify it with some subspace $E \subset V$ of dimension $k - i$, and therefore $\xi(p_E) = \ker(z |_M)$. We now have $M = z^{-1} \xi(\gamma') \oplus \xi(p_E) = \xi(\gamma' \circ p_E)$, so $M \in \im(\xi |_{S_k(V)})$. Because we have the trivial base case of $k = 0$ we may conclude that $\xi: S_k(V) \to \cM_k(V)$ is a $G$-homeomorphism.

This tells us that $X(V)$ can be identified with the infinite dimensional Grassmannian $\cM(V) := \bigsqcup_{k \ge 0} \cM_k(V)$, the space of all finite dimensional subspaces of $I(V)$ which are closed under multiplication by $z$. For those familiar with the affine Grassmanian $\cG(V)$, we will note that $X(V)$ is a proper subspace, with nonzero intersection with only those connected components indexed by nonnegative integers. Furthermore, the inclusion $S_k(V) \to \cG_k(V)$ becomes an equivalence up to a dimension that increases to infinity as $k \to \infty$.

\section{The Grassmanian Model for $\Omega \U(V; W)$}
As we have throughout this chapter, we will take $V, W$ to be finite dimensional complex vector spaces. Throughout this section, we will largely follow \cite{Crabb}, and as such, will define $S^k(V; W) := \iota^*(S_k(V; W)) \subset \Omega \U(V; W)$ where $\iota^*: \Omega \U(V \oplus W) \to \Omega \U(V; W)$ comes from the trivial inclusion $\iota: V \to V \oplus W$ (which therefore induces the defining quotient map $\U(V \oplus W) \to \U(V; W)$). In practice, we can think of $S^k(V; W)$ as the quotient of $S_k(V \oplus W)$ on the right by $S_1(W)$ (and therefore $S_i(W)$, for all $1 \le i \le k$).

We define $X(V; W) := \bigcup_{k \ge 0} S^k(V; W)$, which we will later show is $G$-equivalent to the entire space $\Omega \U(V; W)$. This is actually the same filtration of a homotopy equivalent replacement for $\Omega \U(V; W)$ that Mitchell used in his initial work on the subject.

While the varieties $S_k(V)$ are all disjoint in $\Omega \U(V)$, $\{ S^k(V; W) \}$ give us an honest filtration of $X(V; W)$ as $\iota^*( S_{k - 1}(V \oplus W) ) = \iota^*( S_{k - 1}(V \oplus W) \circ S_1(W) ) \subset \iota^*( S_k( V \oplus W ) )$. We would like to know the associated graded pieces of the filtration, so we would like to get a handle on $S^k(V; W) - S^{k - 1}(V; W)$. We have the following two lemmas from \cite{Crabb} that will help us do this.

\begin{lem}
We can write every element of $S_k(V \oplus W)$ as a product $g \circ h$, where $g \in S_\ell (V \oplus W), h \in S_{k - \ell}(V \oplus W)$, and $g$ is not divisible from the right by any element of $S_1(W)$.
\end{lem}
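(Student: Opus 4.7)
The plan is to argue by induction on $k$, with the base case $k = 0$ being trivial (take $g = h = 1$ and $\ell = 0$). For the inductive step, I would exploit the factorization criterion implicit in the previous proposition: an element $\gamma \in S_k(V \oplus W)$ admits a right-factorization $\gamma = \gamma' \circ p_L$ with $\gamma' \in S_{k-1}(V \oplus W)$ precisely when $L \subseteq \ker \gamma_0$, where $\gamma_0$ denotes the constant Laurent coefficient of $\gamma$. This is a direct computation: since $p_L^{-1}(z) = z^{-1}\pi_L + \pi_{L^\perp}$, we have $(\gamma \circ p_L^{-1})(z) = \gamma(z)\pi_{L^\perp} + z^{-1}\gamma(z)\pi_L$, so the $z^{-1}$ coefficient vanishes if and only if $\gamma_0 \pi_L = 0$, i.e., $L \subseteq \ker \gamma_0$.

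Given $\gamma \in S_k(V \oplus W)$, form the subspace $E := \ker \gamma_0 \cap W$. If $E = 0$, set $g := \gamma$, $h := 1$, $\ell := k$; any purported right-factorization $g = g' \circ p_L$ with $L \subseteq W$ would force $L \subseteq E = 0$, so the non-divisibility condition holds vacuously. Otherwise, choose any line $L \subseteq E$ and set $\gamma' := \gamma \circ p_L^{-1}$, which by the criterion above lies in $X(V \oplus W)$, and in fact in $S_{k-1}(V \oplus W)$ by degree considerations (see below). By the inductive hypothesis applied to $\gamma'$, write $\gamma' = g \circ h'$ with $g \in S_{\ell}(V \oplus W)$, $h' \in S_{k-1-\ell}(V \oplus W)$, and $g$ not right-divisible by any $p_{L'}$ with $L' \subseteq W$. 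Define $h := h' \circ p_L \in S_{k-\ell}(V \oplus W)$. Then $\gamma = \gamma' \circ p_L = g \circ (h' \circ p_L) = g \circ h$, and the non-divisibility of $g$ transfers from the inductive step because it is a condition on $g$ alone.

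The main obstacle I anticipate is confirming that $\gamma'$ truly lands in $S_{k-1}(V \oplus W)$ rather than merely in $X(V \oplus W)$; this should follow by comparing determinants ($\det \gamma$ has degree $k$ while $\det p_L^{-1}$ has degree $-1$, giving $\det \gamma'$ of degree $k-1$) and invoking the partition $X(V \oplus W) = \bigsqcup_{j \ge 0} S_j(V \oplus W)$ indexed by this degree. A secondary bookkeeping point is tracking that the subspace $E = \ker \gamma_0 \cap W$ is independent of the particular representative expression of $\gamma$ as a product of $p_{L_i}$'s, so the recursion is well-defined despite the non-uniqueness of such expressions; but since $E$ is defined intrinsically from the Laurent coefficient $\gamma_0$, this presents no real difficulty. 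The lemma asks only for existence of a factorization, so the freedom in choosing the line $L \subseteq E$ at each step is harmless.
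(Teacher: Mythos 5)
Your argument is correct and is essentially the paper's proof: both iteratively strip right factors $p_E$ with $E \subseteq \ker\gamma_0 \cap W$ using the criterion that right-divisibility by $p_E$ is equivalent to $E \subseteq \ker\gamma_0$, stopping when $\ker\gamma_0 \cap W = 0$. The only cosmetic difference is that you peel off one line at a time and organize the recursion as an induction on $k$ (justifying the degree drop via the determinant), whereas the paper removes the whole subspace $\ker\gamma_0 \cap W$ at each step and notes the kernel dimension decreases.
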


\begin{proof}
Let $\gamma \in S_k(V \oplus W)$, and let $E = \ker \gamma \cap W$. If $E = 0$, we are done. Otherwise, $p_E$ divides $\gamma$ from the right, so let $\gamma' = \gamma \circ p_E^{-1}$. We then have $\gamma = \gamma' \circ p_E$. We now repeat this procedure, replacing $\gamma$ with $\gamma'$. Because we reduce the dimension of $\ker \gamma$ by at least $1$ each time, after $r$ steps for some $r \le k$, we have the factorization $\gamma = \tilde \gamma \circ p$, where $\ker \tilde \gamma \cap W = 0$ and $p = p_{E_r} \circ \cdots \circ p_{E_1}$, for subspaces $E_1, \ldots, E_r \subseteq W$. (Note that as written, this produces the canonical factorization of $p \in S_\ell(W)$.)
\end{proof}

\begin{lem}
The map $\iota^*$ restricts to a $G$-homeomorphism
$$S_k(V \oplus W) - S_{k - 1}(V \oplus W) \circ S_1(W) \to S^k(V; W) - S^{k - 1}(V; W).$$
\end{lem}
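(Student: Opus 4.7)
The plan is to show that $\iota^*$ restricts to a continuous $G$-equivariant bijection between the two indicated subspaces, and then to upgrade this bijection to a homeomorphism using the compactness of $S_k(V \oplus W)$ and $S^k(V; W)$. Continuity and $G$-equivariance are inherited directly from $\iota^*$, so the real content lies in producing the bijection and in the identification $\iota^{*-1}(S^{k-1}(V; W)) = S_{k-1}(V \oplus W) \circ S_1(W)$ needed for the topology step.

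For surjectivity, I would take any $\bar\gamma \in S^k(V; W) - S^{k-1}(V; W)$, pick a lift $\gamma \in S_k(V \oplus W)$, and apply the previous lemma to factor $\gamma = g \circ h$ with $g \in S_\ell(V \oplus W)$ not divisible from the right by any element of $S_1(W)$ and $h \in S_{k - \ell}(W)$. Since $h$ is a polynomial loop in $\U(W)$, we have $\iota^*(g) = \iota^*(\gamma) = \bar\gamma$, and the hypothesis $\bar\gamma \notin S^{k-1}(V; W)$ forces $\ell = k$, yielding the desired preimage. Injectivity is the \emph{main obstacle}: I would suppose $\gamma_1, \gamma_2$ lie in the source with $\iota^*(\gamma_1) = \iota^*(\gamma_2)$, so $\gamma_2 = \gamma_1 \delta$ for some smooth $\delta \in \Omega \U(W)$. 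Using $\gamma_1(z)^{-1} = \gamma_1(z)^*$ on $S^1$ together with the expansions $\gamma_i(z) = \sum_{j = 0}^{k} (\gamma_i)_j z^j$, I would observe that $\delta(z) = \sum_l d_l z^l$ is a Laurent polynomial with $-k \le l \le k$. Comparing the coefficient of $z^{-k}$ in the identity $\gamma_1(z) \delta(z) = \gamma_2(z)$ gives $(\gamma_1)_0 \, d_{-k} = 0$; the non-divisibility hypothesis on $\gamma_1$ is precisely $\ker (\gamma_1)_0 \cap W = 0$, and since $d_{-k}$ maps $W$ into $W$, this forces $d_{-k} = 0$. Iterating at the coefficients of $z^{-k+1}, \ldots, z^{-1}$ gives $d_l = 0$ for all $l < 0$, so $\delta \in X(W)$; the determinant relation $\det \delta = \det \gamma_2 / \det \gamma_1$ then has degree zero, placing $\delta$ in $S_0(W) = \{1\}$ and giving $\gamma_1 = \gamma_2$.

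For the topological upgrade, I would note that $\iota^*: S_k(V \oplus W) \to S^k(V; W)$ is a continuous surjection of compact Hausdorff spaces (each being a continuous image of a compact product of projective spaces), hence a closed quotient map. The bijection above, together with the obvious inclusion $\iota^*(S_{k-1}(V \oplus W) \circ S_1(W)) \subseteq S^{k-1}(V; W)$, identifies $\iota^{*-1}(S^{k-1}(V; W))$ with $S_{k-1}(V \oplus W) \circ S_1(W)$; thus the restriction of $\iota^*$ to the complements of these closed saturated subsets is a continuous bijection that remains a quotient map, and is therefore a homeomorphism, with $G$-equivariance automatic from that of $\iota^*$.
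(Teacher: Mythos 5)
Your argument is essentially correct and in fact supplies far more detail than the paper does: the paper's entire proof is the assertion that the ``kernel'' of $\iota^*\colon X(V\oplus W)\to X(V;W)$ is $X(W)$, from which injectivity of the restricted map is declared clear, and the homeomorphism (topological) part is not discussed at all. Your Laurent-coefficient argument is a genuine proof of the point the paper waves at: if $\iota^*(\gamma_1)=\iota^*(\gamma_2)$ then the discrepancy $\delta=\gamma_1^{-1}\gamma_2$ is a $\U(W)$-valued loop, and non-divisibility of $\gamma_1$ (equivalently $\ker(\gamma_1)_0\cap W=0$, with $\im d_l\subseteq W$ for $l\ne 0$) kills its negative coefficients, so $\delta\in X(W)$ and the determinant degree forces $\delta=1$. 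Likewise your surjectivity step is the intended use of the preceding factorization lemma, and the compact-Hausdorff/closed-quotient-map upgrade of the bijection to a $G$-homeomorphism is correct and is a step the paper silently omits.

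The one genuine omission is the ``restricts to''/saturation step. You prove injectivity on the source and surjectivity onto the target, but never that a non-divisible $\gamma\in S_k(V\oplus W)$ actually satisfies $\iota^*(\gamma)\notin S^{k-1}(V;W)$; and the claim that $\iota^{*-1}(S^{k-1}(V;W))\cap S_k(V\oplus W)=S_{k-1}(V\oplus W)\circ S_1(W)$ ``follows from the bijection together with the obvious inclusion'' is not a valid formal deduction --- injectivity on the source plus surjectivity onto the target do not by themselves exclude a point of the source landing in $S^{k-1}(V;W)$, and that possibility would break both the bijection and the quotient-map step. Fortunately your own method closes it: if $\iota^*(\gamma)\in S^{k-1}(V;W)$, pick $\gamma'\in S_{k-1}(V\oplus W)$ with the same image and run the identical coefficient argument on $\delta=\gamma^{-1}\gamma'$; non-divisibility of $\gamma$ again forces all negative coefficients of $\delta$ to vanish, so $\delta\in X(W)$ with $\deg\det\delta=-1$, which is impossible since $X(W)=\bigsqcup_{j\ge 0}S_j(W)$ has nonnegative determinant degree. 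With that one additional observation your proof is complete, and more rigorous than the paper's own.
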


\begin{proof}
It is clear that the kernel of $\iota^*: X(V \oplus W) \to X(V; W)$ is $X(W)$. Thus, the restricted map
$\iota^* |_{S_k(V \oplus W) - S_{k - 1}(V \oplus W) \circ S_1(W)}$ is injective, and is therefore an isomorphism.
\end{proof}

By our factorization lemma, we can identify $S_k(V \oplus W) - S_{k - 1}(V \oplus W) \circ S_1(W)$ with $\cN_k(V; W) = \{ N \in S_k(V \oplus W) \ | \ N \cap I(W) = 0 \}$, and therefore by the second lemma, we can do the same for $S^k(V; W) - S^{k - 1}(V; W)$. We can also see that $\cN_k$ can be identified with the total space of a vector bundle. It is clear that we have a projection map $\cN_k(V; W) \to S_k(V)$, as for any $N \in cN_k(V; W)$, the restriction of the projection map $\pi: I(V \oplus W) \to I(V)$ to $N$ is nondegenerate. Thus, we obtain a surjective map $\pi_*: \cN_k(V; W) \to S_k(V)$, for which the fiber above $M \in S_k(V)$ is the set of all linear maps $f: M \to I(W)$ such $f (z m) = z f(m)$, by closure under multiplication by $z$.
Such maps necessarily have the form
$$f(m) = \sum_{i > 0} z^{-i} e ( z^{i - 1} m ),$$
for all $m \in M$, where $e: M \to W$ is some linear map. Thus, the fiber over $M \in S_k(V)$ is $\hom(M, W)$, so we can express the total space as $\cN_k(V; W) = \hom(\tau, W)$, where $\tau$ is the tautological bundle over $S_k(V)$.

This allows us to $G$-equivariantly identify the associated graded pieces of the filtration as Thom spaces $S^k(V; W) / S^{k - 1}(V; W) \cong S_k(V)^{\hom(\tau, W)}$. We would like to make two more notes here, the first being that because $S_k(V) \subset \Gr_k(I^k V)$, the latter of which is the (homotopy) orbit space of the $\U_k \rtimes G$-space $\mor(\CC^k, I^k V)$. We can form the pullback diagram
$$\begin{array}{ccc}
\tilde S_k(V) & \to & \mor(\CC^k, I^k V) \\
\downarrow & & \downarrow \\
S_k(V) & \to & \Gr_k(I^k V)
\end{array}$$

This gives us a natural $G$-equivalence $S_k(V) \simeq (\tilde S_k(V))_{h \U_k}$, and therefore a natural $G$-equivalence $S_k(V)^{\hom(\tau, W)} \simeq ( \tilde S_k(V)_+ \wedge S^{k W} )_{h \U_k}$.
It remains to show the following proposition, the non-equivariant version of which is used in \cite{Crabb} and previously by Mitchell and Richter.

\begin{prop}
 When $W \ne 0$, the inclusion $X(V; W) \to \Omega \U(V; W)$ is a $G$-equivalence.
\end{prop}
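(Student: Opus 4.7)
The plan is to factor the inclusion as
$$X(V;W) \hookrightarrow \Omega_{pol} \U(V;W) \xrightarrow{\ \simeq\ } \Omega \U(V;W),$$
where the second arrow is the $G$-equivalence already established in the preceding section. It then suffices to show that $X(V;W) \hookrightarrow \Omega_{pol} \U(V;W)$ is a $G$-equivalence, which by the $G$-Whitehead theorem amounts to checking that this map is a weak equivalence of underlying spaces and a weak equivalence on $G$-fixed point spaces.

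The underlying (non-equivariant) statement is the classical result of Mitchell and Richter, reproved and used in \cite{Crabb} and in \cite{Mitchell}: every element of $\Omega_{pol} \U(V \oplus W)$ admits, up to right multiplication by an element of $\Omega_{pol} \U(W)$, a representative with no negative powers of $z$, and the resulting filtration $\{S^k(V;W)\}$ exhausts $\Omega_{pol} \U(V;W)$ in a way compatible with a cell-like structure on $\Omega \U(V;W)$. Crucially, this uses $W \neq 0$ to absorb the central factor responsible for the $\ZZ$-worth of components of $\Omega_{pol} \U(V \oplus W)$. We take this non-equivariant theorem as known.

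For the $G$-fixed points, more work is required. A polynomial loop $\gamma(z) = \sum_i \gamma_i z^i$ is fixed by the involution $\gamma \mapsto \overline{\gamma(\bar z)}$ precisely when each coefficient matrix $\gamma_i$ is real. Accordingly $(\Omega_{pol} \U(V;W))^G$ is the image of the real-coefficient polynomial loops in $\U(V \oplus W)$, and by the corollary from the first section of this chapter this is identified, up to weak equivalence, with the path space $\Omega(\U(V;W), \Or(V_\RR; W_\RR))$. The filtration $\{S^k(V;W)\}$ restricts on $G$-fixed points to a filtration whose successive quotients are the $G$-fixed parts of $S_k(V)^{\hom(\tau,W)}$. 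We then invoke the real/orthogonal analogue of Mitchell--Richter, which is proved by the same argument as the complex version, but with Bauer's polynomial approximation lemma applied over $\RR$ in place of $\CC$, to deduce that the restricted inclusion is a weak equivalence on fixed points.

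The main obstacle will be the careful bookkeeping needed to match these $G$-fixed objects with their naive real analogues: one must verify that the embedding $\xi : S_k(V) \to \cM_k(V)$ intertwines the coefficient-conjugation involution on polynomial loops with the natural conjugation involution on subspaces of $I(V \oplus W)$, and that the operations of passing to the quotient by $\Omega_{pol} \U(W)$ and of taking $G$-fixed points commute up to weak equivalence. Once these compatibilities are in hand, the proposition follows from two parallel applications of the classical Mitchell--Richter theorem together with the $G$-equivalence $\Omega_{pol} \U(V;W) \simeq \Omega \U(V;W)$ already in place.
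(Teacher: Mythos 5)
There is a genuine gap at the fixed-point step. You correctly factor the map through $\Omega_{pol}\U(V;W)$ and correctly identify $(\Omega \U(V;W))^G$ with the relative path space $\Omega(\U(V;W),\Or(V_\RR;W_\RR))$, but you then dispose of the equivariant content by invoking ``the real/orthogonal analogue of Mitchell--Richter, proved by the same argument with Bauer's lemma over $\RR$.'' No such theorem is available to cite, and it is not the right statement in any case: the $G$-fixed points here are not loop spaces of orthogonal groups or real Stiefel manifolds (for which a Mitchell--Richter-type result is precisely what is \emph{not} known), but real-coefficient polynomial loops in the unitary group mapping to the path space $\Omega(\U(V;W),\Or(V_\RR;W_\RR))$, i.e.\ spaces of the flavor $\Omega(\SU/\SO)$. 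Establishing the equivalence on fixed points is exactly the new content of the proposition, so it cannot be deferred to a classical result ``proved the same way.'' The paper supplies this step concretely: it compares the two stabilizing filtrations $\bigcup_{r\ge 0} s^{-r}S_{k+rm}(V\oplus W)$ (whose image is $X(V;W)$, with $s = 1_V \oplus z 1_W$) and $\bigcup_{r\ge 0} z^{-r}S_{k+r(n+m)}(V\oplus W)$ (which surjects onto $\Omega_{pol}\U(V;W)$), and shows the inclusion of one union into the other is an equivalence as $k \to \infty$ by a homology computation with symmetric products: integral homology of $\CP(V\oplus W)$ for the underlying spaces, and mod $2$ homology of $\RP(V\oplus W)$ for the fixed points. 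Some argument of this kind on fixed points is indispensable; your proposal contains none.

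A secondary inaccuracy: it is not true that every element of $\Omega_{pol}\U(V\oplus W)$ can be modified by right multiplication by $\Omega_{pol}\U(W)$ to have no negative powers of $z$ --- such multiplication only affects the $W$-block, so for instance the class of $z^{-1}1_V \oplus 1_W$ never acquires a nonnegative representative. Consequently $X(V;W)$ is a proper subspace of $\Omega_{pol}\U(V;W)$, and the inclusion is only an equivalence in the limit over the filtration degree; this is why the paper's range-of-degrees homology argument (rather than a pointwise normal-form argument) is what actually carries the proof, both nonequivariantly and on fixed points.
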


\begin{proof}
It is clear that this inclusion factors as $X(V; W) \to \Omega_{pol} \U(V; W) \to \Omega \U(V;W)$, so we need only show that the former map is a $G$-equivalence.

Suppose $\dim V = n, \dim W = m$. From 2.23 in \cite{Crabb}, we have the filtration
$$S_k(V) \subseteq z^{-1} S_{k + n}(V) \subseteq \cdots \subseteq z^{-r} S_{k + rn}(V) \subseteq \subseteq \Omega_k \U(V),$$
for every $k \ge 0$, where $z = z I \in S_n(V)$ is the central element.

It is clear that the union $\bigcup_{r \ge 0} z^{-r} S_{k + rn}(V)$ is all of $\Omega_{pol} \U(V) \cap \Omega_k \U(V)$. We also know that $\Omega_{pol} \U(V \oplus W) \cap \Omega_k \U(V \oplus W)$ surjects onto $\Omega_{pol} \U(V; W)$ for any $k \ge 0$, so we may conclude the same for $\displaystyle \bigcup_{r \ge 0} z^{-r} S_{k + r(n + m)}(V \oplus W)$, where $z = z I \in S_{n + m}$ here. In particular, the union over all $k$ surjects onto $\Omega_{pol} \U(V; W)$.

Now, if we take $s = 1 I_V \oplus z I_W$, we obtain the similar filtration
$$S_k(V \oplus W) \subseteq s^{-1} S_{k + m}(V \oplus W) \subseteq \cdots \subseteq s^{-r} S_{k + rm}(V \oplus W) \subseteq \subseteq \Omega_k \U(V \oplus W).$$

By construction, the image of $s^{-r} S_{k + rm}(V \oplus W)$ in $\Omega_{pol} \U(V; W)$ ignores the factors of $s$, and therefore is exactly $S^{k + rm}(V \oplus W)$, and therefore for any $k$ the union
$$\bigcup_{r \ge 0} s^{-r} S_{k + rm}(V \oplus W)$$
has image equal to $X(V; W)$ (and furthermore the union of this over all $k$ has image $X(V; W)$). Therefore, if we can show for sufficiently large $k$ that the inclusion
$$\bigcup_{r \ge 0} s^{-r} S_{k + rm}(V \oplus W) \hookrightarrow \bigcup_{r \ge 0} z^{-r} S_{k + r(n + m)}(V \oplus W)$$
is a $G$-equivalence, then we are done.

However, the inclusion $s^{-r} S_{k + rm}(V \oplus W) \hookrightarrow z^{-r} S_{k + r(n + m)}(V \oplus W)$ induces a map in integral homology of the underlying spaces
$$b_0^{-rm} \Sym^{k + rm} H_*(\CP(V \oplus W); \ZZ) \to b_0^{-r(m + n)} \Sym^{k + r(m + n)} H_*(\CP(V \oplus W); \ZZ),$$
which is evidently an isomorphism up to some degree that increases to infinity with $k$. Thus, the inclusion is a homotopy equivalence for the underlying spaces as $k \to \infty$.

As for the fixed points, we have $s^{-r} S^\RR_{k + rm}(V_\RR \oplus W_\RR) \hookrightarrow z^{-r} S^\RR_{k + r(n + m)}(V_\RR \oplus W_\RR)$, which induces a map in mod $2$ homology
$$b_0^{-rm} \Sym^{k + rm} H_*(\RP(V \oplus W); \ZZ / 2) \to b_0^{-r(m + n)} \Sym^{k + r(m + n)} H_*(\RP(V \oplus W); \ZZ / 2),$$
which similarly is an isomorphism up to some degree that increases to infinity with $k$. We conclude that the inclusion of fixed points is also a homotopy equivalence as $k \to \infty$, and therefore that the inclusion is a $G$-equivalence as $k \to \infty$.

We may therefore conclude that $X(V; W)$ is $G$-equivalent to $\Omega \U(V; W)$.

\end{proof}

We now have a filtration of an algebraic replacement for $\Omega U(V; W)$ for which the associated graded components are $G$-homotopy equivalent to the precursors (before applying $Q$) of homogeneous polynomial functors for the input $W$.

\chapter{The Main Splitting Theorem}

\section{The General Equivariant Splitting}

We are now ready to establish the equivariant analogue of the splitting theorem from \cite{Arone}. 

\begin{thm}
Let $F: \cJ \to G-\Top$ be a functor with a $G$-filtration $F_0 \subset F_1 \subset \cdots $ such that\\
1) $F_0 = *$ \\
2) The action of $G$ on $F_{n - 1}(V)$ is that inherited from considering $F_{n - 1}(V)$ as a subspace of $F_n(V)$ for all $V \in \cJ$ \\
3) The homotopy cofiber of the inclusion $F_{n - 1}(V) \hookrightarrow F_n(V)$ is $G$-equivalent to $(\Th \wedge S^{n V_\CC})_{h \U_n}$, where $\Th$ is a spectrum with a $\U_n \rtimes G$ action.

Then, we have a natural stable $G$-equivalence of functors $F \simeq \bigvee_{n > 0} (F_n / F_{n - 1})$.
\end{thm}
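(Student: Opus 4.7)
My plan is to stabilize via $Q$ and then split the resulting cofiber sequence one filtration layer at a time, exploiting the fact that each layer $Q(F_n/F_{n-1})$ is a homogeneous polynomial functor of degree $n$, while the truncation $QF_{n-1}$ is polynomial of degree $\leq n-1$.

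First, since the conclusion is a stable $G$-equivalence, it suffices to prove $QF \simeq \bigvee_{n>0} QG_n$ naturally as functors out of $\cJ$, where $G_n := F_n/F_{n-1}$. Because $F = \hocolim_n F_n$ and $Q$ preserves sequential homotopy colimits, it is enough to construct, by induction on $n$, natural $G$-equivalences $QF_n \simeq \bigvee_{k=1}^{n} QG_k$ that are compatible with the filtration inclusions $F_n \hookrightarrow F_{n+1}$.

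The base case $n=1$ (using $F_0 = *$) is immediate: $QF_1 = QG_1$. For the inductive step, the filtration yields a natural stable cofiber sequence $QF_{n-1} \to QF_n \to QG_n$, and I wish to split it naturally. Equivalently, I must show the connecting map $\partial \colon QG_n \to \Sigma QF_{n-1}$ is naturally nullhomotopic. By hypothesis (3), combined with the example worked out at the end of Chapter 3, $QG_n$ is a homogeneous polynomial functor of degree $n$, so $P_{n-1}(QG_n) \simeq *$. By the inductive hypothesis, $QF_{n-1}$ is a finite wedge of homogeneous polynomial functors of degrees $1,\ldots,n-1$, and hence (along with its suspension) is itself polynomial of degree $\leq n-1$. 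Naturality of the approximation $P_{n-1}$ then gives the commutative diagram
\begin{equation*}
\begin{array}{ccc}
QG_n & \xrightarrow{\partial} & \Sigma QF_{n-1} \\
\downarrow & & \downarrow{\simeq} \\
P_{n-1}(QG_n) & \longrightarrow & P_{n-1}(\Sigma QF_{n-1})
\end{array}
\end{equation*}
in which the right vertical arrow is a $G$-equivalence and the lower-left object is $G$-contractible. Hence $\partial$ is nullhomotopic, yielding a natural section $QG_n \to QF_n$ of the projection and the desired splitting $QF_n \simeq QF_{n-1} \vee QG_n$.

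The main obstacle is the middle step, namely controlling the class of polynomial functors of degree $\leq n-1$ under finite wedges and suspension in the $G$-equivariant stable setting. The homotopy limit $T_{n-1}$ is taken over the finite $G$-poset $\cC_{n-1}$, so it commutes up to $G$-equivalence with finite wedges and with suspension once we have passed to $Q$; one must verify this carefully so that the naturality square above genuinely witnesses $\partial$ as factoring through $P_{n-1}(QG_n) \simeq *$. Once this is in hand, the inductive step closes, and passing to the sequential colimit in $n$ assembles the finite splittings into the claimed stable $G$-equivalence $F \simeq \bigvee_{n>0} (F_n/F_{n-1})$.
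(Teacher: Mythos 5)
Your proposal is correct and takes essentially the same route as the paper: stabilize with $Q$, use that $Q(F_n/F_{n-1})$ is homogeneous of degree $n$ (so $P_{n-1}Q(F_n/F_{n-1}) \simeq *$) while $QF_{n-1}$ and its suspensions are polynomial of degree $\le n-1$, split the cofiber sequence one layer at a time, and pass to the colimit. The only cosmetic difference is that you split by showing the connecting map $Q(F_n/F_{n-1}) \to \Sigma QF_{n-1}$ is null (yielding a section), whereas the paper obtains a retraction by observing that the composite $QF_{n-1} \to QF_n \to P_{n-1}QF_n$ is a $G$-equivalence, and it handles the suspension/wedge point you flag by instead proving $Q\Sigma^k F_n$ is polynomial of degree $\le n$ for all $k$ via the fiber-sequence closure property rather than via the wedge decomposition.
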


This will give us our desired equivariant splitting of the space $X(V_\CC; W_\CC)$, and therefore of $\Omega \U(V_\CC; W_\CC)$. As stated above, we have a $G$-equivariant filtration $S^n(V_\CC; W_\CC)$ of the space $X(V_\CC; W_\CC)$.

From \cite{Arone}, we have the following proposition.

\begin{prop}: Let $F_1 \to F_2 \to F_3$ be a $G$-fibration sequence of functors. It induces $G$-fibration sequences $T_n F_1 \to T_n F_2 \to T_n F_3$ and $P_n F_1 \to P_n F_2 \to P_n F_3$.
\end{prop}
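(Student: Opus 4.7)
The strategy is to handle $T_n$ first and then bootstrap up to $P_n$ via the defining sequential homotopy colimit. The main point is that both $T_n$ and $P_n$ are built out of homotopy limits and sequential homotopy colimits of the level-wise values of $F$, and each of these constructions is compatible with fibration sequences in a suitable sense.

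For the $T_n$ statement, recall that $T_n F(V) = \holim_{U \in \cC_n} \tilde F(V_\CC \oplus U)$. If $F_1 \to F_2 \to F_3$ is a $G$-fibration sequence, then for every fixed $V$ and every $U \in \cC_n$, the sequence $\tilde F_1(V_\CC \oplus U) \to \tilde F_2(V_\CC \oplus U) \to \tilde F_3(V_\CC \oplus U)$ is a $G$-fibration sequence of $G$-spaces. Homotopy limits preserve (level-wise) fibration sequences, since they are right-adjoint-like and commute with homotopy fibers. To check the $G$-equivariant version, I would pass to $H$-fixed points for each subgroup $H \le G$: since $\cC_n$ carries a $G$-action (restricting to an $H$-action), one has the identification $(\holim_{U \in \cC_n} X_U)^H \simeq \holim^H_{U \in \cC_n}(X_U^H)$, where the right-hand side is the homotopy limit over the $H$-twisted diagram of $H$-fixed points. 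Applied level-wise to the fibration sequence $F_1 \to F_2 \to F_3$, this reduces the claim for $T_n$ to the classical non-equivariant fact that $\holim$ preserves fibration sequences.

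For the $P_n$ statement, I would iterate the previous paragraph to conclude that $T_n^k$ preserves $G$-fibration sequences for every $k \ge 0$, so that the sequential diagram $F_i \to T_n F_i \to T_n^2 F_i \to \cdots$ consists of level-wise $G$-fibration sequences in $i$. Then $P_n F_i = \hocolim_k T_n^k F_i$ is a mapping telescope, and sequential homotopy colimits preserve fibration sequences of well-pointed pointed spaces: the long exact sequence in $\pi_*$ commutes with sequential colimits, and filtered colimits of exact sequences of abelian groups remain exact, so the telescoped sequence has the required long exact sequence in homotopy. Equivariantly, because the indexing category for the telescope carries trivial $G$-action, $(\hocolim_k X_k)^H = \hocolim_k X_k^H$ on the nose, reducing the $H$-fixed-point check to the non-equivariant case. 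Combining, $P_n F_1 \to P_n F_2 \to P_n F_3$ is a $G$-fibration sequence.

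The main obstacle in this plan is bookkeeping around equivariance and the non-trivial $G$-action on $\cC_n$ in the $T_n$ step; one has to be careful that fixed points of $\holim_{U \in \cC_n}$ are computed as the homotopy limit over the $H$-equivariant diagram rather than naively. Once that identification is in place, everything reduces to the corresponding classical statements about $\holim$ and sequential $\hocolim$, both of which are routine. No essentially new calculation is needed beyond those already exploited in the earlier parts of the chapter.
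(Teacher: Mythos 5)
Your proposal is correct and follows essentially the same route as the paper: pass to level-wise $G$-fibration sequences, use that the homotopy limit over $\cC_n$ preserves them to get the $T_n$ statement, then iterate and take the sequential homotopy colimit of the tower $F \to T_n F \to T_n^2 F \to \cdots$ for $P_n$. You simply supply more of the justification (fixed-point identifications and the $\pi_*$/filtered-colimit argument for the telescope) that the paper leaves implicit.
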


\begin{proof} Functoriality means that we get maps between $G$-fibration sequences
$$\begin{array}{ccccc}
F_1(V) & \to & F_2(V) & \to & F_3(V) \\
\downarrow & & \downarrow & & \downarrow \\
F_1(V \oplus U) & \to & F_2(V \oplus U) & \to & F_3(V \oplus U)
\end{array}$$

Taking the homotopy limit over all $U \in \cC_n$ gives the statement for $T_n$. This in turn allows us to form the diagram
$$\begin{array}{ccccc}
F_1(V) & \to & F_2(V) & \to & F_3(V) \\
\downarrow & & \downarrow & & \downarrow \\
T_n F_1(V) & \to & T_n F_2(V) & \to & T_n F_3(V) \\
\downarrow & & \downarrow & & \downarrow \\
T_n^2 F_1(V) & \to & T_n^2 F_2(V) & \to & T_n^2 F_3(V) \\
\downarrow & & \downarrow & & \downarrow \\
\vdots & & \vdots & & \vdots
\end{array}$$

Each row of this is clearly a $G$-fibration sequence. Taking the homotopy colimit of this gives us a $G$-fibration sequence $P_n F_1(V) \to P_n F_2(V) \to P_n F_3(V)$, as claimed.
\end{proof}

Now, suppose $F: \cJ_0 \to G-\Top_*$ is a continuous functor such that there exists a filtration of $F$ by sub-functors $F_n$ such that $F_0(V) \equiv *$, and for all $n \ge 1$, $\hocofiber[F_{n-1}(V) \to F_n(V)]$ is (up to a natural weak equivalence) of the form $(X_n \wedge S^{nV})_{h \U_n}$, where $X_n$ is a based space with an action of $\U_n \rtimes G$. We claim that $Q F_n$ is polynomial of degree $\le n$.

We will in fact show that $Q \Sigma^k F_n$ is polynomial of degree $\le n$. Suppose for the sake of induction that we know that $Q \Sigma^k F_n$ is polynomial of degree $\le n$ for all $k$. Then, we can consider the fibration sequence $Q \Sigma^k F_n \to \Sigma^k F_n / F_{n - 1} \to Q \Sigma^{k + 1} F_{n - 1}$. We know that the last two functors are polynomial of degree $\le n$ because of the condition on the given filtration of $F$ and the induction hypothesis. Then, because the homotopy fiber of a natural transformation of functors which are polynomial of degree $\le n$ is also polynomial of degree $\le n$, we conclude that $Q \Sigma^k F_n$ is polynomial of degree $\le n$, thus proving our claim.

Now, for the main theorem. We can consider the fibration sequence
$$Q F_{n - 1} \to Q F_n \to Q (F_n / F_{n - 1}),$$
and the following diagram $$\begin{array}{ccccc}
Q F_{n-1} & \to & Q F_n & \to & Q (F_n / F_{n-1}) \\
\downarrow{\simeq} & & \downarrow & & \downarrow \\
P_{n - 1} Q F_{n-1} & \to & P_{n - 1} Q F_n & \to & P_{n - 1} Q (F_n / F_{n-1})
\end{array}$$

Because the bottom row is a $G$-fibration sequence and the bottom right space is $G$-contractible, the bottom left map is a weak $G$-homotopy equivalence. This means that the composition $Q F_{n - 1} \to Q F_n \to P_{n - 1} Q F_n$ is a weak $G$-homotopy equivalence, and therefore $Q F_{n - 1}$ is a $G$-homotopy retract of $Q F_n$. Because the inclusion $F_{n - 1} \to F_n$ is an infinite loop map, it follows that $\Sigma^\infty F_n \simeq \Sigma^\infty (F_{n - 1} \wedge (F_n / F_{n - 1})$, giving us the desired $G$-equivariant splitting.

In our particular case, this means that we have a $G$-equivariant stable splitting
$$X(V_\CC; W_\CC) \simeq \bigvee_{n > 0} S_n(V_\CC)^{\hom(\tau, W_\CC)},$$
and therefore of $\Omega \U (V_\CC; W_\CC)$.

\section{The Splitting of the Fixed Points}

For this section, we will take $V, W$ to be real vector spaces.
From the previous section, we know that the stable splitting of $\Omega \U(V_\CC; W_\CC)$ is $\ZZ / 2$-equivariant, so we naturally get a splitting of the fixed point spectrum
$$(\Omega \U(V_\CC; W_\CC))^G \simeq (\bigvee_{n > 0} S_n(V)^{\hom(\tau, W)})^G = \bigvee_{n > 0} (S_n(V)^{\hom(\tau, W)})^G.$$

Furthermore, we may apply the geometric fixed points functor to obtain a stable splitting of the geometric fixed points, which are the real points, in the algebaic variety sense, of $\Omega \U(V_\CC; W_\CC)$. We saw previously that these may be described by the path space $\Omega ( \U(V_\CC; W_\CC), \Or(V; W) )$.

On the other side, we have a wedge sum of terms which are Thom spaces over the algebraic varieties $S_n(V_\CC)$. Because geometric fixed points distribute over vector bundles (and therefore Thom spaces), we point out that the real points of $\Gr_n(V_\CC)$ are given by the real $n$-dimensional Grassmannian of $V$, which we shall refer to as $Gr_n^\RR(V)$, and the real points of the fiber $\hom(\CC^n, W_\CC)$ are $\hom(\RR^n, W)$. This yields the bundle $\hom(\tau, W)$, where by a slight abuse of notation we will take $\tau$ to also mean the tautological bundle over the real Grassmannian.

Although $S_n$ was defined using complex valued polynomials on $\CC^\times$, the relevant action of $\ZZ / 2$ fixes the complex variable $z$. Thus, we are able to give a relatively simple description of the real points of $S_n(V_\CC)$. Recalling that we can define this space as a complex subvariety of $\Gr_n(I^n V_\CC)$, defined as $\{ M \in \Gr_n(I^n V_\CC) \given zM \subseteq M \}$. Its geometric fixed points are clearly given by $S_n^\RR(V) := \{ M \in \Gr^\RR_n(I^n V) \given zM \subseteq M \}$. Because the (geometric) fixed points of the $G$ action are exactly the real points in the scheme theoretic sense, it is clear that the tautological bundle over $S_n^\RR(V)$ extends that of the real Grassmannian in the same way as before, and furthermore, that the summands are $S_n^\RR(V)^{\hom(\tau, W)}$.

Thus, we obtain the stable splitting of the geometric fixed points
$$\Omega ( \U(V_\CC; W_\CC), \Or(V; W) ) \simeq \bigvee_{n = 1}^\infty S_n^\RR(V)^{\hom(\tau, W)}.$$

In the special case of $W \cong \RR$, we may take $V' = V \oplus \RR$ to obtain
$$\Omega ( \SU(V'_\CC) / \SO(V') ) \simeq \bigvee_{n = 1}^\infty S_n^\RR(V)^{\hom(\tau, \RR)} \simeq \bigvee_{n = 1}^\infty S_n^\RR(V)^{\hom(\tau, \RR)}.$$

Also, when $V \cong \RR$, we have $\U(\CC; W_\CC) = \BS(W_\CC) = \Sigma S^{W_\CC'}$, if $W = W' \oplus \RR$. Furthermore, $S_n(V_\CC)$ is the single point $n V_\CC = \CC^n$. Thus, we simply recover the $G$-equivariant James splitting
$$\Omega \Sigma S^{W'_\CC} = \bigvee_{n > 1} S^{n W'}.$$

It is also clear from our constructions that the natural map arising from the fibration sequence $\Omega \U(V_\CC; W_\CC) \to \Omega ( \U(V_\CC; W_\CC), \Or(V; W) )$ stably decomposes as a wedge sum of the maps $S_n(V_\CC)^{\hom(\tau, W_\CC)} \to S_n^\RR(V)^{\hom(\tau, W)}$.

\chapter{Future Questions}

We may be a to gain some insight into the $E_2$ algebra structure of the loop group $\Omega \SU_n$, via the $G$-equivariant stable splitting
$$\Omega \SU_n \simeq \bigvee_{m > 1} S^{m \CC^{n - 1}},$$
as it is known that polynomial of functors of degree $n$ are related to the little disks operad.

It may be possible to work out a theory of equivariant symplectic calculus as well, where the group acting is the Klein four group $V_4$, with the action of each nontrivial element given by conjugating two of the three imaginary units. In this case, because $\HH - \{0\} \simeq S^3$, algebraic objects analogous to those before would model spaces of the form $\Map_*(S^3, X)$, such as $\Omega^3 \Sp_n$, rather than the loop spaces.

Based on this, I propose the following conjecture.

\begin{conj}
Let $V, W$ be finite dimensional real vector subspaces of $\cU$ as before, inheriting the standard inner product. Let $\Sp(V_\HH; W_\HH)$ be the space of q-linear transformations that preserve the induced quaternionic inner product, considered as a $V_4$-space. Then, there is a $V_4$-equivariant stable splitting of $\Sp(V_\HH; W_\HH)$ into Thom spaces of finite dimensional vector bundles with base spaces that can be expressed as algebraic subvarieities of quaternionic Grassmanians of either $V_\HH$ or finite multiples of $V_\HH$.
\end{conj}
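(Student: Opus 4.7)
The plan is to mirror the strategy of the paper almost verbatim, replacing $\CC$ by $\HH$, the involution $\ZZ/2$ by the Klein four group $V_4$, and the unitary groups $\U_n$ by the symplectic groups $\Sp_n$. The crucial representation-theoretic observation is that $V_4$ acts on $\HH$ by $\RR$-algebra automorphisms (each nontrivial element is conjugation by one of $i,j,k$, which negates two imaginary units and fixes the third), the fixed subalgebra is $\RR$, and as a $V_4$-representation $\HH$ is precisely the regular representation $\rho_{V_4}$. Hence the analog of $Q=\Omega^{\infty\rho}\Sigma^{\infty\rho}$ still suspends by a copy of the regular representation, and the representation-theoretic accounting works exactly as in the complex case.

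First I would construct a $V_4$-equivariant symplectic calculus by letting $\cJ^{\Sp}$ have as objects finite-dimensional real subspaces of a fixed $\cU$ and as morphisms the quaternionic isometries $V_\HH\to W_\HH$, with $V_4$ acting on $\mor(V,W)$ by conjugation through its action on $\HH$. The Thom space enrichment $\cJ^{\Sp}_n$ and the derivative formalism of Chapter 2 go through essentially verbatim; the central point to check is the analog of Proposition 2.1.5 identifying $\mor_{n+1}(V,W)$ with the reduced mapping cone of $\mor_n(V\oplus\RR,W)\wedge S^{n\rho_{V_4}}\to\mor_n(V,W)$. With this in hand, the polynomial-functor machinery of Chapter 3 transfers, and functors of the form $F(V)=\Omega^\infty[(S^{nV_\HH}\wedge\Th)_{h\Sp_n}]$, with $\Th$ a spectrum carrying a compatible $\Sp_n\rtimes V_4$-action, become $V_4$-homogeneous polynomial functors of degree $n$ by the same calculation as in Example 3.4.1.

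Next I would verify that the Mitchell-Richter-Crabb filtration makes sense $V_4$-equivariantly on $\Sp(V_\HH;W_\HH)$. Because $V_4$ acts on $\HH$ by algebra automorphisms, it carries quaternionic subspaces to quaternionic subspaces, so the loops $p_E(z)=z\pi_E+\pi_{E^{\perp}}$ assemble into $V_4$-invariant algebraic subvarieties $S_k(V_\HH)\subseteq\Gr_k^\HH(I^k V_\HH)$, with tautological bundle $\tau$ inheriting a compatible $V_4$-action. Setting $S^k(V_\HH;W_\HH):=\iota^*(S_k(V_\HH\oplus W_\HH))$ yields a $V_4$-equivariant filtration of $\Sp(V_\HH;W_\HH)$ whose associated graded pieces are the Thom spaces $S_k(V_\HH)^{\hom(\tau,W_\HH)}$; by the previous step these become $V_4$-homogeneous polynomial functors of degree $k$ after applying $Q$. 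A $V_4$-equivariant version of Theorem 5.1.1, whose proof carries over by replacing every $G$-fibration with a $V_4$-fibration, then gives
\[
\Sp(V_\HH;W_\HH)\;\simeq\;\bigvee_{n>0}S_n(V_\HH)^{\hom(\tau,W_\HH)}
\]
stably and $V_4$-equivariantly, which is the statement of the conjecture.

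The main obstacle I expect lies in two places where quaternionic noncommutativity bites. First, in adapting Proposition 2.1.5 and the Grassmannian identification $\xi\colon S_k(V_\HH)\to\cM_k(V_\HH)$ of Chapter 4: the iterative factorization by $p_{\ker\gamma_0}$ requires $\ker\gamma_0$ to be a quaternionic submodule, and one must check that $V_4$-invariance of a loop forces $V_4$-invariance of this canonical kernel as an $\HH$-submodule — an issue that ultimately reduces to the fact that $V_4$ acts on $\HH$ by $\RR$-algebra automorphisms, but needs to be verified compatibly with the Grassmannian structure. Second, in identifying the successive derivatives of the example functor with the homotopy-orbit filtration $F[n]\hookrightarrow F[n-1]\hookrightarrow\cdots$, one must track carefully how the $\Sp_n\rtimes V_4$-action interacts with the chain of inclusions $\Sp_{n-i}\hookrightarrow\Sp_n$ fixing coordinates; the clean representation-theoretic coincidences of the complex case (notably $\CC=\rho_{\ZZ/2}$) are replaced by $\HH=\rho_{V_4}$, and one must ensure that the homotopy-orbit manipulations in Example 3.4.1 still yield the correct $V_4$-homotopy fiber sequences over the larger group.
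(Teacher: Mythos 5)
You should note at the outset that the paper offers no proof of this statement: it appears only as a conjecture in the ``Future Questions'' chapter, so what follows assesses your sketch on its own terms. The first genuine gap is an object mismatch. The conjecture concerns the Stiefel manifold $\Sp(V_\HH;W_\HH)$ itself, the space of quaternionic isometries $V_\HH\to V_\HH\oplus W_\HH$, yet the filtration you feed into the splitting theorem --- the varieties $S_k$ generated by the loops $p_E(z)=z\pi_E+\pi_{E^\perp}$, with $S^k(V_\HH;W_\HH):=\iota^*(S_k(V_\HH\oplus W_\HH))$ --- is the Mitchell--Richter filtration of a (polynomial) \emph{mapping space}: in the paper these varieties filter $X(V;W)\simeq\Omega\U(V;W)$, never $\U(V;W)$ itself. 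Points of $S_k$ are loops, not isometries, so ``a $V_4$-equivariant filtration of $\Sp(V_\HH;W_\HH)$'' by these varieties does not parse. For the conjecture as stated, the natural input is instead the rank (Miller--Crabb) filtration of the Stiefel manifold, viewed as a functor of $W$, which is visibly $V_4$-invariant and whose subquotients are Thom spaces over $\Gr_k(V_\HH)$ of the required homogeneous shape $(X_k\wedge S^{k W_\HH})_{h\Sp_k}$; that is also what matches the clause ``Grassmannians of $V_\HH$'' in the conjecture.

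The second gap is that the step you defer as a noncommutativity ``verification'' is in fact the missing mathematics, and is presumably why the statement is left as a conjecture. Everything in Chapter 4 rests on the central Laurent variable $z$: the ring $\CC[z,z^{-1}]$, the module $I(V)=\bigoplus_{i>0}z^{-i}V$ with its $z$-action and the condition $zM\subseteq M$, the grading by the degree of $\det\gamma$, the approximation of smooth loops by polynomial ones (the Bauer lemma and its consequences), and the canonical factorization via $\ker\gamma_0$. If the ``circle'' becomes the unit quaternions $S^3$, the variable no longer commutes with the scalars or the matrix entries, there is no determinant or degree, no usable analogue of $\CC[z,z^{-1}]$ or of $I(V)$, and no known polynomial-approximation theorem for $\Map_*(S^3,\Sp_n)$; so asserting that $S_k(V_\HH)\subseteq\Gr_k(I^kV_\HH)$ exists as a $V_4$-variety with tautological bundle presupposes exactly the structure that has to be invented. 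Your representation-theoretic point that $V_4$ acts on $\HH$ by algebra automorphisms with $\HH\cong\rho_{V_4}$ is correct, and the transposition of Chapters 2--3 (including the homogeneity computation, which only needs $S(k\HH)\cong\Sp_k/\Sp_{k-1}$ in place of $S(k\CC)\cong\U_k/\U_{k-1}$) is plausible; but without either carrying out the rank-filtration route for the Stiefel manifold or constructing a genuinely new quaternionic algebraic model for $\Map_*(S^3,-)$, the proposal does not yet amount to a proof of the conjecture.
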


One could also consider the motivic homotopy theory analogue of any of these splitting theorems. 

\pagebreak




\singlespacing

\nocite{*}



\thispagestyle{plain}


\let\oldthebibliography=\thebibliography
\let\endoldthebibliography=\endthebibliography
\renewenvironment{thebibliography}[1]{
  \begin{oldthebibliography}{#1}
    \setlength{\itemsep}{3ex}
  }{
    \end{oldthebibliography}
  }

\bibliographystyle{abbrv}
\bibliography{biblio}

\begin{thebibliography}{10}

\bibitem{Arone}
G.~Arone.
\newblock The mitchell-richter filtration of loops on steifel manifolds stably
  splits.
\newblock {\em Proceedings of the American Mathematical Society}, 129(4):1207
  -- 1211, October 2000.

\bibitem{Bauer}
S.~Bauer and M.~Crabb.
\newblock Polynomial loops on spheres.
\newblock {\em Quarterly Journal of Mathematics}, 55(4):391--409, 2004.

\bibitem{Bohmann}
A.~M. Bohmann.
\newblock Basic notions of equivariant stable homotopy theory.

\bibitem{Crabb}
M.~Crabb.
\newblock On the stable splitting of {$\U(n)$} and {$\Omega \U(n)$}.
\newblock In {\em Algebraic Topology Barcelona 1986}, volume 1298 of {\em
  Lecture Notes in Math.}, pages 35 -- 53, Barcelona, April 1986. Springer.

\bibitem{Elliott}
C.~Elliott.
\newblock The affine grassmannian, March 2013.

\bibitem{Gaitsgory}
D.~Gaitsgory.
\newblock Affine grassmannian and the loop group.
\newblock Seminar Notes, October 2009.

\bibitem{Hatcher}
A.~Hatcher.
\newblock {\em Algebraic Topology}.
\newblock Cambridge University Press, Cambridge, 2002.

\bibitem{Hopkins}
M.~J. Hopkins.
\newblock Equivariant stable homotopy theory.

\bibitem{Hopkins3}
M.~J. Hopkins.
\newblock Minicourse lecture iii (kervaire invariant).
\newblock In {\em 19th Brazilian Toplogy Meeting}, S{\~a}o Paulo, August 2014.

\bibitem{Hopkins4}
M.~J. Hopkins.
\newblock Minicourse lecture iv (kervaire invariant).
\newblock In {\em 19th Brazilian Toplogy Meeting}, S{\~a}o Paulo, August 2014.

\bibitem{Loring}
T.~A. Loring.
\newblock Factorization of matrices of quaternions.

\bibitem{Malkiewich}
C.~Malkiewich.
\newblock The stable homotopy category.

\bibitem{Miller}
H.~Miller.
\newblock Stable splittings of stiefel manifolds.
\newblock {\em Topology}, 24(4):411--419, 1985.

\bibitem{Mitchell}
S.~Mitchell.
\newblock The bott filtration of a loop group.
\newblock In {\em Algebraic Topology Barcelona 1986}, volume 1298 of {\em
  Lecture Notes in Math.}, pages 215--226, Barcelona, April 1986. Springer.

\bibitem{Pressley}
A.~Pressley and G.~Segal.
\newblock {\em Loop Groups}.
\newblock Oxford University Press, Oxford, 1988.

\bibitem{Schwede}
S.~Schwede.
\newblock Lectures on equivariant stable homotopy theory.

\bibitem{Weiss}
M.~Weiss.
\newblock Orthogonal calculus.
\newblock {\em Trans. Amer. Math. Soc.}, 347(10):3743--3796, 1995.

\end{thebibliography}




\end{document}